\numberwithin{equation}{section}
\newtheorem{thm}{Theorem}[section]
\newtheorem{prop}[thm]{Proposition}
\newtheorem{lem}[thm]{Lemma}
\newtheorem{cor}[thm]{Corollary}
\theoremstyle{remark}
\newtheorem{rem}[thm]{Remark}
\theoremstyle{definition}
\newcommand*{\rom}[1]{\expandafter\@slowromancap\romannumeral #1@} 
\renewcommand{\phi}{\varphi} 
\newcommand{\diff}{\mbox{d}} 
\newcommand{\R}{\mathbb{R}} 
\newcommand{\nooutput}[1]{}
\begin{document}

\title[Strong Existence and Higher Order Differentiability
of FBM driven SDE's]{Strong Existence and Higher Order Fr\'{e}chet Differentiability
of Stochastic Flows of Fractional Brownian Motion driven SDEs with Singular
Drift}

\date{\today}

\author[D. Ba\~{n}os]{David Ba\~{n}os}
\address[David Ba\~{n}os]{\\
Department of Mathematics \\
University of Oslo\\
P.O. Box 1053, Blindern\\
N--0316 Oslo, Norway}
\email[]{davidru@math.uio.no}
\author[T. Nilssen]{Torstein Nilssen}
\address{T. Nilssen: CMA, Department of Mathematics, University of Oslo, Moltke Moes vei 35, P.O. Box 1053 Blindern, 0316 Oslo, Norway. Funded by the Norwegian Research Council (Project
230448/F20) }
\email{torsteka@math.uio.no}
\author[F. Proske]{Frank Proske}
\address{F. Proske: CMA, Department of Mathematics, University of Oslo, Moltke Moes vei 35, P.O. Box 1053 Blindern, 0316 Oslo, Norway.}
\email{proske@math.uio.no} 
 
\keywords{SDEs, Compactness criterion, irregular drift, Malliavin calculus, stochastic flows, Sobolev derivative.}
\subjclass[2010]{60H10, 49N60}


\begin{abstract}
In this paper we present a new method for the construction of strong
solutions of SDE's with merely integrable drift coefficients driven by a
multidimensional fractional Brownian motion with Hurst parameter $H<\frac{1}{%
2}.$ Furthermore, we prove the rather surprising result of the higher order
Fr\'{e}chet differentiability of stochastic flows of such SDE's in the case
of a small Hurst parameter. In establishing these results we use techniques
from Malliavin calculus combined with new ideas based on a "local time
variational calculus". We expect that our general approach can be also
applied to the study of certain types of stochastic partial differential
equations as e.g. stochastic conservation laws driven by rough paths.
\end{abstract}

\maketitle

\section{Introduction}

Consider a fractional Brownian motion $B_{t}^{H}$, $t\geq 0$
with Hurst parameter $H\in (0,1)$ on a probability space $\left( \Omega ,%
\mathfrak{A},P \right) ,$ that is a centered Gaussian process with a
covariance structure $R_{H}(t,s)$ given by%
\begin{equation*}
R_{H}(t,s)= \textrm{ E}[B_{t}^{H}B_{s}^{H}]=\frac{1}{2} \left(s^{2H}+t^{2H}-\left\vert
t-s\right\vert ^{2H} \right)
\end{equation*}%
for all $t,s\geq 0$. The fractional Brownian motion, which is a Brownian
motion in the case $H=\frac{1}{2}$, enjoys the property of self-similarity,
that is 
\begin{equation*}
\{B_{\alpha t}^{H}\}_{t\geq 0}\overset{law}{=}\{\alpha
^{H}B_{t}^{H}\}_{t\geq 0}
\end{equation*}%
for all $\alpha >0$. In fact the fractional Brownian motion, which has a
version with $H-\varepsilon$-H\"{o}lder continuous paths for every $\varepsilon\in (0,H)$, is the only stationary Gaussian
process satisfying the latter property. On the other hand this process is
neither a Markov process nor a (weak) semimartingale and it is a very
irregular process in the sense of rough paths for small Hurst parameters.
See e.g. \cite{Nua10} and the references therein for more information about fractional
Brownian motion.

In this article we aim at analysing solutions $X^{x}$ of the
stochastic differential equation (SDE)%
\begin{equation}
X_{t}^{x}=x+\int_{0}^{t}b(s,X_{s}^{x})ds+B_{t}^{H}, \quad 0\leq t\leq T, \quad x\in \mathbb{R%
}^{d},  \label{VI_SDE}
\end{equation}%
where $B^{H}$ is a $d$-dimensional fractional Brownian motion, whose components are one-dimensional independent fractional Brownian motions as defined above, with Hurst parameter $H\in (0,\frac{1}{2})$ with respect to a $P$-augmented
filtration $\mathcal{F}=\{\mathcal{F}_{t}\}_{0\leq t\leq T}$ generated by $B^{H}$ and where $b:[0,T]\times \mathbb{R}^{d}\longrightarrow \mathbb{R}^{d}$ is a
Borel-measurable function.

If we impose a global Lipschitz and a linear growth condition uniformly in time on the drift
coefficient $b$ in (\ref{VI_SDE}), we can use the Picard iteration scheme to
obtain a unique global strong solution to the SDE (\ref{VI_SDE}), that is a $\mathcal{F}-$adapted solution $X_{t}^{x}$ to (\ref{VI_SDE}), which is a
measurable $L^{2}(\Omega)$-functional of the driving noise.

However, a variety of important applications of such SDE's to stochastic
control theory (in the case of $H=\frac{1}{2}$) (see \cite{Kry80}) or to
the statistical mechanics of infinite particle systems (see \cite{KR05}) show that the use of SDE's with regular coefficients in the sense of
Lipschitzianity as models for random phenomena is not suitable and that one
is forced to study such equations with coefficients which are irregular,
that is discontinuous or merely measurable.

One objective of our paper is the construction of unique strong solutions to
the SDE (\ref{VI_SDE}) driven by rough paths in the case of multidimensional
fractional noise $B^{H}$ for Hurst parameters $H<\frac{1}{2}$
and drift coefficients%
\begin{align}\label{L1inf}
b\in  L^1(\R^d; L^\infty([0,T],\R^d)) \cap L^\infty(\R^d ; L^\infty ([0,T],\R^d)).
\end{align}
In proving this new result, we employ tools from Malliavin Calculus and
local time techniques.

The analysis of strong solutions to (\ref{VI_SDE}) has been a very active field
of research in various branches of mathematics over the last decades. A
foundational result in this direction of research was first obtained by
Zvonkin in the beginning of the 1970ties \cite{Zvon74}, who showed the existence of a unique strong solution of one-dimensional Brownian motion driven SDE's (\ref{VI_SDE}), when the drift coefficient $b$ is merely bounded and measurable. A few years later on, the latter result was generalised by Veretennikov \cite{Ver79} to the multidimensional case.

More recently, Krylov and R\"{o}ckner \cite{KR05} gave the construction of unique
strong solutions to (\ref{VI_SDE}) under integrability conditions on the
(time-inhomogeneous) drift coefficient $b$. See also the articles \cite{GyM01}
or \cite{GyK96}. In this context, we shall also mention the
generalization of Zvonkin's result to the case of stochastic evolution
equations in Hilbert spaces with bounded and measurable drift coefficients
\cite{DPFPR13}, where the authors use solutions to infinite-dimensional Kolmogorov equations to recast the singular drift term of the evolution equation in terms of a more regular expression ("It\^{o}-Tanaka-Zvonkin trick").

In all of the above mentioned works the common technique of the authors for
the construction of strong solutions rests on the so-called Yamada-Watanabe
principle (see \cite{YW71}), which entails strong uniqueness of solutions to SDE's, if pathwise uniqueness of (weak) solutions holds.

In fact, in order to ensure strong uniqueness of solutions, the above
authors construct weak solutions to SDE's, which are not necessarily
Brownian functionals, by means e.g. of \cite{GyK96},\cite{GyM01}, Skorokhod embedding combined with Krylov's estimates and verify pathwise uniqueness by using solutions of parabolic partial differential equations (see e.g. \cite{Zvon74}, \cite{Ver79} or \cite{KR05}).

We remark that the techniques of these authors for proving pathwise
uniqueness are not applicable to SDE's driven by fractional Brownian motion,
since the fractional Brownian is neither a Markov process nor a
semimartingale for Hurst parameters $H\neq \frac{1}{2}$.

Further, we emphasise that our method, which is not only limited to Markov
or semimartingale solutions of SDE's, gives a direct construction of strong
solutions and provides a construction principle, which can be considered the
converse to that of Yamada-Watanabe: We prove the existence of strong
solutions and uniqueness in law to guarantee strong uniqueness.

The SDE (\ref{VI_SDE}) for fractional Brownian initial noise has been already studied by various authors in the literature:

The case $d=1$ for Hurst parameters $H\in (0,1)$ was treated in \cite{nualart.ouknine.03}, where the authors prove
strong uniqueness for linear growth drift in the case $H<\frac{1}{2}$ by invoking a method based on the comparison theorem. See also \cite{nualart.ouknine.02}.

Let us also mention the recent work of Catellier, Gubinelli \cite[2016]{CG}, which in fact came to our attention,
after the first draf (2015). In their striking paper, which extends the results of Davie \cite{Davie} to the case of a fractional Brownian noise, the authors study the problem, which fractional Brownian paths actually regularize solutions to the SDE (\ref{VI_SDE}) for $H\in (0,1)$. The (unique) solutions constructed in \cite{CG} are \emph{path by path} with respect to time-dependent vector fields $b$ in the Besov-H\"{o}lder space $B_{\infty,\infty }^{\alpha +1}$, $\alpha \in \mathbb{R}$, where in the distributional case the drift term of the SDE
is given by a non-linear Young type of integral based on an averaging operator. In proving existence and uniqueness results the authors use the Leray-Schauder-Tychonoff fixed point theorem and a comparison principle in connection with an average translation operator. Further, Lipschitz-regularity of solutions with respect to initial values under certain conditions is shown. In this context, we also refer to the PhD thesis of Catellier \cite{Catellier} , where the
author e.g. constructed (weak controlled) solutions to rough transport equations for vector fields $b$ statisfying a linear growth condition and $%
divb\in L^{\infty }([0,T]\times \mathbb{R}^{d})$ by using rough path theory.
Further, it is worth mentioning the paper of Chouk, Gubinelli \cite{Chouk1}. Here the authors analyze modulated non-linear Schr\"{o}%
dinger equations and improve well-posedness of such equations by means of the irregularity of the modulation. Their methods rest on rough path theory and an extension of Strichartz estimates to the case of Brownian modulation. See also \cite{Chouk2} in connection with the Korteweg-de Vries equation.

Finally, we refer to other recent works by Hu, Khoa, Mytnik \cite{Mytnik1}, which pertains to the study of the Brox
diffusion, and Butkovski, Mytnik \cite{Mytnik2}, where the authors obtain results on the regularization by (space
time white) noise of solutions to a non-Lipschitz stochastic heat equation and the associated flow. Moreover, path by path unique solutions in the sense of Davie \cite{Davie} are shown.

The techniques used in our paper are based on Malliavin calculus and are very different from those in the above mentioned papers- in spite of some (first impression) similarities regarding our estimates in Prop. 3.3 and 3.4 to the article of Davie \cite{Davie}, which is limited to the case of Brownian motion and whose approach doesn't carry over to our situation.  Further, the existence and uniqueness results for strong solutions to (\ref{VI_SDE}) for all (multidimensional)
vector fields $b$ as in \eqref{L1inf} established in this paper are not covered by the work \cite{CG}. Moreover, our method- and this
is a characteristic feature of our article- allows for the proof of higher order differentiability of stochastic flows associated with such solutions, provided the Hurst parameter is small enough.

\bigskip

Another crucial objective of our article is the study of the regularity of
stochastic flows of the SDE (\ref{VI_SDE}), that is the regularity of 
\begin{equation*}
(x\longmapsto X_{t}^{x})
\end{equation*}%
in the initial condition $x\in \mathbb{R}^{d},$ when the vector field $b$ is
discontinuous.

The motivation for this study comes from the deterministic case:%
\begin{equation}
\frac{d}{dt}X_{t}^{x}=u(t,X_{t}^{x}), \quad t\geq 0, \quad X_{0}^{x}=x,  \label{VI_Classical}
\end{equation}%
where $u:[0,\infty )\times \mathbb{R}^{d}\longrightarrow \mathbb{R}^{d}$ is
a vector field. Here, the solution $X:[0,\infty )\times \mathbb{R}%
^{d}\longrightarrow \mathbb{R}^{d}$ to (\ref{VI_Classical}) may e.g. stand for
the flow of fluid particles with respect to the velocity field of an
incompressible inviscid fluid whose dynamics is described by an
incompressible Euler equation%
\begin{equation}
u_{t}+(Du)u+\triangledown P=0,\quad \nabla \cdot u=0,  \label{VI_Euler}
\end{equation}%
where $P:[0,\infty )\times \mathbb{R}^{d}\longrightarrow \mathbb{R}$ is the
pressure field.

Solutions of (\ref{VI_Euler}) may be singular. Therefore a better understanding of the regularity of solutions of equation (\ref{VI_Euler}) requires the study of flows of ODE's (\ref{VI_Classical}) driven by irregular vector fields.

If $u$ is Lipschitz continuous it is well-known that the unique flow $%
X:[0,\infty )\times \mathbb{R}^{d}\longrightarrow \mathbb{R}^{d}$ in (\ref{VI_Classical}) is Lipschitzian. The latter classical result was generalized by Di Perna and Lions in their celebrated paper \cite{DPL89} to the case $u\in L^1([0,T]; W_{loc}^{1,p})$ and $\nabla\cdot u\in L^1([0,T];L^{\infty})$, for which the authors construct a unique generalized flow $X$
to (\ref{VI_Classical}). Later on the latter result was extended by Ambrosio \cite{ambrosio.04} to the case of vector fields of bounded
variation.

However, it turns out that the superposition of the ODE (\ref{VI_Classical}) by
a Brownian noise $B$, that is
\begin{equation}
dX_{t}=u(t,X_{t})dt+dB_{t}, \quad s,t\geq 0, \quad X_{s}=x\in \mathbb{R}^{d}
\label{VI_SDEBrownian}
\end{equation}%
has a strong regularising effect on its flow $\mathbb{R}^{d}\ni x\longmapsto
\phi _{s,t}(x)\in \mathbb{R}^{d}.$ 

Using techniques similar to those in this paper, but without arguments based
on local time, it was shown in Mohammed, Nilssen, Proske \cite{MNP14} for merely \emph{bounded measurable} drift coefficients $u$ that $\phi _{s,t}$ is a stochastic flow of Sobolev diffeomorphisms with  
\begin{equation*}
\phi _{s,t}(\cdot ), \phi _{s,t}^{-1}(\cdot )\in L^{2}(\Omega
,W^{1,p}(\mathbb{R}^{d};w))
\end{equation*}%
for all $s,t$ and $p\in (1,\infty )$, where $W^{1,p}(\mathbb{R}^{d};w)$ is a
weighted Sobolev space with weight function $w:\mathbb{R}^{d}\longrightarrow
\lbrack 0,\infty )$.

As an application of this result the authors constructed Sobolev
differentiable unique (weak) solutions of the (Stratonovich) stochastic
transport equation with multiplicative noise of the form%
\begin{equation*}
\left\{ 
\begin{array}{l}
d_{t}v(t,x)+(u(t,x)\cdot Dv(t,x))dt+\sum_{i=1}^{d}e_{i}\cdot Dv(t,x)\circ
dB_{t}^{i}=0 \\ 
u(0,x)=u_{0}(x),%
\end{array}%
\right. 
\end{equation*}%
where $u$ is bounded and measurable, $u_{0}\in C_{b}^{1}$ and where $\left\{
e_{i}\right\} _{i=1}^{d}$ is a basis of $\mathbb{R}^{d}$.

By adopting ideas in Mohammed et al. \cite{MNP14}, we mention that the latter result on the existence of stochastic flows of Sobolev diffeomorphisms was extended in \cite{Rez.14} to the case of globally integrable $u\in L^{r,q}$ for $r/d+2/q<1$ ($r$ for the spatial variable and $q$ for the temporal variable) and applied to the study of the regularity of solutions to Navier-Stokes-equations. Compare also to \cite{FedFlan.13}, where the authors employ techniques based on solutions of backward Kolmogorov equations.

\bigskip 

If the Brownian motion in (\ref{VI_SDEBrownian}) is replaced by a rougher noise
given by $B^{H}$ for $H<\frac{1}{2}$, we find in this paper for $u\in L^1(\R^d; L^\infty([0,T],\R^d)) \cap L^\infty(\R^d ; L^\infty ([0,T],\R^d))$ the rather surprising result which generalises the classical result of Kunita \cite{Kunita} for smooth coefficients, that the
stochastic flow $X:[0,\infty )\times \mathbb{R}^{d}\longrightarrow \mathbb{R}%
^{d}$ is higher order Fr\'{e}chet differentiable in the spatial variable,
that is%
\begin{equation*}
(x\longmapsto X_{t}^{x}(\omega ))\in C^{k}(\mathbb{R}^{d})
\end{equation*}%
a.s. for all $t$ and for $k\geq 1$, provided $H=H(k)$ is small enough.

In view of the above discussion in the case of Brownian noise driven
stochastic flows, the latter result raises the fundamental question whether
rough noise in the sense of $B_{\cdot }^{H}$ or a related noise with very
irregular path behaviour may considerably regularise solutions of PDE's as
e.g. transport equations, conservation laws or even Navier-Stokes equations
by perturbation. We are confident that there is an affirmative answer for a
class of interesting PDE's.

\bigskip 

Finally, we comment on that the method for the construction of higher order
Fr\'{e}chet differentiable stochastic flows of (\ref{VI_SDE}), which is- as
mentioned above- different from common techniques based on Markov processes
and semimartingales, is inspired by the works \cite{MBP10}, \cite{MMNPZ10},
\cite{MNP14}, \cite{HaaPros.14} in the case of (\ref{VI_SDE}) with
initial L\'{e}vy noise and \cite{FNP.13}, \cite{nilssen.14} in the case of stochastic partial differential equations.

More precisely, in order to construct strong solutions to (\ref{VI_SDE}) we
apply a compactness criterion for square integrable Brownian functionals from
\cite{DPMN92} to solutions $X_{t}^{n}$ of%
\begin{equation*}
dX_{t}^{n}=b_{n}(t,X_{t}^{n})dt+dB_{t}^{H},
\end{equation*}%
where $b_{n},n\geq 0$ are smooth coefficients converging to $b$ in $L^1(\R^d; L^\infty([0,T],\R^d)) \cap L^\infty(\R^d ; L^\infty ([0,T],\R^d))$ and show that $X_{t}^{n}$ converges to a solution $X_{t}$ of (\ref{VI_SDE}) in $L^{2}(\Omega)$ for all $t$. 

If, for a moment, we assume that $b$ is time-homogeneous, then in proving the existence and the higher order Fr\'{e}chet differentiability of the corresponding stochastic flow we make use of a "local time variational calculus" argument of the form%
\begin{align}\label{localintro}
\int_{\Delta _{\theta ,t}^{m}}\varkappa (s)D^{\alpha }f(B_{s}^{H})ds=\int_{(\mathbb{R}^{d})^m}D^{\alpha }f(z)L_{\varkappa }(t,z)dz=(-1)^{\left\vert \alpha
\right\vert }\int_{(\mathbb{R}^{d})^m}f(z)D^{\alpha }L_{\varkappa }(t,z)dz,
\end{align}
for $B_s^H=(B_{s_1}^H,\dots, B_{s_m}^H)$ and smooth functions $f:(\mathbb{R}^{d})^m\longrightarrow \mathbb{R}$, where $%
L_{\varkappa }(t,z)$ is a spatially differentiable local time on the simplex $%
\Delta _{\theta ,t}^{m}=\{(s_{1},...,s_{m})\in \lbrack 0,T]^{m}:\theta
<s_{1}<...<s_{m}<t\}$, scaled by a function $\varkappa(s_1,\dots,s_m)$ ($D^{\alpha }$ is the partial
derivative of order $\left\vert \alpha \right\vert $). Actually, we generalise the above argument to time dependent smooth functions $f:[0,T]^m\times (\R^{d})^m\rightarrow \R$ and hence the intuition of the above "local time" argument is somehow not tangible any longer. In other words, we show that there exists a well-defined object $\Lambda_\alpha^f (\theta,t,z)$ in $L^2(\Omega)$ the size of which can be estimated by means of a norm of $f$ and not by its derivative such that the following integration by parts formula holds true
\begin{align}\label{localintro2}
\int_{\Delta _{\theta ,t}^{m}}D^{\alpha }f(s,B_{s}^{H})ds=\int_{(\mathbb{R}^{d})^m} \Lambda_\alpha^f (\theta,t,z) dz, \quad P-a.s.
\end{align}
where the above formula coincides with \eqref{localintro} for time-homogeneous functions.

\bigskip 

We expect that our approach can be also applied to the study of solutions of
the following stochastic equations:

\bigskip 

\begin{equation*}
dX_{t}=(AX_{t}+b(X_{t}))dt+Qd\textbf{W}_{t},
\end{equation*}%
for (mild) solutions $X_{t},$ where $A$ is a densely defined linear operator
(of parabolic type) on a separable Hilbert space $H$, $b:H\longrightarrow H$
is an irregular function, $Q$ a Hilbert-Schmidt operator and $\textbf{W}$ a (non-H\"{o}lder continuous) "cylindrical" Gaussian noise.

\bigskip 

On the other hand, using our method we may also examine equations of the type
\begin{equation*}
dX_{t}=dA_{t}+dB_{t}^{H},
\end{equation*}%
where $A_{t}$ is a process of bounded variation which arises from limits of
the form
$$\displaystyle \lim_{n\to \infty }\int_{0}^{t}b_{n}(X_{s})ds$$
for coefficients $b_{n},n\geq 0$. See \cite{BC.03} in the Brownian case and the works \cite{BLPP}, \cite{ABP}.

\bigskip 

Our paper is organised as follows: In Section 2 we introduce the
mathematical framework of the article and define in Section 3 the random field $\Lambda_{\alpha}^f$ of \eqref{localintro2}, which we show to be high-order differentiable in the spatial variable for small Hurst
parameters. In Section 4 we establish the existence of a unique strong
solution to the SDE (\ref{VI_SDE}) under integrability conditions on the drift
coefficient $b$. Section 5 is devoted to the study of the regularity
properties of stochastic flows of (\ref{VI_SDE}).   

%

\section{Framework}\label{VI_Frame}
In this section we recollect some specifics on fractional calculus, fractional Brownian noise and occupation measures which will be extensively used throughout the article. The reader might consult \cite{Mall97}, \cite{Mall78} or \cite{DOP08} for a general theory on Malliavin calculus for Brownian motion and \cite[Chapter 5]{Nua10} for fractional Brownian motion. Whereas for occupation measures one may review \cite{geman.horo.80} or \cite{Kar98}. We present the results in one dimension for simplicity inasmuch as we will treat the multidimensional case componentwise.

\subsection{Fractional calculus}\label{VI_fraccal}
We establish here some basic definitions and properties on fractional calculus. A general theory on this subject may be found in \cite{samko.et.al.93} and \cite{lizorkin.01}.

Let $a,b\in \R$ with $a<b$. Let $f\in L^p([a,b])$ with $p\geq 1$ and $\alpha>0$. Define the \emph{left-} and \emph{right-sided Riemann-Liouville fractional integrals} by
$$I_{a^+}^\alpha f(x) = \frac{1}{\Gamma (\alpha)} \int_a^x (x-y)^{\alpha-1}f(y)dy$$
and
$$I_{b^-}^\alpha f(x) = \frac{1}{\Gamma (\alpha)} \int_x^b (y-x)^{\alpha-1}f(y)dy$$
for almost all $x\in [a,b]$ where $\Gamma$ is the Gamma function.

Moreover, for a given integer $p\geq 1$, let $I_{a^+}^{\alpha} (L^p)$ (resp. $I_{b^-}^{\alpha} (L^p)$) denote the image of $L^p([a,b])$ by the operator $I_{a^+}^\alpha$ (resp. $I_{b^-}^\alpha$). If $f\in I_{a^+}^{\alpha} (L^p)$ (resp. $f\in I_{b^-}^{\alpha} (L^p)$) and $0<\alpha<1$ then define the \emph{left-} and \emph{right-sided Riemann-Liouville fractional derivatives} by
$$D_{a^+}^{\alpha} f(x)= \frac{1}{\Gamma (1-\alpha)} \frac{\diff}{\diff x} \int_a^x \frac{f(y)}{(x-y)^{\alpha}}dy$$
and
$$D_{b^-}^{\alpha} f(x)= \frac{1}{\Gamma (1-\alpha)} \frac{\diff}{\diff x} \int_x^b \frac{f(y)}{(y-x)^{\alpha}}dy.$$

The left- and right-sided derivatives of $f$ defined above have the following representations
$$D_{a^+}^{\alpha} f(x)= \frac{1}{\Gamma (1-\alpha)} \left(\frac{f(x)}{(x-a)^\alpha}+\alpha\int_a^x \frac{f(x)-f(y)}{(x-y)^{\alpha+1}}dy\right)$$
and
$$D_{b^-}^{\alpha} f(x)= \frac{1}{\Gamma (1-\alpha)} \left(\frac{f(x)}{(b-x)^\alpha}+\alpha\int_x^b \frac{f(x)-f(y)}{(y-x)^{\alpha+1}}dy\right).$$

Finally, observe that by construction, the following formulas hold
$$I_{a^+}^\alpha (D_{a^+}^{\alpha} f) = f$$
for all $f\in I_{a^+}^{\alpha} (L^p)$ and
$$D_{a^+}^{\alpha}(I_{a^+}^\alpha  f) = f$$
for all $f\in L^p([a,b])$ and similarly for $I_{b^-}^{\alpha}$ and $D_{b^-}^{\alpha}$.

\subsection{Shuffles}\label{VI_shuffles}
Let $m$ and $n$ be integers. We define $S(m,n)$ as the set of \emph{shuffle permutations}, i.e. the set of permutations $\sigma: \{1, \dots, m+n\} \rightarrow \{1, \dots, m+n\}$ such that $\sigma(1) < \dots < \sigma(m)$ and $\sigma(m+1) < \dots < \sigma(m+n)$.

We define the $m$-dimensional simplex for $0\leq \theta < t \leq T$,
$$
\Delta_{\theta,t}^m := \{(s_m,\dots,s_1)\in [0,T]^m : \, \theta<s_m<\cdots < s_1<t\}.
$$
The product of two simplices can be written as the following union
$$
\Delta_{\theta,t}^m \times \Delta_{\theta,t}^n = \mbox{\footnotesize $\bigcup_{\sigma \in S(m,n)} \{(w_{m+n},\dots,w_1)\in [0,T]^{m+n} : \, \theta< w_{\sigma(m+n)} <\cdots < w_{\sigma(1)} <t\} \cup \mathcal{N}$ \normalsize},
$$
where the set $\mathcal{N}$ has null Lebesgue measure. In this way, if $f_i:[0,T] \rightarrow \R$, $i=1,\dots,m+n$ are integrable functions we have
\begin{align} \label{shuffleIntegral}
\int_{\Delta_{\theta,t}^m} \prod_{j=1}^m f_j(s_j) ds_m \dots ds_1 & \int_{\Delta_{\theta,t}^n} \prod_{j=m+1}^{m+n} f_j(s_j) ds_{m+n} \dots ds_{m+1}  \notag \\
&= \sum_{\sigma\in S(m,n)} \int_{\Delta_{\theta,t}^{m+n}} \prod_{j=1}^{m+n} f_{\sigma(j)} (w_j) dw_{m+n}\cdots dw_1. 
\end{align}

We can generalize the above technical lemma, the use of which shall be clear in Section \ref{Flow}. The reader may skip this lemma and proof until Section \ref{Flow}.

\begin{lem}\label{partialshuffle}
Let $n,p$ and $k$ be non-negative integers, $k \leq n$. Assume we have integrable functions $f_j : [0,T] \rightarrow \R$, $j = 1, \dots, n$ and $g_i : [0,T] \rightarrow \R$, $i = 1, \dots, p$. We may then write
\begin{align*}
\int_{\Delta_{\theta,t}^n} f_1(s_1) \dots f_k(s_k) \int_{\Delta_{\theta, s_k}^p} g_1(r_1) \dots g_p(r_p) dr_p \dots dr_1 f_{k+1}(s_{k+1}) \dots f_n(s_n) ds_n \dots ds_1 \\
= \sum_{\sigma \in A_{n,p}} \int_{\Delta_{\theta,t}^{n+p}} h^{\sigma}_1(w_1) \dots h^{\sigma}_{n+p}(w_{n+p}) dw_{n+p} \dots dw_1,
\end{align*}
where $h^{\sigma}_l \in \{ f_j, g_i : 1 \leq j \leq n, 1 \leq i \leq p\}$. Above $A_{n,p}$ denotes a subset of permutations of $\{1, \dots, n+p\}$ such that $\# A_{n,p} \leq C^{n+p}$ for an appropriate constant $C \geq 1$, and we have defined $s_0 = \theta$.
\end{lem}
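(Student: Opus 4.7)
The plan is to reduce the nested integral to a pair of simplex integrals sharing the common upper limit $s_k$, apply the basic shuffle identity \eqref{shuffleIntegral} to combine them, and then reassemble everything into a single simplex integral on $\Delta_{\theta,t}^{n+p}$. First I would use Fubini to pull the factor $f_k(s_k)$ out to the $ds_k$-slice. Since $s_k$ is simultaneously smaller than $s_1,\ldots,s_{k-1}$ and larger than $s_{k+1},\ldots,s_n$, the outer simplex $\Delta_{\theta,t}^n$ factors, conditionally on $s_k$, as $\Delta_{s_k,t}^{k-1} \times \Delta_{\theta,s_k}^{n-k}$, and the inner $r$-integration already lives on $\Delta_{\theta,s_k}^{p}$.

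For each fixed $s_k$, the two factors supported on $[\theta,s_k]$, namely $\int_{\Delta_{\theta,s_k}^{n-k}} \prod_{j=k+1}^n f_j(s_j)\,ds_n\cdots ds_{k+1}$ and $\int_{\Delta_{\theta,s_k}^{p}} \prod_{i=1}^p g_i(r_i)\,dr_p\cdots dr_1$, are in precisely the form to which \eqref{shuffleIntegral} applies. This produces a sum over $\sigma \in S(n-k,p)$ of integrals of the shuffled product over $\Delta_{\theta,s_k}^{n-k+p}$. Plugging back in and merging the top simplex $\Delta_{s_k,t}^{k-1}$, the slice at $s_k$, and the shuffled bottom simplex $\Delta_{\theta,s_k}^{n-k+p}$ produces an $(n+p)$-dimensional domain
\begin{equation*}
\{\theta < w_{n+p-k} < \cdots < w_1 < s_k < s_{k-1} < \cdots < s_1 < t\},
\end{equation*}
which is precisely $\Delta_{\theta,t}^{n+p}$ after relabelling the $k + (n-k+p) = n+p$ variables in decreasing order. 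The resulting integrand is a product $h_1^{\sigma}(u_1)\cdots h_{n+p}^{\sigma}(u_{n+p})$ whose first $k$ factors are fixed as $f_1,\ldots,f_k$ while the remaining $n+p-k$ factors are drawn from $\{f_{k+1},\ldots,f_n\}\cup\{g_1,\ldots,g_p\}$ according to $\sigma$, so $h_l^{\sigma}\in\{f_j,g_i\}$ as required.

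The cardinality bound is then immediate: $\#A_{n,p}\leq\#S(n-k,p)=\binom{n-k+p}{p}\leq 2^{n-k+p}\leq 2^{n+p}$, so one may take $C=2$. I do not anticipate any serious analytic obstacle; the whole argument is bookkeeping layered on top of \eqref{shuffleIntegral} and Fubini. The only delicate points are handling the degenerate cases $k=1$ and $k=n$ (where one of the three simplex factors collapses, using the convention $s_0=\theta$), confirming that the null set on which the shuffle decomposition fails remains negligible after the $ds_k$-integration, and correctly tracking which element of $\{f_j,g_i\}$ each $h_l^{\sigma}$ corresponds to under the relabelling.
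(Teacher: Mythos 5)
Your proposal is correct, and it reaches the conclusion by a genuinely different route from the paper. The paper proves the lemma by induction on $n$: it peels off the outermost (largest) variable $s_1$ one step at a time, reduces to the case $k=1$ where the inner $g$-integral sits at the top level, and only there invokes the shuffle identity \eqref{shuffleIntegral}; the index set $A_{n,p}$ is then built recursively (permutations fixing $1$ and extending a permutation from the previous step), and its cardinality is only estimated along the way. You instead factor the simplex directly at the level of $s_k$, writing $\Delta_{\theta,t}^n$ (conditionally on $s_k$, up to a null set) as $\Delta_{s_k,t}^{k-1}\times\{s_k\}\times\Delta_{\theta,s_k}^{n-k}$, apply \eqref{shuffleIntegral} exactly once to the two blocks supported on $[\theta,s_k]$, and reassemble. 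Both arguments rest on the same key identity, but yours is non-inductive and buys an explicit identification $A_{n,p}\cong S(n-k,p)$ with the exact count $\binom{n-k+p}{p}$, which is sharper than the paper's recursive bookkeeping; what the paper's induction buys in exchange is that it never has to discuss the a.e. product decomposition of the simplex conditional on an interior variable, since each step only detaches the outermost coordinate. The delicate points you flag (the degenerate cases $k=1$, $k=n$, and the null set in the shuffle decomposition surviving the $ds_k$-integration) are real but harmless for integrable $f_j$, $g_i$ by Fubini--Tonelli, so there is no gap.
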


\begin{proof}
The result is proved by induction on $n$. For $n=1$ and $k=0$ the result is trivial. For $k=1$ we have
\begin{align*}
\int_{\theta}^t f_1(s_1) \int_{\Delta_{\theta,s_1}^p} g_1(r_1) \dots g_p(r_p) & dr_p \dots dr_1 ds_1 \\
 &  = \int_{\Delta_{\theta,t}^{p+1}} f_1(w_1) g_1(w_2) \dots g_p(w_{p+1})  dw_{p+1} \dots dw_1,
\end{align*}
where we have put $w_1 =s_1, w_2 =  r_1, \dots, w_{p+1} = r_p$.

Assume the result holds for $n$ and let us show that this implies that the result is true for $n+1$. Either $k=0,1$ or $2 \leq k \leq n+1$. For $k=0$ the result is trivial. For $k=1$ we have
\begin{align*}
\int_{\Delta_{\theta,t}^{n+1}} & f_1(s_1) \int_{\Delta_{\theta,s_1}^p} g_1(r_1) \dots g_p(r_p) dr_p \dots dr_1 f_2(s_2) \dots  f_{n+1}(s_{n+1}) ds_{n+1} \dots ds_1 \\
&= \int_{\theta}^t f_1(s_1) \left( \int_{\Delta_{\theta,s_1}^n} \int_{\Delta_{\theta,s_1}^p} g_1(r_1) \dots g_p(r_p) dr_p \dots dr_1 f_2(s_2) \dots  f_{n+1}(s_{n+1}) ds_{n+1} \dots ds_2 \right) ds_1 .
\end{align*}
The result follows from (\ref{shuffleIntegral}) coupled with $ \# S(n,p) = \frac{(n+p)!}{n! p!} \leq C^{n+p} \leq C^{(n+1) + p}$. For $k \geq 2$ we have from the induction hypothesis
\begin{align*}
\int_{\Delta_{\theta,t}^{n+1}}   f_1(s_1) \dots f_k(s_k) \int_{\Delta_{\theta, s_k}^p}  g_1(r_1) \dots g_p(r_p) & dr_p \dots dr_1 f_{k+1}(s_{k+1}) \dots f_{n+1}(s_{n+1}) ds_{n+1} \dots ds_1 \\
 = \int_{\theta}^t f_1(s_1)   \int_{\Delta_{\theta,s_1}^n} f_2(s_2) \dots f_k(s_k) & \int_{\Delta_{\theta, s_k}^p} g_1(r_1) \dots g_p(r_p) dr_p \dots dr_1  \\
&  \times  f_{k+1}(s_{k+1}) \dots f_{n+1}(s_{n+1}) ds_{n+1} \dots ds_2  ds_1 \\
 =   \sum_{\sigma \in A_{n,p}} \int_{\theta}^t f_1(s_1)   \int_{\Delta_{\theta,s_1}^{n+p}} & h^{\sigma}_1(w_1) \dots h^{\sigma}_{n+p}(w_{n+p}) dw_{n+p} \dots dw_1 ds_1\\
= \sum_{\tilde{\sigma} \in A_{n+1,p}} \int_{\Delta_{\theta,t}^{n+1+p}} & h^{\tilde{\sigma}}_1(w_1) \dots \tilde{h}^{\tilde{\sigma}}_{w_{n+1+p}} dw_1 \dots dw_{n+1+p},
\end{align*}
where $A_{n+1,p}$ is the set of permutations $\tilde{\sigma}$ of $\{1, \dots, n+1+p\}$ such that $\tilde{\sigma}(1) = 1$ and $\tilde{\sigma}(j+1) = \sigma(j)$, $j=1, \dots, n+p$ for some $\sigma \in A_{n,p}$ .

\end{proof}

\begin{rem}
Notice that the set $A_{n,p}$ in the above lemma also depends on $k$ but we shall not need this fact.
\end{rem}

\subsection{Fractional Brownian motion}
Let $B^H = \{B_t^H, t\in [0,T]\}$ be a $d$-dimensional \emph{fractional Brownian motion} with Hurst parameter $H\in (0,1/2)$ defined on a probability space $(\Omega,\mathfrak{A},P)$. In other words, $B^H$ is a centered Gaussian process with covariance structure
$$(R_H(t,s))_{i,j}:=\textrm{ E}[B_t^{H,(i)} B_s^{H,(j)}]=\frac{1}{2}\left(t^{2H} + s^{2H} - |t-s|^{2H} \right), \quad i,j=1,\dots,d.$$
Observe that $\textrm{ E}[|B_t^H - B_s^H|^2]= d|t-s|^{2H}$ and hence $B^H$ has stationary increments and H\"{o}lder continuous trajectories of index $H-\varepsilon$ for all $\varepsilon\in(0,H)$. Observe moreover that the increments of $B^H$, $H\in (0,1/2)$ are not independent. This fact makes computations more difficult. Another difficulty one encounters is that $B^H$ is not a semimartingale, see e.g. \cite[Proposition 5.1.1]{Nua10}.

Now we give a brief survey on how to construct fractional Brownian motion via an isometry. Since the construction can be done componentwise we present here for simplicity the one-dimensional case. Further details can be found in \cite{Nua10}.

Denote by $\mathcal{E}$ the set of step functions on $[0,T]$ and denote by $\mathcal{H}$ the Hilbert space defined as the closure of $\mathcal{E}$ with respect to the inner product 
$$\langle 1_{[0,t]} , 1_{[0,s]}\rangle_{\mathcal{H}} = R_H(t,s).$$
The mapping $1_{[0,t]} \mapsto B_t$ can be extended to an isometry between $\mathcal{H}$ and the Gaussian subspace of $L^2(\Omega)$ associated with $B^H$. Denote such isometry by $\varphi \mapsto B^H(\varphi)$. We recall the following result (see \cite[Proposition 5.1.3]{Nua10} ) which gives an integral representation of $R_H(t,s)$ when $H<1/2$:

\begin{prop}
Let $H<1/2$. The kernel
$$K_H(t,s)= c_H \left[\left( \frac{t}{s}\right)^{H- \frac{1}{2}} (t-s)^{H- \frac{1}{2}} + \left( \frac{1}{2}-H\right) s^{\frac{1}{2}-H} \int_s^t u^{H-\frac{3}{2}} (u-s)^{H-\frac{1}{2}} du\right],$$
where $c_H = \sqrt{\frac{2H}{(1-2H) \beta(1-2H , H+1/2)}}$ being $\beta$ the Beta function, satisfies
\begin{align}\label{VI_RH}
R_H(t,s) = \int_0^{t\wedge s} K_H(t,u)K_H(s,u)du.
\end{align}
\end{prop}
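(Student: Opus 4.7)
The plan is to verify the identity (\ref{VI_RH}) by a direct computation, driven by the fact that $K_H$ has an especially simple partial derivative in its first argument. A routine application of the product rule to $(t/s)^{H-1/2}(t-s)^{H-1/2}$ together with Leibniz's rule on the upper limit of $\int_s^t u^{H-3/2}(u-s)^{H-1/2}du$ produces, after the shared summand $(H-\tfrac12)s^{1/2-H}t^{H-3/2}(t-s)^{H-1/2}$ cancels, the clean identity
\begin{equation*}
\frac{\partial K_H}{\partial t}(t,s) \;=\; c_H\Bigl(H-\tfrac12\Bigr)\Bigl(\frac{t}{s}\Bigr)^{H-1/2}(t-s)^{H-3/2}, \qquad 0<s<t.
\end{equation*}
This compact expression is the engine of the whole proof.

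Assuming without loss of generality that $s\leq t$ and setting $\phi(t,s):=\int_0^s K_H(t,u)K_H(s,u)\,du$, differentiation under the integral sign yields
\begin{equation*}
\frac{\partial \phi}{\partial t}(t,s) \;=\; c_H\Bigl(H-\tfrac12\Bigr)\int_0^s \Bigl(\frac{t}{u}\Bigr)^{H-1/2}(t-u)^{H-3/2} K_H(s,u)\,du.
\end{equation*}
Inserting the defining expression for $K_H(s,u)$, swapping the order of integration via Fubini, and performing the scaling substitution $u=\rho v$ (with $\rho$ the auxiliary integration variable sitting inside $K_H(s,u)$) turns the inner integrals into Beta integrals; the specific value $c_H^2 = 2H/[(1-2H)\beta(1-2H,H+\tfrac12)]$ is chosen precisely so that the constants collapse and the right-hand side reduces to $Ht^{2H-1}-H(t-s)^{2H-1}=\partial_t R_H(t,s)$. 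The proof is then concluded by verifying the boundary value $\phi(s,s)=\int_0^s K_H(s,u)^2\,du = s^{2H} = R_H(s,s)$ — which is the variance case $E[(B_s^H)^2]=s^{2H}$ of the desired identity — and integrating in $t$ over $[s,t]$.

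The main obstacle is the singular behaviour of $K_H(s,u)$ as $u\uparrow s$ (it blows up like $(s-u)^{H-1/2}$), combined with the exponent $H-\tfrac32<-1$ appearing in $\partial_t K_H$: these together make Fubini and the interchange of differentiation and integration delicate, since the integrands are not absolutely integrable in the naive sense. I would handle this by first carrying out the analysis on the truncated domain $u\in[0,s-\varepsilon]$, evaluating the Beta integrals — whose outcome produces the harmless integrable exponent $H-\tfrac12>-1$ — and only then taking $\varepsilon\downarrow 0$. A conceptually cleaner alternative, using the machinery of Subsection~\ref{VI_fraccal}, is to recognise $K_H(\cdot,s)$ as a Riemann-Liouville operator of order $\tfrac12-H$ applied to a power function, after which (\ref{VI_RH}) reduces to the adjointness identity $\langle I^{\alpha}_{a^+}f,g\rangle_{L^2}=\langle f,I^{\alpha}_{b^-}g\rangle_{L^2}$, bypassing pointwise estimates altogether.
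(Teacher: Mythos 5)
The paper does not actually prove this proposition: it is quoted verbatim from \cite[Proposition 5.1.3]{Nua10}, which in turn rests on the fractional--calculus factorisation of Decreusefond and \"{U}st\"{u}nel. So your attempt should be measured against that standard argument rather than against anything in the text.

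Your opening move is correct and is indeed the natural engine: the cancellation giving $\frac{\partial K_H}{\partial t}(t,s)=c_H\bigl(H-\tfrac12\bigr)(t/s)^{H-1/2}(t-s)^{H-3/2}$ checks out, and for fixed $0<s<t$ the integral $\int_0^s\partial_tK_H(t,u)K_H(s,u)\,du$ is absolutely convergent (the only singularity, $(s-u)^{H-1/2}$ at $u=s$, is integrable), so differentiating under the integral sign is unproblematic away from the diagonal. The genuine gap lies in the two steps you present as routine. First, the integrals produced by inserting the definition of $K_H(s,u)$ and applying Fubini are \emph{not} Beta integrals: after the substitution $u=\rho v$ the inner integral becomes $\rho^{3/2-H}\int_0^1v^{1-2H}(1-v)^{H-1/2}(t-\rho v)^{H-3/2}\,dv$, an Euler integral for a Gauss hypergeometric function ${}_2F_1$ in the argument $\rho/t$ --- three singular points, not two. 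Showing that the full combination collapses to $Ht^{2H-1}-H(t-s)^{2H-1}$ is precisely the hard part of the proof; you assert it rather than carry it out, and it requires either a hypergeometric reduction identity or, equivalently, the composition formula for Riemann--Liouville operators. Second, the boundary value $\int_0^sK_H(s,u)^2\,du=s^{2H}$ is itself the diagonal case of the identity being proven, so it cannot be imported as ``the variance case''; by the self-similarity $K_H(\lambda t,\lambda s)=\lambda^{H-1/2}K_H(t,s)$ it reduces to the normalisation $\int_0^1K_H(1,v)^2\,dv=1$, which is exactly the computation that fixes $c_H$ and is again of hypergeometric type. Your ``cleaner alternative'' points in the right direction but is understated: one needs the full factorisation $(K_H\varphi)(s)=I_{0^+}^{2H}s^{\frac12-H}I_{0^+}^{\frac12-H}s^{H-\frac12}\varphi$ recalled in Subsection \ref{VI_fraccal} together with the semigroup property $I_{0^+}^{\alpha}I_{0^+}^{\beta}=I_{0^+}^{\alpha+\beta}$ and a fractional-integral representation of $R_H$ itself, not merely the adjointness relation $\langle I_{a^+}^{\alpha}f,g\rangle_{L^2}=\langle f,I_{b^-}^{\alpha}g\rangle_{L^2}$.
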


The kernel $K_H$ can also be represented by means of fractional derivatives as follows
$$K_H(t,s) = c_H \Gamma \left( H+\frac{1}{2}\right) s^{\frac{1}{2}-H} \left( D_{t^-}^{\frac{1}{2}-H} u^{H-\frac{1}{2}}\right)(s).$$

Consider the linear operator $K_H^{\ast}: \mathcal{E} \rightarrow L^2([0,T])$ defined by
$$(K_H^{\ast} \varphi)(s) = K_H(T,s)\varphi(s) + \int_s^T (\varphi(t)-\varphi(s)) \frac{\partial K_H}{\partial t}(t,s)dt$$
for every $\varphi \in \mathcal{E}$. Observe that $(K_H^{\ast} 1_{[0,t]})(s) = K_H(t,s)1_{[0,t]}(s)$, then from this fact and \eqref{VI_RH} we see that $K_H^{\ast}$ is an isometry between $\mathcal{E}$ and $L^2([0,T])$ which can be extended to the Hilbert space $\mathcal{H}$.

For a given $\varphi\in \mathcal{H}$ one can show the following two representations for $K_H^{\ast}$ in terms of fractional derivatives
$$(K_H^{\ast} \varphi)(s) = c_H \Gamma\left( H+\frac{1}{2}\right) s^{\frac{1}{2}-H} \left(D_{T^-}^{\frac{1}{2}-H} u^{H-\frac{1}{2}}\varphi(u)\right) (s)$$
and
\begin{align*}
(K_H^{\ast} \varphi)(s) =& \, c_H \Gamma\left( H+\frac{1}{2}\right)\left(D_{T^-}^{\frac{1}{2}-H} \varphi(s)\right) (s)\\
&+ c_H \left( \frac{1}{2}-H\right)\int_s^T  \varphi(t) (t-s)^{H-\frac{3}{2}} \left(1- \left(\frac{t}{s}\right)^{H-\frac{1}{2}}\right)dt.
\end{align*}

One can show that $\mathcal{H} = I_{T^-}^{\frac{1}{2}-H}(L^2)$ (see \cite{decreu.ustunel.98} and \cite[Proposition 6]{alos.mazet.nualart.01}).

Given the fact that $K_H^{\ast}$ is an isometry from $\mathcal{H}$ into $L^2([0,T])$ the $d$-dimensional process $W=\{W_t, t\in [0,T]\}$ defined by
\begin{align}\label{VI_WBH}
W_t := B^H((K_H^{\ast})^{-1}(1_{[0,t]}))
\end{align}
is a Wiener process and the process $B^H$ has the following representation
\begin{align}\label{VI_BHW}
B_t^H = \int_0^t K_H(t,s) dW_s,
\end{align}
see \cite{alos.mazet.nualart.01}.

Henceforward, we will denote by $W$ a standard Wiener process on a given probability space $(\Omega, \mathfrak{A}, P)$ equipped with the natural filtration $\mathcal{F}=\{\mathcal{F}_t\}_{t\in [0,T]}$ generated by $W$ augmented by all $P$-null sets and $B:=B^H$ the fractional Brownian motion with Hurst parameter $H\in (0,1/2)$ given by the representation \eqref{VI_BHW}.

Next, we give a version of Girsanov's theorem for fractional Brownian motion which is due to \cite[Theorem 4.9]{decreu.ustunel.98}. Here we present the version given in \cite[Theorem 3.1]{nualart.ouknine.02} but first we need to define an isomorphism $K_H$ from $L^2([0,T])$ onto $I_{0+}^{H+\frac{1}{2}}(L^2)$ associated with the kernel $K_H(t,s)$ in terms of the fractional integrals as follows, see \cite[Theorem 2.1]{decreu.ustunel.98}
$$(K_H \varphi)(s) = I_{0^+}^{2H} s^{\frac{1}{2}-H} I_{0^+}^{\frac{1}{2}-H}s^{H-\frac{1}{2}}  \varphi, \quad \varphi \in L^2([0,T]).$$

From this and the properties of the Riemann-Liouville fractional integrals and derivatives the inverse of $K_H$ is given by
$$(K_H^{-1} \varphi)(s) = s^{\frac{1}{2}-H} D_{0^+}^{\frac{1}{2}-H} s^{H-\frac{1}{2}} D_{0^+}^{2H} \varphi(s), \quad \varphi \in I_{0+}^{H+\frac{1}{2}}(L^2).$$

It follows that if $\varphi$ is absolutely continuous, see \cite{nualart.ouknine.02}, one can show that
\begin{align}\label{VI_inverseKH}
(K_H^{-1} \varphi)(s) = s^{H-\frac{1}{2}} I_{0^+}^{\frac{1}{2}-H} s^{\frac{1}{2}-H}\varphi'(s).
\end{align}

\begin{thm}[Girsanov's theorem for fBm]\label{VI_girsanov}
Let $u=\{u_t, t\in [0,T]\}$ be an $\mathcal{F}$-adapted process with integrable trajectories  and set
$\widetilde{B}_t^H = B_t^H + \int_0^t u_s ds, \quad t\in [0,T].$
Assume that
\begin{itemize}
\item[(i)] $\int_0^{\cdot} u_s ds \in I_{0+}^{H+\frac{1}{2}} (L^2 ([0,T])$, $P$-a.s.

\item[(ii)] $\textrm{ E}[\xi_T]=1$ where
$$\xi_T := \exp\left\{-\int_0^T K_H^{-1}\left( \int_0^{\cdot} u_r dr\right)(s)dW_s - \frac{1}{2} \int_0^T K_H^{-1} \left( \int_0^{\cdot} u_r dr \right)^2(s)ds \right\}.$$
\end{itemize}
Then the shifted process $\widetilde{B}^H$ is an $\mathcal{F}$-fractional Brownian motion with Hurst parameter $H$ under the new probability $\widetilde{P}$ defined by $\frac{d\widetilde{P}}{dP}=\xi_T$.
\end{thm}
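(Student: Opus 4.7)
The plan is to reduce the statement to the classical Girsanov theorem for the underlying standard Brownian motion $W$ from the representation $B_t^H=\int_0^t K_H(t,s)\,dW_s$ in \eqref{VI_BHW}. The key observation is that by assumption (i) the drift $t\mapsto\int_0^t u_r\,dr$ lies in the image of $K_H$ acting on $L^2([0,T])$, so we may set
\begin{equation*}
v_s:=K_H^{-1}\!\left(\int_0^\cdot u_r\,dr\right)(s),\qquad s\in[0,T],
\end{equation*}
and, by definition of $K_H$, rewrite
\begin{equation*}
\int_0^t u_s\,ds=\int_0^t K_H(t,s)\,v_s\,ds.
\end{equation*}
Combining this with \eqref{VI_BHW} gives
\begin{equation*}
\widetilde B_t^H=\int_0^t K_H(t,s)\,dW_s+\int_0^t K_H(t,s)\,v_s\,ds=\int_0^t K_H(t,s)\,d\widetilde W_s,
\end{equation*}
where $\widetilde W_t:=W_t+\int_0^t v_s\,ds$. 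Here I would also verify that $v$ is $\mathcal{F}$-adapted, which follows from the adaptedness of $u$ together with the explicit formula \eqref{VI_inverseKH} for $K_H^{-1}$ in terms of left-sided Riemann-Liouville operators (these are causal in $s$).

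Next I would apply the classical Cameron-Martin-Girsanov theorem to $W$. Condition (ii) asserts exactly that
\begin{equation*}
\xi_T=\exp\!\left\{-\int_0^T v_s\,dW_s-\frac12\int_0^T v_s^2\,ds\right\}
\end{equation*}
is a true martingale with $E[\xi_T]=1$. Consequently, under the measure $\widetilde P$ with $d\widetilde P/dP=\xi_T$, the process $\widetilde W$ is a standard $\mathcal{F}$-Brownian motion on $[0,T]$.

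Finally, I would conclude by noting that the law of an fBm with Hurst parameter $H$ is characterized by the representation $t\mapsto\int_0^t K_H(t,s)\,d\beta_s$ for any standard Brownian motion $\beta$. Since $\widetilde B^H$ admits exactly this representation with respect to the $\widetilde P$-Brownian motion $\widetilde W$, it is an $\mathcal{F}$-fractional Brownian motion with Hurst parameter $H$ under $\widetilde P$.

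The main technical obstacle is step one, namely constructing the adapted $L^2$-process $v$ from condition (i): showing that $K_H^{-1}$ maps the (random) function $\int_0^\cdot u_r\,dr$ into a genuine $L^2([0,T])$ element $P$-a.s., and that the resulting $v$ is $\mathcal{F}$-adapted. This is a purely fractional-calculus issue about the range $I_{0+}^{H+\frac12}(L^2)$ and the causal nature of the Riemann-Liouville operators, and once it is handled the rest of the argument is just classical Girsanov composed with the $K_H$-isometry.
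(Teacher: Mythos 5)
Your proposal is correct: the paper itself states Theorem \ref{VI_girsanov} without proof, citing \cite[Theorem 4.9]{decreu.ustunel.98} and \cite[Theorem 3.1]{nualart.ouknine.02}, and your argument is exactly the standard one from those references, namely setting $v=K_H^{-1}\left(\int_0^{\cdot}u_r\,dr\right)$, using that $K_H$ acts as integration against the kernel $K_H(t,s)$ to write $\widetilde{B}^H_t=\int_0^t K_H(t,s)\,d\widetilde{W}_s$ with $\widetilde{W}_t=W_t+\int_0^t v_s\,ds$, and invoking classical Girsanov for $W$ together with the covariance identity \eqref{VI_RH}. You also correctly flag the only delicate points, namely that condition (i) guarantees $v\in L^2([0,T])$ a.s.\ and that the causality of the left-sided Riemann--Liouville operators in \eqref{VI_inverseKH} makes $v$ adapted, so that the Dol\'{e}ans--Dade exponential in (ii) is well defined.
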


\begin{rem}
For the multidimensional case, define
$$(K_H \varphi)(s):= ( (K_H \varphi^{(1)} )(s), \dots, (K_H  \varphi^{(d)})(s))^{\ast}, \quad \varphi \in L^2([0,T];\R^d),$$
where $\ast$ denotes transposition. Similarly for $K_H^{-1}$ and $K_H^{\ast}$.
\end{rem}

Finally, we want to use a crucial property of the fractional Brownian in this paper, which is referred to in the literature as \emph{strong (two-sided) local non-determinism} (see e.g.\cite{pitt.78} or \cite{xiao.11}). This property will essentially help us to overcome the limitations of not having independent increments of the underlying noise:
There exists a constant $K>0$, depending only on $H$ and $T$, such
that for any $t\in\left[0,T\right],0<r<t$ and for $i=1,\ldots,d,$
\begin{align}\label{eq:SLND}
\mathrm{Var}\left[B_{t}^{H,i}|\left\{ B_{s}^{H,i}:\left|t-s\right|\geq r\right\} \right]\geq Kr^{2H}.
\end{align}


\section{An integration by parts formula}\label{VI_Section3}

Let $m$ be an integer and consider a $f :[0,T]^m \times (\R^d)^m \rightarrow \R$ of the form
\begin{align}\label{f}
f(s,z)= \prod_{j=1}^m f_j(s_j,z_j),\quad s = (s_1, \dots,s_m) \in [0,T]^m, \quad z = (z_1, \dots, z_m) \in (\R^d)^m,
\end{align}
where $f_j:[0,T]\times \R^d \rightarrow \R$, $j=1,\dots,m$ are smooth functions with compact support. Moreover, consider an integrable $\varkappa:[0,T]^m\rightarrow \R$ of the form
\begin{align}\label{kappa}
\varkappa(s)= \prod_{j=1}^m \varkappa_j(s_j), \quad s\in [0,T]^m,
\end{align}
where $\varkappa_j : [0,T] \rightarrow \R$, $j=1,\dots, m$ are integrable functions.

Denote by $\alpha_j$ a multi-index and $D^{\alpha_j}$ its corresponding differential operator. For $\alpha = (\alpha_1, \dots, \alpha_m)$ considered as an element of $\mathbb{N}_0^{d\times m}$ so that $|\alpha|:= \sum_{j=1}^m \sum_{l=1}^d \alpha_{j}^{(l)}$, we write
$$
D^{\alpha}f(s,z) = \prod_{j=1}^m D^{\alpha_j} f_j(s_j,z_j).
$$

The aim of this section is to derive an integration by parts formula of the form
\begin{equation} \label{ibp}
\int_{\Delta_{\theta,t}^m} D^{\alpha}f(s,B_s) ds = \int_{(\R^d)^m} \Lambda^{f}_{\alpha} (\theta,t,z)dz ,
\end{equation}
where $B:=B^H$, for a suitable random field $\Lambda^f_{\alpha}$. In fact, we have
\begin{equation} \label{LambdaDef}
\Lambda^f_{\alpha}(\theta,t ,z) = (2 \pi)^{-dm} \int_{(\R^d)^m} \int_{\Delta_{\theta,t}^m} \prod_{j=1}^m f_j(s_j,z_j) (-i u_j)^{\alpha_j} \exp \{ -i \langle u_j, B_{s_j} - z_j \rangle\}ds du .
\end{equation}

We start by \emph{defining} $\Lambda^f_{\alpha}(\theta,t,z)$ as above and show that it is a well-defined element of $L^2(\Omega)$.

Introduce the following notation: given $(s,z) = (s_1, \dots, s_m ,z_1 \dots, z_m)  \in [0,T]^m \times (\R^d)^m$ and a shuffle $\sigma \in S(m,m)$ we write
$$
f_{\sigma}(s,z) := \prod_{j=1}^{2m} f_{[\sigma(j)]}(s_j, z_{[\sigma(j)]})
$$
and
$$\varkappa_{\sigma} (s) := \prod_{j=1}^{2m} \varkappa_{[\sigma(j)]}(s_j),$$
where $[j]$ is equal to $j$ if $1\leq j\leq m$ and $j-m$ if $%
m+1\leq j\leq 2m$.

For a multiindex $\alpha$ we define

\begin{eqnarray*}
&&\Psi _{\alpha}^{f}(\theta ,t,z) \\
&:&=\prod_{l=1}^{d}\sqrt{(2\left\vert \alpha ^{(l)}\right\vert )!}%
\sum_{\sigma \in S(m,m)}\int_{\Delta _{0,t}^{2m}}\left\vert f_{\sigma
}(s,z)\right\vert \prod_{j=1}^{2m}\frac{1}{\left\vert
s_{j}-s_{j-1}\right\vert ^{H(d+2\sum_{l=1}^{d}\alpha _{\lbrack \sigma
(j)]}^{(l)})}}ds_{1}...ds_{2m}
\end{eqnarray*}

respectively,
\begin{eqnarray*}
&&\Psi _{\alpha}^{\varkappa }(\theta ,t) \\
&:&=\prod_{l=1}^{d}\sqrt{(2\left\vert \alpha ^{(l)}\right\vert )!}%
\sum_{\sigma \in S(m,m)}\int_{\Delta _{0,t}^{2m}}\left\vert \varkappa
_{\sigma }(s)\right\vert \prod_{j=1}^{2m}\frac{1}{\left\vert
s_{j}-s_{j-1}\right\vert ^{H(d+2\sum_{l=1}^{d}\alpha _{\lbrack \sigma
(j)]}^{(l)})}}ds_{1}...ds_{2m}.
\end{eqnarray*}

\begin{thm}\label{mainthmlocaltime}
Suppose that $\Psi _{\alpha}^{f}(\theta ,t,z),\Psi _{\alpha}^{\varkappa
}(\theta ,t)<\infty $. Then, defining $\Lambda _{\alpha }^{f}(\theta ,t,z)$
as in \eqref{LambdaDef} gives a random variable in $L^{2}(\Omega )$ and there exists a
universal constant $C=C(T,H,d)>0$ such that%
\begin{equation}
E[\left\vert \Lambda _{\alpha }^{f}(\theta ,t,z)\right\vert ^{2}]\leq
C^{m+\left\vert \alpha\right\vert}\Psi _{\alpha}^{f}(\theta ,t,z).  \label{supestL}
\end{equation}%
Moreover, we have%
\begin{equation}
\left\vert E[\int_{(\mathbb{R}^{d})^{m}}\Lambda _{\alpha }^{f}(\theta
,t,z)dz]\right\vert \leq C^{m/2+\left\vert \alpha\right\vert/2}\prod_{j=1}^{m}\left\Vert
f_{j}\right\Vert _{L^{1}(\mathbb{R}^{d};L^{\infty }([0,T]))}(\Psi
_{\alpha}^{\varkappa }(\theta ,t))^{1/2}.  \label{intestL}
\end{equation}

\end{thm}

\begin{proof}
For notational convenience we consider $\theta =0$ and set $\Lambda _{\alpha
}^{f}(t,z)=\Lambda _{\alpha }^{f}(0,t,z).$

For an integrable function $g:(\mathbb{R}^{d})^{m}\longrightarrow \mathbb{C}$
we can write%
\begin{eqnarray*}
&&\left\vert \int_{(\mathbb{R}^{d})^{m}}g(u_{1},...,u_{m})du_{1}...du_{m}%
\right\vert ^{2} \\
&=&\int_{(\mathbb{R}^{d})^{m}}g(u_{1},...,u_{m})du_{1}...du_{m}\int_{(%
\mathbb{R}^{d})^{m}}\overline{g(u_{m+1},...,u_{2m})}du_{m+1}...du_{2m} \\
&=&\int_{(\mathbb{R}^{d})^{m}}g(u_{1},...,u_{m})du_{1}...du_{m}(-1)^{dm}%
\int_{(\mathbb{R}^{d})^{m}}\overline{g(-u_{m+1},...,-u_{2m})}%
du_{m+1}...du_{2m},
\end{eqnarray*}%
where we used the change of variables $(u_{m+1},...,u_{2m})\longmapsto
(-u_{m+1},...,-u_{2m})$ in the third equality.

This gives%
\begin{eqnarray*}
&&\left\vert \Lambda _{\alpha }^{f}(t,z)\right\vert ^{2} \\
&=&(2\pi )^{-2dm}(-1)^{dm}\int_{(\mathbb{R}^{d})^{2m}}\int_{\Delta
_{0,t}^{m}}\prod_{j=1}^{m}f_{j}(s_{j},z_{j})(-iu_{j})^{\alpha
_{j}}e^{-i\left\langle u_{j},B_{s_{j}}-z_{j}\right\rangle }ds_{1}...ds_{m} \\
&&\times \int_{\Delta
_{0,t}^{m}}\prod_{j=m+1}^{2m}f_{[j]}(s_{j},z_{[j]})(-iu_{j})^{\alpha
_{\lbrack j]}}e^{-i\left\langle u_{j},B_{s_{j}}-z_{[j]}\right\rangle
}ds_{m+1}...ds_{2m}du_{1}...du_{2m} \\
&=&(2\pi )^{-2dm}(-1)^{dm}\sum_{\sigma \in S(m,m)}\int_{(\mathbb{R}%
^{d})^{2m}}\left( \prod_{j=1}^{m}e^{-i\left\langle
z_{j},u_{j}+u_{j+m}\right\rangle }\right) \\
&&\times \int_{\Delta _{0,t}^{2m}}f_{\sigma
}(s,z)\prod_{j=1}^{2m}u_{\sigma (j)}^{\alpha _{\lbrack \sigma
(j)]}}\exp \left\{ -\sum_{j=1}^{2m}\left\langle u_{\sigma
(j)},B_{s_{j}}\right\rangle \right\} ds_{1}...ds_{2m}du_{1}...du_{2m},
\end{eqnarray*}%
where we used (\ref{shuffleIntegral}) in the last step.

Taking the expectation on both sides yields%
\begin{eqnarray}
&&E[\left\vert \Lambda _{\alpha }^{f}(t,z)\right\vert ^{2}]
\label{Lambda} \\
&=&(2\pi )^{-2dm}(-1)^{dm}\sum_{\sigma \in S(m,m)}\int_{(\mathbb{R}%
^{d})^{2m}}\left( \prod_{j=1}^{m}e^{-i\left\langle
z_{j},u_{j}+u_{j+m}\right\rangle }\right)   \notag \\
&&\times \int_{\Delta _{0,t}^{2m}}f_{\sigma
}(s,z)\prod_{j=1}^{2m}u_{\sigma (j)}^{\alpha _{\lbrack \sigma
(j)]}}\exp \left\{ -\frac{1}{2}Var[\sum_{j=1}^{2m}\left\langle u_{\sigma
(j)},B_{s_{j}}\right\rangle ]\right\} ds_{1}...ds_{2m}du_{1}...du_{2m} 
\notag \\
&=&(2\pi )^{-2dm}(-1)^{dm}\sum_{\sigma \in S(m,m)}\int_{(\mathbb{R}%
^{d})^{2m}}\left( \prod_{j=1}^{m}e^{-i\left\langle
z_{j},u_{j}+u_{j+m}\right\rangle }\right)   \notag \\
&&\times \int_{\Delta _{0,t}^{2m}}f_{\sigma
}(s,z)\prod_{j=1}^{2m}u_{\sigma (j)}^{\alpha _{\lbrack \sigma
(j)]}}\exp \left\{ -\frac{1}{2}\sum_{l=1}^{d}Var[\sum_{j=1}^{2m}u_{\sigma
(j)}^{(l)}B_{s_{j}}^{(l)}]\right\}
ds_{1}...ds_{2m}du_{1}^{(1)}...du_{2m}^{(1)}  \notag \\
&&...du_{1}^{(d)}...du_{2m}^{(d)}  \notag \\
&=&(2\pi )^{-2dm}(-1)^{dm}\sum_{\sigma \in S(m,m)}\int_{(\mathbb{R}%
^{d})^{2m}}\left( \prod_{j=1}^{m}e^{-i\left\langle
z_{j},u_{j}+u_{j+m}\right\rangle }\right)   \notag \\
&&\times \int_{\Delta _{0,t}^{2m}}f_{\sigma
}(s,z)\prod_{j=1}^{2m}u_{\sigma (j)}^{\alpha _{\lbrack \sigma
(j)]}}\prod_{l=1}^{d}\exp \left\{ -\frac{1}{2}((u_{\sigma
(j)}^{(l)})_{1\leq j\leq 2m})^{\ast}Q((u_{\sigma (j)}^{(l)})_{1\leq j\leq
2m})\right\} ds_{1}...ds_{2m}  \notag \\
&&du_{\sigma (1)}^{(1)}...du_{\sigma (2m)}^{(1)}...du_{\sigma
(1)}^{(d)}...du_{\sigma (2m)}^{(d)},  \notag
\end{eqnarray}%
where $\ast$ denotes transposition and
\begin{equation*}
Q=Q(s):=(E[B_{s_{i}}^{(1)}B_{s_{j}}^{(1)}])_{1\leq i,j\leq 2m}.
\end{equation*}%
Further, we see that%
\begin{eqnarray}
&&\int_{\Delta _{0,t}^{2m}}\left\vert f_{\sigma }(s,z)\right\vert \int_{(%
\mathbb{R}^{d})^{2m}}\prod_{j=1}^{2m}\prod_{l=1}^{d}\left%
\vert u_{\sigma (j)}^{(l)}\right\vert ^{\alpha _{\lbrack \sigma
(j)]}^{(l)}}\prod_{l=1}^{d}\exp \left\{ -\frac{1}{2}((u_{\sigma
(j)}^{(l)})_{1\leq j\leq 2m})^{\ast}Q((u_{\sigma (j)}^{(l)})_{1\leq j\leq
2m})\right\}   \notag \\
&&du_{\sigma (1)}^{(1)}...du_{\sigma (2m)}^{(1)}...du_{\sigma
(1)}^{(d)}...du_{\sigma (2m)}^{(d)}ds_{1}...ds_{2m}  \notag \\
&=&\int_{\Delta _{0,t}^{2m}}\left\vert f_{\sigma }(s,z)\right\vert \int_{(%
\mathbb{R}^{d})^{2m}}\prod_{j=1}^{2m}\prod_{l=1}^{d}\left%
\vert u_{j}^{(l)}\right\vert ^{\alpha _{\lbrack \sigma (j)]}^{(l)}}  \notag
\\
&&\times \prod_{l=1}^{d}\exp \left\{ -\frac{1}{2}\left\langle
Qu^{(l)},u^{(l)}\right\rangle \right\}   \notag \\
&&du_{1}^{(1)}...du_{2m}^{(1)}...du_{1}^{(d)}...du_{2m}^{(d)}ds_{1}...ds_{2m}
\notag \\
&=&\int_{\Delta _{0,t}^{2m}}\left\vert f_{\sigma }(s,z)\right\vert
\prod_{l=1}^{d}\int_{\mathbb{R}^{2m}}(\prod_{j=1}^{2m}\left%
\vert u_{j}^{(l)}\right\vert ^{\alpha _{\lbrack \sigma (j)]}^{(l)}})\exp
\left\{ -\frac{1}{2}\left\langle Qu^{(l)},u^{(l)}\right\rangle \right\}
du_{1}^{(l)}...du_{2m}^{(l)}ds_{1}...ds_{2m},  \label{Lambda2}
\end{eqnarray}%
where%
\begin{equation*}
u^{(l)}:=(u_{j}^{(l)})_{1\leq j\leq 2m}.
\end{equation*}%
We have that%
\begin{eqnarray*}
&&\int_{\mathbb{R}^{2m}}(\prod_{j=1}^{2m}\left\vert
u_{j}^{(l)}\right\vert ^{\alpha _{\lbrack \sigma (j)]}^{(l)}})\exp \left\{ -%
\frac{1}{2}\left\langle Qu^{(l)},u^{(l)}\right\rangle \right\}
du_{1}^{(l)}...du_{2m}^{(l)} \\
&=&\frac{1}{(\det Q)^{1/2}}\int_{\mathbb{R}^{2m}}(\prod_{j=1}^{2m}%
\left\vert \left\langle Q^{-1/2}u^{(l)},e_{j}\right\rangle \right\vert
^{\alpha _{\lbrack \sigma (j)]}^{(l)}})\exp \left\{ -\frac{1}{2}\left\langle
u^{(l)},u^{(l)}\right\rangle \right\} du_{1}^{(l)}...du_{2m}^{(l)},
\end{eqnarray*}%
where $e_{i},i=1,...,2m$ is the standard ONB of $\mathbb{R}^{2m}$. 

We also get that%
\begin{eqnarray*}
&&\int_{\mathbb{R}^{2m}}(\prod_{j=1}^{2m}\left\vert \left\langle
Q^{-1/2}u^{(l)},e_{j}\right\rangle \right\vert ^{\alpha _{\lbrack \sigma
(j)]}^{(l)}})\exp \left\{ -\frac{1}{2}\left\langle
u^{(l)},u^{(l)}\right\rangle \right\} du_{1}^{(l)}...du_{2m}^{(l)} \\
&=&(2\pi )^{m}E[\prod_{j=1}^{2m}\left\vert \left\langle
Q^{-1/2}Z,e_{j}\right\rangle \right\vert ^{\alpha _{\lbrack \sigma
(j)]}^{(l)}}],
\end{eqnarray*}%
where%
\begin{equation*}
Z\sim \mathcal{N}(\mathcal{O},I_{2m\times 2m}).
\end{equation*}%
We know from Lemma \ref{LiWei}, which is a type of Brascamp-Lieb
inequality that%
\begin{eqnarray*}
&&E[\prod_{j=1}^{2m}\left\vert \left\langle
Q^{-1/2}Z,e_{j}\right\rangle \right\vert ^{\alpha _{\lbrack \sigma
(j)]}^{(l)}}] \\
&\leq &\sqrt{perm(\sum )}=\sqrt{\sum_{\pi \in S_{2\left\vert \alpha
^{(l)}\right\vert }}\prod_{i=1}^{2\left\vert \alpha
^{(l)}\right\vert }a_{i\pi (i)}},
\end{eqnarray*}%
where $perm(\sum )$ is the permanent of the covariance matrix $\sum =(a_{ij})
$ of the Gaussian random vector%
\begin{equation*}
\underset{\alpha _{\lbrack \sigma (1)]}^{(l)}\text{ times}}{\underbrace{%
(\left\langle Q^{-1/2}Z,e_{1}\right\rangle ,...,\left\langle
Q^{-1/2}Z,e_{1}\right\rangle }},\underset{\alpha _{\lbrack \sigma (2)]}^{(l)}%
\text{ times}}{\underbrace{\left\langle Q^{-1/2}Z,e_{2}\right\rangle
,...,\left\langle Q^{-1/2}Z,e_{2}\right\rangle }},...,\underset{\alpha
_{\lbrack \sigma (2m)]}^{(l)}\text{ times}}{\underbrace{\left\langle
Q^{-1/2}Z,e_{2m}\right\rangle ,...,\left\langle
Q^{-1/2}Z,e_{2m}\right\rangle }}),
\end{equation*}%
$\left\vert \alpha ^{(l)}\right\vert :=\sum_{j=1}^{m}\alpha _{j}^{(l)}$ and
where $S_{n}$ stands for the permutation group of size $n$.

In addition, using an upper bound for the permanent of positive semidefinite
matrices (see \cite{AG}) or
direct computations we get that%
\begin{equation}
perm(\sum )=\sum_{\pi \in S_{2\left\vert \alpha ^{(l)}\right\vert
}}\prod_{i=1}^{2\left\vert \alpha ^{(l)}\right\vert }a_{i\pi
(i)}\leq (2\left\vert \alpha ^{(l)}\right\vert
)!\prod_{i=1}^{2\left\vert \alpha ^{(l)}\right\vert }a_{ii}.
\label{PSD}
\end{equation}

Let now $i\in \lbrack \sum_{r=1}^{j-1}\alpha _{\lbrack \sigma
(r)]}^{(l)}+1,\alpha _{\lbrack \sigma (j)]}^{(l)}]$ for some arbitrary fixed 
$j\in \{1,...,2m\}$. Then%
\begin{equation*}
a_{ii}=E[\left\langle Q^{-1/2}Z,e_{j}\right\rangle \left\langle
Q^{-1/2}Z,e_{j}\right\rangle ].
\end{equation*}

\bigskip Further using substitution, we also have that%
\begin{eqnarray*}
&&E[\left\langle Q^{-1/2}Z,e_{j}\right\rangle \left\langle
Q^{-1/2}Z,e_{j}\right\rangle ] \\
&=&(\det Q)^{1/2}\frac{1}{(2\pi )^{m}}\int_{\mathbb{R}^{2m}}\left\langle
u,e_{j}\right\rangle ^{2}\exp (-\frac{1}{2}\left\langle Qu,u\right\rangle
)du_{1}...du_{2m} \\
&=&(\det Q)^{1/2}\frac{1}{(2\pi )^{m}}\int_{\mathbb{R}^{2m}}u_{j}^{2}\exp (-%
\frac{1}{2}\left\langle Qu,u\right\rangle )du_{1}...du_{2m}
\end{eqnarray*}

\bigskip

We now want to use Lemma \ref{CD}. Then we get that%
\begin{eqnarray*}
&&\int_{\mathbb{R}^{2m}}u_{j}^{2}\exp (-\frac{1}{2}\left\langle
Qu,u\right\rangle )du_{1}...du_{m} \\
&=&\frac{(2\pi )^{(2m-1)/2}}{(\det Q)^{1/2}}\int_{\mathbb{R}}v^{2}\exp (-%
\frac{1}{2}v^{2})dv\frac{1}{\sigma _{j}^{2}} \\
&=&\frac{(2\pi )^{m}}{(\det Q)^{1/2}}\frac{1}{\sigma _{j}^{2}},
\end{eqnarray*}%
where $\sigma _{j}^{2}:=Var[B_{s_{j}}^{H}\left\vert
B_{s_{1}}^{H},...,B_{s_{2m}}^{H}\text{ without }B_{s_{j}}^{H}\right] .$

We now want to use strong local non-determinism of the form (see (\ref{eq:SLND})): For all $t\in
\lbrack 0,T],$ $0<r<t:$%
\begin{equation*}
Var[B_{t}^{H}\left\vert B_{s}^{H},\left\vert t-s\right\vert \geq r\right]
\geq Kr^{2H}.
\end{equation*}%
The latter implies that 
\begin{equation*}
(\det Q(s))^{1/2}\geq K^{(2m-1)/2}\left\vert s_{1}\right\vert ^{H}\left\vert
s_{2}-s_{1}\right\vert ^{H}...\left\vert s_{2m}-s_{2m-1}\right\vert ^{H}
\end{equation*}%
as well as%
\begin{equation*}
\sigma _{j}^{2}\geq K\min \{\left\vert s_{j}-s_{j-1}\right\vert
^{2H},\left\vert s_{j+1}-s_{j}\right\vert ^{2H}\}.
\end{equation*}%
Thus%
\begin{eqnarray*}
\prod_{j=1}^{2m}\sigma _{j}^{-2\alpha _{\lbrack \sigma (j)]}^{(l)}}
&\leq &K^{-2m}\prod_{j=1}^{2m}\frac{1}{\min \{\left\vert
s_{j}-s_{j-1}\right\vert ^{2H\alpha _{\lbrack \sigma (j)]}^{(l)}},\left\vert
s_{j+1}-s_{j}\right\vert ^{2H\alpha _{\lbrack \sigma (j)]}^{(l)}}\}} \\
&\leq &C^{m+\left\vert \alpha ^{(l)}\right\vert}\prod_{j=1}^{2m}\frac{1}{\left\vert
s_{j}-s_{j-1}\right\vert ^{4H\alpha _{\lbrack \sigma (j)]}^{(l)}}}
\end{eqnarray*}%
for a constant $C$ only depending on $H$ and $T$.

\bigskip Hence, it follows from (\ref{PSD}) that%
\begin{eqnarray*}
perm(\sum ) &\leq &(2\left\vert \alpha ^{(l)}\right\vert
)!\prod_{i=1}^{2\left\vert \alpha ^{(l)}\right\vert }a_{ii} \\
&\leq &(2\left\vert \alpha ^{(l)}\right\vert
)!\prod_{j=1}^{2m}((\det Q)^{1/2}\frac{1}{(2\pi )^{m}}\frac{(2\pi
)^{m}}{(\det Q)^{1/2}}\frac{1}{\sigma _{j}^{2}})^{\alpha _{\lbrack \sigma
(j)]}^{(l)}} \\
&\leq &(2\left\vert \alpha ^{(l)}\right\vert )!C^{m+\left\vert \alpha ^{(l)}\right\vert}\prod_{j=1}^{2m}%
\frac{1}{\left\vert s_{j}-s_{j-1}\right\vert ^{4H\alpha _{\lbrack \sigma
(j)]}^{(l)}}}.
\end{eqnarray*}%
So%
\begin{eqnarray*}
&&E[\prod_{j=1}^{2m}\left\vert \left\langle
Q^{-1/2}Z,e_{j}\right\rangle \right\vert ^{\alpha _{\lbrack \sigma
(j)]}^{(l)}}]\leq \sqrt{perm(\sum )} \\
&\leq &\sqrt{(2\left\vert \alpha ^{(l)}\right\vert )!}C^{m+\left\vert \alpha ^{(l)}\right\vert}\prod_{j=1}^{2m}\frac{1}{\left\vert s_{j}-s_{j-1}\right\vert ^{2H\alpha
_{\lbrack \sigma (j)]}^{(l)}}}.
\end{eqnarray*}%
Therefore we obtain from (\ref{Lambda}) and (\ref{Lambda2}) that%
\begin{eqnarray*}
&&E[\left\vert \Lambda _{\alpha }^{f}(\theta ,t,z)\right\vert ^{2}] \\
&\leq &C^{m}\sum_{\sigma\in S(m,m)}\int_{\Delta _{0,t}^{2m}}\left\vert f_{\sigma }(s,z)\right\vert
\prod_{l=1}^{d}\int_{\mathbb{R}^{2m}}(\prod_{j=1}^{2m}\left%
\vert u_{j}^{(l)}\right\vert ^{\alpha _{\lbrack \sigma (j)]}^{(l)}})\exp
\left\{ -\frac{1}{2}\left\langle Qu^{(l)},u^{(l)}\right\rangle \right\}
du_{1}^{(l)}...du_{2m}^{(l)}ds_{1}...ds_{2m} \\
&\leq &M^{m}\sum_{\sigma\in S(m,m)}\int_{\Delta _{0,t}^{2m}}\left\vert f_{\sigma }(s,z)\right\vert 
\frac{1}{(\det Q(s))^{d/2}}\prod_{l=1}^{d}\sqrt{(2\left\vert \alpha
^{(l)}\right\vert )!}C^{m+\left\vert \alpha ^{(l)}\right\vert}\prod_{j=1}^{2m}\frac{1}{\left\vert
s_{j}-s_{j-1}\right\vert ^{2H\alpha _{\lbrack \sigma (j)]}^{(l)}}}%
ds_{1}...ds_{2m} \\
&=&M^{m}C^{md+\left\vert \alpha\right\vert}\prod_{l=1}^{d}\sqrt{(2\left\vert \alpha
^{(l)}\right\vert )!}\sum_{\sigma\in S(m,m)}\int_{\Delta _{0,t}^{2m}}\left\vert f_{\sigma
}(s,z)\right\vert \prod_{j=1}^{2m}\frac{1}{\left\vert
s_{j}-s_{j-1}\right\vert ^{H(d+2\sum_{l=1}^{d}\alpha _{\lbrack \sigma
(j)]}^{(l)})}}ds_{1}...ds_{2m}
\end{eqnarray*}%
for a constant $M$ depending on $d$.

\bigskip Finally, we show estimate (\ref{intestL}). Using the inequality (\ref{supestL}), we find that%
\begin{eqnarray*}
&&\left\vert E\left[ \int_{(\mathbb{R}^{d})^{m}}\Lambda _{\alpha
}^{\varkappa f}(\theta ,t,z)dz\right] \right\vert \\
&\leq &\int_{(\mathbb{R}^{d})^{m}}(E[\left\vert \Lambda _{\alpha
}^{\varkappa f}(\theta ,t,z)\right\vert ^{2})^{1/2}dz\leq C^{m/2+\left\vert \alpha\right\vert/2}\int_{(%
\mathbb{R}^{d})^{m}}(\Psi _{\alpha}^{\varkappa f}(\theta ,t,z))^{1/2}dz.
\end{eqnarray*}%
Taking the supremum over $[0,T]$ for each function $f_{j}$, i.e.%
\begin{equation*}
\left\vert f_{[\sigma (j)]}(s_{j},z_{[\sigma (j)]})\right\vert \leq
\sup_{s_{j}\in \lbrack 0,T]}\left\vert f_{[\sigma (j)]}(s_{j},z_{[\sigma
(j)]})\right\vert ,j=1,...,2m
\end{equation*}%
one obtains that%
\begin{eqnarray*}
&&\left\vert E\left[ \int_{(\mathbb{R}^{d})^{m}}\Lambda _{\alpha
}^{\varkappa f}(\theta ,t,z)dz\right] \right\vert \\
&\leq &C^{m+\left\vert \alpha\right\vert}\max_{\sigma \in S(m,m)}\int_{(\mathbb{R}^{d})^{m}}\left(
\prod_{l=1}^{2m}\left\Vert f_{[\sigma (l)]}(\cdot ,z_{[\sigma
(l)]})\right\Vert _{L^{\infty }([0,T])}\right) ^{1/2}dz \\
&&\times (\prod_{l=1}^{d}\sqrt{(2\left\vert \alpha ^{(l)}\right\vert
)!}\sum_{\sigma \in S(m,m)}\int_{\Delta _{0,t}^{2m}}\left\vert \varkappa
_{\sigma }(s)\right\vert \prod_{j=1}^{2m}\frac{1}{\left\vert
s_{j}-s_{j-1}\right\vert ^{H(d+2\sum_{l=1}^{d}\alpha _{\lbrack \sigma
(j)]}^{(l)})}}ds_{1}...ds_{2m})^{1/2} \\
&=&C^{m+\left\vert \alpha\right\vert}\max_{\sigma \in S(m,m)}\int_{(\mathbb{R}^{d})^{m}}\left(
\prod_{l=1}^{2m}\left\Vert f_{[\sigma (l)]}(\cdot ,z_{[\sigma
(l)]})\right\Vert _{L^{\infty }([0,T])}\right) ^{1/2}dz\cdot (\Psi
_{\alpha}^{\varkappa }(\theta ,t))^{1/2} \\
&=&C^{m+\left\vert \alpha\right\vert}\int_{(\mathbb{R}^{d})^{m}}\prod_{j=1}^{m}\left\Vert
f_{j}(\cdot ,z_{j})\right\Vert _{L^{\infty }([0,T])}dz\cdot (\Psi
_{\alpha}^{\varkappa }(\theta ,t))^{1/2} \\
&=&C^{m+\left\vert \alpha\right\vert}\prod_{j=1}^{m}\left\Vert f_{j}(\cdot ,z_{j})\right\Vert
_{L^{1}(\mathbb{R}^{d};L^{\infty }([0,T]))}\cdot (\Psi _{\alpha}^{\varkappa
}(\theta ,t))^{1/2}.
\end{eqnarray*}

\end{proof}

We remark that \emph{a priori} one can not interchange the order of integration in (\ref{LambdaDef}). Indeed, for $m=1$, $f \equiv 1$ one gets an integral of the Donsker-Delta function which is not a random variable in the usual sense. 
To overcome this define for $R>0$, 
$$
\Lambda^f_{\alpha,R} (\theta,t,z) := (2\pi)^{-dm} \int_{B(0,R)} \int_{\Delta^m_{\theta,t}} \prod_{j=1}^m f_j(s_j,z_j) (-iu_j)^{\alpha_j} e^{-i \langle u_j, B_{s_j}-z_j\rangle } ds du,
$$
where $B(0,R):=\{v\in (\R^{d})^m : \, |v|<R\}$. Clearly we have
$$
|\Lambda^f_{\alpha,R} (\theta,t,z)| \leq C_{R} \int_{\Delta^m_{\theta,t}} \prod_{j=1}^m |f_j(s_j,z_j)| ds 
$$
for an appropriate constant $C_R$. Let us assume that the above right-hand side is integrable over $(\R^d)^m$. 

Similar computations as above show that $\Lambda^f_{\alpha,R}(\theta,t,z)\to \Lambda^f_{\alpha} (\theta,t,z)$ in $L^2(\Omega)$ as $R\to \infty$ for all $\theta,t$ and $z$.

Lebesgue's dominated convergence theorem and the fact that the Fourier transform is an automorphism on the Schwarz space yield
\begin{align*}
\int_{(\R^d)^m}&  \Lambda^f_{\alpha}(\theta,t,z) dz = \lim_{R\to \infty} \int_{(\R^d)^m}  \Lambda^f_{\alpha, R} (\theta,t,z)dz\\
&= \lim_{R\to \infty} (2\pi)^{-dm}\int_{(\R^d)^m} \int_{B(0,R)} \int_{\Delta_{\theta,t}^m}  \prod_{j=1}^m f_j(s_j,z_j) (-iu_j)^{\alpha_j} e^{-i  \langle u_j, B_{s_j}-z_j \rangle_{\R^d}} dsdudz\\
&= \lim_{R\to \infty}   \int_{\Delta_{\theta,t}^m}   \int_{B(0,R)} (2\pi)^{-dm} \int_{(\R^d)^m}   \prod_{j=1}^m f_j(s_j,z_j)e^{i u_j z_j}  (-iu_j)^{\alpha_j} e^{-i  \langle u_j, B_{s_j}\rangle_{\R^d}} \, dzduds\\
&= \lim_{R\to \infty} \int_{\Delta_{\theta,t}^m}    \int_{B(0,R)}  \prod_{j=1}^m \widehat{f}_j(s,-u_j)  (-iu_j)^{\alpha_j} e^{-i \langle u_j, B_{s_j}\rangle_{\R^d}} duds\\
&= \int_{\Delta_{\theta,t}^m} D^{\alpha}f(s,B_s)ds
\end{align*}
which is exactly \eqref{ibp}.

Next, we give a crucial estimate which shows why fractional Brownian motion actually regularises \eqref{VI_SDE}. It is based on integration by parts and the aforementioned properties of the local-time $L$. The estimate we obtain can be presented in a more explicit way when
\begin{align*}
\varkappa_j(s) = (K_H(s,\theta)-K_H(s,\theta'))^{\varepsilon_j}, \quad \theta < s < t
\end{align*}
or,
\begin{align*}
\varkappa_j(s) = (K_H(s,\theta))^{\varepsilon_j}, \quad \theta < s < t
\end{align*}
for every $j=1,\dots,m$ with $(\varepsilon_1,\dots, \varepsilon_{m})\in \{0,1\}^{m}$ and we will see why these choices are important in the next section.

\begin{prop}\label{mainestimate1}
Let $B^{H},H\in (0,1/2)$ be a standard $d-$dimensional fractional Brownian
motion and functions $f$ and $\varkappa $ as in (\ref{f}), respectively
as in (\ref{kappa}). Let $\theta ,\theta \prime ,t\in \lbrack 0,T],\theta
\prime <\theta <t$ and%
\begin{equation*}
\varkappa _{j}(s)=(K_{H}(s,\theta )-K_{H}(s,\theta \prime ))^{\varepsilon
_{j}},\theta <s<t
\end{equation*}
for every $j=1,...,m$ with $(\varepsilon _{1},...,\varepsilon _{m})\in
\{0,1\}^{m}$ for $\theta ,\theta \prime \in \lbrack 0,T]$ with $\theta
\prime <\theta .$ Let $\alpha \in (\mathbb{N}_{0}^{d})^{m}$ be a
multi-index. If 
\begin{align*}\label{3.7}
H<\frac{\frac{1}{2}-\gamma }{(d-1+2\sum_{l=1}^{d}\alpha _{
j}^{(l)})}
\end{align*}%
for all $j$, where $\gamma \in (0,H)$ is sufficiently small, then there exists a universal constant $C$ (depending on $H$, $T$
and $d$, but independent of $m$, $\{f_{i}\}_{i=1,...,m}$ and $\alpha $) such
that for any $\theta ,t\in \lbrack 0,T]$ with $\theta <t$ we have%
\begin{eqnarray*}
&&\left\vert E\int_{\Delta _{\theta ,t}^{m}}\left(
\prod_{j=1}^{m}D^{\alpha _{j}}f_{j}(s_{j},B_{s_{j}}^{H})\varkappa
_{j}(s_{j})\right) ds\right\vert  \\
&\leq &C^{m+\left\vert \alpha\right\vert}\prod_{j=1}^{m}\left\Vert f_{j}(\cdot ,z_{j})\right\Vert
_{L^{1}(\mathbb{R}^{d};L^{\infty }([0,T]))}\left( \frac{\theta -\theta
\prime }{\theta \theta \prime }\right) ^{\gamma \sum_{j=1}^{m}\varepsilon
_{j}}\theta ^{(H-\frac{1}{2}-\gamma )\sum_{j=1}^{m}\varepsilon _{j}} \\
&&\times \frac{(\prod_{l=1}^{d}(2\left\vert \alpha ^{(l)}\right\vert
)!)^{1/4}(t-\theta )^{-H(md+2\left\vert \alpha \right\vert )-(H-\frac{1}{2}%
-\gamma )\sum_{j=1}^{m}\varepsilon _{j}+m}}{\Gamma (-H(2md+4\left\vert
\alpha \right\vert )+2(H-\frac{1}{2}-\gamma )\sum_{j=1}^{m}\varepsilon
_{j}+2m)^{1/2}}.
\end{eqnarray*}%

\end{prop}

\begin{proof}
By definition of $\Lambda _{\alpha }^{\varkappa f}$ (\ref{LambdaDef}) it
immediately follows that the integral in our proposition can be expressed as%
\begin{equation*}
\int_{\Delta _{\theta ,t}^{m}}\left( \prod_{j=1}^{m}D^{\alpha
_{j}}f_{j}(s_{j},B_{s_{j}}^{H})\varkappa _{j}(s_{j})\right) ds=\int_{\mathbb{%
R}^{dm}}\Lambda _{\alpha }^{\varkappa f}(\theta ,t,z)dz.
\end{equation*}%
Taking expectation and using Theorem \ref{mainthmlocaltime} we obtain%
\begin{equation*}
\left\vert E\int_{\Delta _{\theta ,t}^{m}}\left(
\prod_{j=1}^{m}D^{\alpha _{j}}f_{j}(s_{j},B_{s_{j}}^{H})\varkappa
_{j}(s_{j})\right) ds\right\vert \leq C^{m+\left\vert \alpha \right\vert}\prod_{j=1}^{m}\left\Vert
f_{j}(\cdot ,z_{j})\right\Vert _{L^{1}(\mathbb{R}^{d};L^{\infty
}([0,T]))}\cdot (\Psi _{\alpha}^{\varkappa }(\theta ,t))^{1/2},
\end{equation*}%
where in this situation 
\begin{eqnarray*}
&&\Psi _{k}^{\varkappa }(\theta ,t) \\
&:&=\prod_{l=1}^{d}\sqrt{(2\left\vert \alpha ^{(l)}\right\vert )!}%
\sum_{\sigma \in S(m,m)}\int_{\Delta
_{0,t}^{2m}}\prod_{j=1}^{2m}(K_{H}(s_{j},\theta )-K_{H}(s_{j},\theta
\prime ))^{\varepsilon _{\lbrack \sigma (j)]}} \\
&&\frac{1}{\left\vert s_{j}-s_{j-1}\right\vert ^{H(d+2\sum_{l=1}^{d}\alpha
_{\lbrack \sigma (j)]}^{(l)})}}ds_{1}...ds_{2m}.
\end{eqnarray*}%
We want to apply Lemma \ref{VI_iterativeInt}. For this, we need that $%
-H(d+2\sum_{l=1}^{d}\alpha _{\lbrack \sigma (j)]}^{(l)})+(H-\frac{1}{2}%
-\gamma )\varepsilon _{\lbrack \sigma (j)]}>-1$ for all $j=1,...,2m.$ The
worst case is, when $\varepsilon _{\lbrack \sigma (j)]}=1$ for all $j$. So $%
H<\frac{\frac{1}{2}-\gamma }{(d-1+2\sum_{l=1}^{d}\alpha _{\lbrack \sigma
(j)]}^{(l)})}$ for all $j$.
Hence, we have%
\begin{eqnarray*}
\Psi _{\alpha}^{\varkappa }(\theta ,t) &\leq &\sum_{\sigma \in S(m,m)}\left( 
\frac{\theta -\theta \prime }{\theta \theta \prime }\right) ^{\gamma
\sum_{j=1}^{2m}\varepsilon _{\lbrack \sigma (j)]}}\theta ^{(H-\frac{1}{2}%
-\gamma )\sum_{j=1}^{2m}\varepsilon _{\lbrack \sigma (j)]}} \\
&&\times \prod_{l=1}^{d}\sqrt{(2\left\vert \alpha ^{(l)}\right\vert
)!}\Pi _{\gamma }(2m)(t-\theta )^{-H(2md+4\left\vert \alpha \right\vert )+(H-%
\frac{1}{2}-\gamma )\sum_{j=1}^{2m}\varepsilon _{\lbrack \sigma (j)]}+2m},
\end{eqnarray*}%
where $\Pi _{\gamma }(m)$ is defined as in Lemma \ref{VI_iterativeInt}. The latter can be bounded above as follows
\begin{equation*}
\Pi_{\gamma }(2m)\leq \frac{\prod_{j=1}^{2m}\Gamma
(1-H(d+2\sum_{l=1}^{d}\alpha _{\lbrack \sigma (j)]}^{(l)}))}{\Gamma
(-H(2md+4\left\vert \alpha \right\vert )+(H-\frac{1}{2}-\gamma
)\sum_{j=1}^{2m}\varepsilon _{\lbrack \sigma (j)]}+2m)}.
\end{equation*}%
Observe that $\sum_{j=1}^{2m}\varepsilon _{\lbrack \sigma
(j)]}=2\sum_{j=1}^{m}\varepsilon _{j}.$ Therefore, we have that%
\begin{eqnarray*}
&&(\Psi _{k}^{\varkappa }(\theta ,t))^{1/2} \\
&\leq &C^{m}\left( \frac{\theta -\theta \prime }{\theta \theta \prime }%
\right) ^{\gamma \sum_{j=1}^{m}\varepsilon _{j}}\theta ^{(H-\frac{1}{2}%
-\gamma )\sum_{j=1}^{m}\varepsilon _{j}} \\
&&\times \frac{(\prod_{l=1}^{d}(2\left\vert \alpha ^{(l)}\right\vert
)!)^{1/4}(t-\theta )^{-H(md+2\left\vert \alpha \right\vert )-(H-\frac{1}{2}%
-\gamma )\sum_{j=1}^{m}\varepsilon _{j}+m}}{\Gamma (-H(2md+4\left\vert
\alpha \right\vert )+2(H-\frac{1}{2}-\gamma )\sum_{j=1}^{m}\varepsilon
_{j}+2m)^{1/2}},
\end{eqnarray*}%
where we used $\prod_{j=1}^{2m}\Gamma (1-H(d+2\sum_{l=1}^{d}\alpha _{\lbrack \sigma (j)]}^{(1)})\leq C^{m}$ for a large
enough constant $C>0$ and $\sqrt{a_{1}+...+a_{m}}\leq \sqrt{a_{1}}+...\sqrt{%
a_{m}}$ for arbitrary non-negative numbers $a_{1},...,a_{m}$.

\end{proof}

\begin{prop}\label{mainestimate2}
Let $B^{H},H\in (0,1/2)$ be a standard $d-$dimensional fractional Brownian
motion and functions $f$ and $\varkappa $ as in (\ref{f}), respectively
as in (\ref{kappa}). Let $\theta ,t\in \lbrack 0,T]$ with $\theta <t$ and%
\begin{equation*}
\varkappa _{j}(s)=(K_{H}(s,\theta ))^{\varepsilon _{j}},\theta <s<t
\end{equation*}%
for every $j=1,...,m$ with $(\varepsilon _{1},...,\varepsilon _{m})\in
\{0,1\}^{m}$. Let $\alpha \in (\mathbb{N}_{0}^{d})^{m}$ be a
multi-index. If 
\begin{align*}
H<\frac{\frac{1}{2}-\gamma }{(d-1+2\sum_{l=1}^{d}\alpha _{
j}^{(l)})}
\end{align*}%
for all $j$, where $\gamma \in (0,H)$ is sufficiently small, then there exists a universal constant $C$ (depending on $H$, $T$
and $d$, but independent of $m$, $\{f_{i}\}_{i=1,...,m}$ and $\alpha $) such
that for any $\theta ,t\in \lbrack 0,T]$ with $\theta <t$ we have%
\begin{eqnarray*}
&&\left\vert E\int_{\Delta _{\theta ,t}^{m}}\left(
\prod_{j=1}^{m}D^{\alpha _{j}}f_{j}(s_{j},B_{s_{j}}^{H})\varkappa
_{j}(s_{j})\right) ds\right\vert  \\
&\leq &C^{m+\left\vert \alpha\right\vert}\prod_{j=1}^{m}\left\Vert f_{j}(\cdot ,z_{j})\right\Vert
_{L^{1}(\mathbb{R}^{d};L^{\infty }([0,T]))}\theta ^{(H-\frac{1}{2})\sum_{j=1}^{m}\varepsilon _{j}} \\
&&\times \frac{(\prod_{l=1}^{d}(2\left\vert \alpha ^{(l)}\right\vert
)!)^{1/4}(t-\theta )^{-H(md+2\left\vert \alpha \right\vert )-(H-\frac{1}{2}%
-\gamma )\sum_{j=1}^{m}\varepsilon _{j}+m}}{\Gamma (-H(2md+4\left\vert
\alpha \right\vert )+2(H-\frac{1}{2}-\gamma )\sum_{j=1}^{m}\varepsilon
_{j}+2m)^{1/2}}.
\end{eqnarray*}%

\end{prop}

\begin{proof}
The proof is similar to the previous proposition.
\end{proof}

\begin{rem}\label{Remark 3.4}
We mention that%
\begin{equation*}
\prod_{l=1}^{d}(2\left\vert \alpha ^{(l)}\right\vert )!\leq
(2\left\vert \alpha \right\vert )!C^{\left\vert \alpha \right\vert }
\end{equation*}%
for a constant $C$ depending on $d$. Later on in the paper, when we deal
with the existence of strong solutions, we will consider the case%
\begin{equation*}
\alpha _{j}^{(l)}\in \{0,1\}\text{ for all }j,l
\end{equation*}%
with%
\begin{equation*}
\left\vert \alpha \right\vert =m.
\end{equation*}

\end{rem}


\section{Existence and uniqueness of global strong solutions}

As outlined in the introduction the object of study is a time-inhomogeneous SDE with additive $d$-dimensional fractional Brownian noise $B^H$ with Hurst parameter $H\in (0,1/2)$, i.e.
\begin{align}
dX_t = b(t,X_t)dt + dB_t^H, \quad X_0 = x\in \R^d, \quad t\in [0,T],
\end{align}
where $b:[0,T]\times \R^d\rightarrow \R^d$ is a Borel-measurable function. We will study equation \eqref{VI_SDE} when the drift coefficient $b$ belongs to $L^1(\R^d; L^\infty([0,T],\R^d)) \cap L^\infty(\R^d ; L^\infty ([0,T],\R^d))$. We will introduce the following short notation for the following functional spaces
\begin{align*}
L_{\infty}^{1} &:= L^1(\R^d; L^\infty([0,T],\R^d))\\
L_{\infty}^{\infty} &:= L^\infty(\R^d ; L^\infty ([0,T],\R^d))\\
L_{\infty,\infty}^{1,\infty} &:= L_{\infty}^{1}\cap L_{\infty}^{\infty}.
\end{align*}
Hence the subscript refers to the \emph{supremum}'s norm on $[0,T]$ whereas the superscript indicates the norm used for the space variable.

Hereunder, we establish the main result of this section.

\begin{thm}\label{VI_mainthm}
Let $b\in L_{\infty,\infty}^{1,\infty}$. Then if $H<\frac{1}{2(2+d)}$, $d\geq 1$ there exists a unique (global) strong solution $X=\{X_t, t\in [0,T]\}$ of equation \eqref{VI_SDE}. Moreover, for every $t\in [0,T]$, $X_t$ is Malliavin differentiable in the direction of the Brownian motion $W$ in \eqref{VI_WBH}.
\end{thm}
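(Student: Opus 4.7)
The plan is to adapt the compactness plus Malliavin calculus method of \cite{MBP10}, \cite{MMNPZ10}, \cite{MNP14} to the fractional setting, with the integration-by-parts estimates of Section \ref{VI_Section3} as the key analytic input. Let $\{b_n\}_n \subset C_c^{\infty}([0,T] \times \R^d)$ be smooth mollifications of $b$ with $\sup_n \|b_n\|_{L_{\infty}^{1,\infty}} \leq 2 \|b\|_{L_{\infty}^{1,\infty}}$ and $b_n \to b$ almost everywhere and in $L_{\infty}^1$. Classical Lipschitz theory produces unique strong solutions $X^n$ to $dX^n_t = b_n(t, X^n_t)\,dt + dB_t^H$, $X^n_0 = x$, and Theorem \ref{VI_girsanov} provides equivalent measures $\widetilde{P}_n$ under which $\widetilde{B}^n := X^n - x$ is a $d$-dimensional fractional Brownian motion; the uniform $L^{\infty}$-bound on $b_n$ together with Novikov's criterion yields $\sup_n \widetilde{E}_n[(\xi_T^n)^{\pm q}] < \infty$ for every $q \geq 1$.

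The crux of the proof is to establish a uniform Malliavin bound $\sup_n \|X^n_t\|_{\mathbb{D}^{1,2}} < \infty$, with derivative taken with respect to the underlying Brownian motion $W$ of \eqref{VI_BHW}. Differentiating the SDE yields the linear Volterra equation
\[
D_s^W X^n_t = K_H(t,s) I_d + \int_s^t \nabla b_n(r, X^n_r) D_s^W X^n_r\,dr,
\]
whose Picard resolution gives
\[
D_s^W X^n_t = K_H(t,s) I_d + \sum_{m=1}^{\infty} \int_{\Delta_{s,t}^m} K_H(r_m,s) \prod_{j=1}^{m} \nabla b_n(r_j, X^n_{r_j})\,dr.
\]
Denoting the $m$-th summand by $J_m(s,t)$, one uses Girsanov to replace $X^n_{r_j}$ with $x + \widetilde{B}^n_{r_j}$ (a pure fBm trajectory under $\widetilde{P}_n$) and, after standard H\"older-type manipulations that absorb the Girsanov density into the uniform moment bounds on $(\xi_T^n)^{\pm 1}$, reduces the estimation of $E[|J_m(s,t)|^2]$ to an expectation of an iterated integral against fBm. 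The shuffle identity \eqref{shuffleIntegral} expands this into a sum of integrals over simplices $\Delta_{0,T}^M$ carrying $M$ first-order derivatives of $b_n$ together with a bounded number of $K_H(\cdot,s)$ factors in the role of the $\kappa_j$, to which Propositions \ref{VI_mainestimate1} and \ref{VI_mainestimate2} apply with $k = 1$. The decisive gain is that the resulting bound depends on $\|b_n\|_{L_{\infty}^{1,\infty}}$ rather than on any Sobolev norm of $b_n$: this is the main obstacle of the entire proof, since $\|\nabla b_n\|$ blows up along the approximation, and it is precisely the local-time/integration-by-parts machinery of Section \ref{VI_Section3} that circumvents it. The threshold $H < 1/(2(3d-1))$ arises from requiring the argument of the Gamma function in the denominators of \eqref{VI_mainEst1}--\eqref{VI_mainEst2} to stay positive at the minimal admissible simplex order (a short calculation shows the critical case to be precisely $m=1$, for which one recovers exactly $1/(2(3d-1))$), after which the super-exponential growth of $\Gamma$ renders the series $\sum_m \|J_m\|_{L^2(\Omega)}$ convergent uniformly in $n$.

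Given the uniform $\mathbb{D}^{1,2}$-estimate, the compactness criterion of \cite{DPMN92} extracts a subsequence along which $X^n_t \to X_t$ in $L^2(\Omega)$, and closedness of $D^W$ places the limit in $\mathbb{D}^{1,2}$, establishing the Malliavin-differentiability claim. To identify $X$ with a solution of \eqref{VI_SDE} it remains to pass to the limit in the drift: writing $b_n(s, X^n_s) - b(s, X_s) = (b_n - b)(s, X^n_s) + b(s, X^n_s) - b(s, X_s)$, the first term vanishes using $b_n \to b$ in $L_\infty^1$ together with a uniform upper bound on the densities of $X^n_s$ (itself a consequence of Girsanov and the uniform moment bounds on $(\xi_T^n)^{\pm 1}$), while the second is handled by combining the $L^2(\Omega)$-convergence $X^n_s \to X_s$ with the same density bound and a mollification of $b$. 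Finally, Theorem \ref{VI_girsanov} also delivers uniqueness in law for weak solutions of \eqref{VI_SDE}, since the Radon--Nikodym kernel depends only on $b$ and the solution trajectory; combined with the strong existence just obtained, a converse Yamada--Watanabe argument (in the spirit of Cherny) upgrades this to pathwise uniqueness. Since each $X^n_t$ is $\mathcal{F}^W_t$-measurable, so is its $L^2$-limit $X_t$, which completes the proof of the strong-solution claim.
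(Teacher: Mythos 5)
Your overall strategy is the one the paper uses: Girsanov to get a weak solution and uniform exponential moments, smooth approximations $b_n$, a Picard expansion of $D_\theta X_t^n$ whose iterated integrals are reduced via the shuffle identity \eqref{shuffleIntegral} to the integration-by-parts estimates of Propositions \ref{VI_mainestimate1} and \ref{VI_mainestimate2} with $k=1$, the compactness criterion of \cite{DPMN92}, and finally strong existence plus uniqueness in law to conclude pathwise uniqueness. Your account of where $H<\frac{1}{2(3d-1)}$ comes from is also correct: with $k=1$ and $\sum_j\varepsilon_j=4$ the constraint \eqref{VI_Hcond} is minimized at $m=1$ and gives exactly $\frac{1}{6d-2}$. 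However, there is one genuine gap in the compactness step as you state it. A uniform bound $\sup_n\|X_t^n\|_{\mathbb{D}^{1,2}}<\infty$ does \emph{not} yield relative compactness in $L^2(\Omega)$ (take $X_n=W(h_n)$ for an orthonormal basis $h_n$ of $\mathcal H$: this is bounded in $\mathbb{D}^{1,2}$ and has no convergent subsequence). The criterion of \cite{DPMN92}, in the form of Corollary \ref{VI_compactcrit}, additionally requires a Besov-type equicontinuity bound
$\sup_n\int_0^t\int_0^t |\theta-\theta'|^{-1-2\beta}\,E\bigl[\|D_{\theta}X_t^n-D_{\theta'}X_t^n\|^2\bigr]\,d\theta\,d\theta'<\infty$
for some $\beta\in(0,1/2)$. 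Establishing this is the hardest part of the paper's Lemma \ref{VI_relcomp}: one must decompose $D_{\theta'}X_t^n-D_{\theta}X_t^n$ into three terms and run the Picard/shuffle machinery with the \emph{increments} $K_H(\cdot,\theta)-K_H(\cdot,\theta')$ in the role of the $\kappa_j$, which is precisely what Proposition \ref{VI_mainestimate1} (as opposed to \ref{VI_mainestimate2}) is designed for, in order to extract the factor $|\theta-\theta'|^{\gamma}$. You cite the right proposition, but your stated goal and your stated conclusion both stop at the $\mathbb{D}^{1,2}$ bound, so the argument as written does not deliver compactness.

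On the identification of the limit you take a genuinely different route from the paper. You pass to the limit directly in the drift integral, splitting $b_n(s,X_s^n)-b(s,X_s)$ and controlling each piece via uniform bounds on the laws of $X_s^n$ under $P$ (Girsanov plus Cauchy--Schwarz against the Gaussian density of $B_s^H$) and a mollification of $b$. This can be made to work, but it needs several additional details you only gesture at: a single subsequence along which $X_s^n\to X_s$ for all $s$ simultaneously (diagonalization over rationals plus the uniform $L^2$-equicontinuity $\|X_t^n-X_s^n\|_{L^2}\leq M|t-s|+\|B_t^H-B_s^H\|_{L^2}$), and a law bound for the \emph{limit} $X_s$, which must itself be obtained by passing to the limit in the bounds for $X_s^n$. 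The paper avoids all of this: it first builds a weak solution $X$ by Girsanov, shows in Lemma \ref{VI_weakconv} (by testing against the total family $\Sigma_t$ of Wick exponentials) that $\varphi(X_t^n)$ converges weakly to $E[\varphi(X_t)\,|\,\mathcal F_t]$, and then combines this with the strong convergence from compactness to get $\varphi(E[X_t|\mathcal F_t])=E[\varphi(X_t)|\mathcal F_t]$ for all Lipschitz $\varphi$, which forces $X_t$ to be $\mathcal F_t$-measurable; no limit in the drift is ever taken. Your route is more self-contained (it never needs the pre-constructed weak solution except for uniqueness in law), at the price of the extra measure-theoretic bookkeeping above; the paper's route trades that for the weak-convergence lemma.
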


The proof of Theorem \ref{VI_mainthm} is based on the following steps:

\begin{enumerate}
\item First, we construct a weak solution $X$ to (\ref{VI_SDE}) by means of Girsanov's theorem, that is we introduce a probability space $(\Omega, \mathfrak{A}, P)$ that carries a fractional Brownian motion $B^H$ and a process $X$ such that (\ref{VI_SDE}) is fulfilled. However, a priori $X$ is not adapted to the filtration $\mathcal{F}=\{\mathcal{F}_t\}_{t\in[0,T]}$ generated by $B^H$.

\item Next, we approximate the drift coefficient $b$ a.e. by a sequence of functions (which always exists by standard approximation results) $b_n\subset C_c^{\infty}([0,T]\times \R^d)$, $n\geq 0$ (actually it suffices to look at approximating coefficients which are only smooth with respect to the space variable) such that
\begin{align}\label{approxb}
b_n(t,x) \longrightarrow b(t,x)
\end{align}
as $n\to \infty$ for a.e. $(t,x)\in [0,T]\times \R^d$ with $\sup_{n\geq 0}\|b_n\|_{L_{\infty}^{1}}<\infty$ and such that $|b_n(t,x)|\leq M<\infty$, $n\geq 0$ a.e. for some constant $M$. By standard results on SDEs, we know that for each smooth coefficient $b_n$, $n\ge 0$, there exists unique strong solution $X_\cdot^{n}$ to the SDE  
\begin{align}\label{VI_Xn}
dX_t^{n} = b_n(t,X_t^{n}) du + dB_t^H, \,\,0\leq t\leq T,\,\,\, X_0^{n}=x  \in \mathbb{R}^d\,.
\end{align}
We then show that for each $t\in[0,T]$ the sequence $X_t^{n}$ converges weakly to the conditional expectation $\textrm{ E}[X_t|\mathcal{F}_t]$ in the space $L^2(\Omega;\mathcal{F}_t)$ of square integrable, \mbox{$\mathcal{F}_t$-measurable} random variables. 
\item It is well known, see e.g. \cite{Nua10}, that for each $t\in[0,T]$ the strong solution $X_t^{n}$, $n\ge 0$, is Malliavin differentiable, and that the Malliavin derivative $D_s X_t^{n}$, $0\leq s\leq t$, with respect to $W$ in \eqref{VI_WBH} satisfies
\begin{align}\label{VI_DXn1}
D_s X_t^{n} = K_H(t,s) I_{d} + \int_s^t b_n'(u,X_u^{n})D_s X_u^{n} du, 
\end{align}
where $b_n'$ denotes the Jacobian of $b_n$ and $I_d$ the identity matrix in $\R^{d\times d}$. In the next step we then employ a compactness criterion based on Malliavin calculus to show that for every $t\in [0,T]$ the set of random variables $\{X_t^{n}\}_{n \geq 0}$ is relatively compact in $L^2(\Omega)$, which then admits the conclusion that $X_t^{n}$ converges strongly in $L^2(\Omega;\mathcal{F}_t)$ to $\textrm{ E}[X_t|\mathcal{F}_t]$. Further we see that $\textrm{ E}[X_t|\mathcal{F}_t]$ is Malliavin differentiable as a consequence of the compactness criterion.
\item In the last step we show that $\textrm{ E}[X_t|\mathcal{F}_t] = X_t$, which implies that $X_t$ is $\mathcal{F}_t$-measurable and thus a strong solution on our specific probability space.
\end{enumerate}

We turn to the first step of our scheme which is to construct weak solutions of \eqref{VI_SDE} by using Girsanov's theorem in this context. Let $(\Omega, \mathfrak{A}, \widetilde{P})$ be some given probability space which carries a $d$-dimensional fractional Brownian motion $\widetilde{B}^H$ with Hurst parameter $H\in (0,1/2)$ and set $X_t := x+ \widetilde{B}_t^H$, \mbox{$t\in [0,T]$}, $x\in \R^d$. Set $\theta_t := \left(K_H^{-1}\left(\int_0^{\cdot} b(r,X_r)dr\right) \right) (t)$ and consider the Dol\'{e}ans-Dade exponential
\begin{align*}
\xi_t &:= \mathcal{E}\left( \theta_{\cdot}\right)_t:= \exp \left\{ \int_0^t \theta_s^T dW_s -\frac{1}{2} \int_0^t \theta_s^T \theta_s ds  \right\}, \quad t\in [0,T].
\end{align*}

The following two lemmata show that the conditions of Theorem \ref{VI_girsanov} hold.

\begin{lem}\label{VI_novikov0}
Let $\tilde{B}_t^H$ be a $d$-dimensional fractional Brownian motion with respect to $(\Omega, \mathfrak{A}, \tilde{P})$. Then
$$\int_0^{\cdot} |b(s,\tilde{B}_s^H)| ds \in I_{0^+}^{H+\frac{1}{2}} (L^2), \quad P-a.s.$$
\end{lem}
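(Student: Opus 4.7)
The plan is to exploit the characterization $\mathcal{H} = I_{0^+}^{H+1/2}(L^2)$ together with the fact that $K_H$ is an isomorphism from $L^2([0,T])$ onto $I_{0^+}^{H+1/2}(L^2)$, so that membership in $I_{0^+}^{H+1/2}(L^2)$ is equivalent to $K_H^{-1}$ of the function lying in $L^2([0,T])$. Writing $\varphi(t) := \int_0^t |b(s,\tilde B_s^H)|\,ds$, the map $t\mapsto \varphi(t)$ is absolutely continuous with $\varphi(0) = 0$ and $\varphi'(s) = |b(s,\tilde B_s^H)|$. Hence formula \eqref{VI_inverseKH} applies and gives the pathwise identity
$$
(K_H^{-1}\varphi)(s) \;=\; s^{H-\frac12}\, I_{0^+}^{\frac12-H}\!\bigl(u^{\frac12-H}\,|b(u,\tilde B_u^H)|\bigr)(s).
$$
Thus it suffices to show this function lies in $L^2([0,T])$ for $P$-a.e.\ $\omega$.

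Next, I would use the assumption $b\in L_\infty^{1,\infty}\subset L_\infty^{\infty}$ to bound $|b(u,\tilde B_u^H)|\leq \|b\|_{L_\infty^\infty}$ uniformly in $u$ and $\omega$. Inserting this bound pulls the randomness out and reduces everything to the deterministic fractional integral
$$
I_{0^+}^{\frac12-H}\!\bigl(u^{\frac12-H}\bigr)(s) \;=\; \frac{1}{\Gamma(\tfrac12-H)}\int_0^s (s-u)^{-\frac12-H}\, u^{\frac12-H}\,du \;=\; \frac{\Gamma(\tfrac32-H)}{\Gamma(2-2H)}\, s^{1-2H},
$$
by the Beta function identity (the integral converges since both exponents $-\tfrac12-H$ and $\tfrac12-H$ are strictly greater than $-1$). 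Combining with the factor $s^{H-1/2}$ yields the pointwise bound
$$
\bigl|(K_H^{-1}\varphi)(s)\bigr| \;\leq\; C\,\|b\|_{L_\infty^\infty}\, s^{\frac12-H},
$$
for a constant $C = C(H)$. Since $H<\tfrac12$, one has $2(\tfrac12-H)>-1$, so $s^{1-2H}\in L^1([0,T])$ and therefore $K_H^{-1}\varphi \in L^2([0,T])$ with a deterministic bound independent of $\omega$. This shows $\varphi\in I_{0^+}^{H+1/2}(L^2)$ surely, and in particular $P$-a.s., which is the claim.

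There is no genuine obstacle here: the $L^\infty$ half of the assumption $b\in L_\infty^{1,\infty}$ is already strong enough to dominate the random integrand by a constant, after which the statement reduces to an elementary computation with the fractional integral of a power function. The $L^1$ component of the norm plays no role in this lemma and will only enter in the verification of Novikov's condition $E[\xi_T]=1$ in the subsequent step.
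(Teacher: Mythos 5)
Your proof is correct and reaches the same quantitative conclusion as the paper, namely a pointwise bound of order $\|b\|_{L_{\infty}^{\infty}}\,s^{\frac{1}{2}-H}$ whose square is integrable on $[0,T]$, but it gets there through a different operator. The paper tests membership in $I_{0^+}^{H+\frac{1}{2}}(L^2)$ by applying the fractional derivative $D_{0^+}^{H+\frac{1}{2}}$ directly to $\varphi(t)=\int_0^t|b(s,\tilde B_s^H)|\,ds$, using the Marchaud-type representation from Section \ref{VI_fraccal} together with the fact that $\varphi$ is Lipschitz with constant $\|b\|_{L_{\infty}^{\infty}}$, and the elementary identity $D_{0^+}^{H+\frac{1}{2}}I_{0^+}^{H+\frac{1}{2}}=\mathrm{id}$. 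You instead test membership by computing $K_H^{-1}\varphi$ via formula \eqref{VI_inverseKH}; since both $K_H$ and $I_{0^+}^{H+\frac{1}{2}}$ are bijections of $L^2([0,T])$ onto the same image, the two criteria are equivalent, and your Beta-function evaluation is essentially verbatim the computation the paper performs in the next step (Lemma \ref{VI_novikov}) for the Novikov bound --- so your route does that computation once and reuses it, at the cost of leaning on the representation \eqref{VI_inverseKH}, which the paper imports from the literature. Two small blemishes, neither fatal: first, the opening claim $\mathcal{H}=I_{0^+}^{H+\frac{1}{2}}(L^2)$ is not what the paper states (it has $\mathcal{H}=I_{T^-}^{\frac{1}{2}-H}(L^2)$, a space of integrands, whereas $I_{0^+}^{H+\frac{1}{2}}(L^2)=K_H(L^2)$ is the relevant space of paths); this is harmless because your argument only uses the isomorphism property of $K_H$. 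Second, strictly speaking, after showing that the right-hand side of \eqref{VI_inverseKH} defines an $L^2$ function $\psi$, one should note that $K_H\psi=\varphi$ (a routine consequence of the composition rules for fractional integrals) before concluding that $\varphi$ lies in the range of $K_H$; this is the same level of informality as the paper's own use of the fractional-derivative criterion, so it does not constitute a gap.
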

\begin{proof}
Using the property that $D_{0^+}^{H+\frac{1}{2}} I _{0^+}^{H+\frac{1}{2}} (f) = f$ for $f\in L^2 ([0,T])$ we need to show that
$$D_{0^+}^{H+\frac{1}{2}}\int_0^{\cdot} |b(s,\tilde{B}_s^H)| ds\in L^2([0,T]), \quad P-a.s.$$
Indeed,
\begin{align*}
 \left|D_{0^+}^{H+\frac{1}{2}} \left(\int_0^{\cdot} |b(s,\tilde{B}_s^H)| ds\right)(t) \right|  =& \frac{1}{\Gamma\left(\frac{1}{2}-H\right)}\Bigg( \frac{1}{t^{H+\frac{1}{2}}} \int_0^t |b(u,\widetilde{B}_u^H)|du \\
 &+ \, \left( H +\frac{1}{2}\right) \int_0^t  (t-s)^{-H-\frac{3}{2}}\int_s^t |b(u,\widetilde{B}_u^H)|du  ds\Bigg)\\
 \leq & \, \frac{t^{\frac{1}{2}-H}}{\Gamma\left(\frac{1}{2}-H\right)}\frac{1}{\frac{1}{2}-H}\|b\|_{L_{\infty}^{\infty}}.
\end{align*}
Hence, for some finite constant $C_H>0$ we have
$$\left|D_{0^+}^{H+\frac{1}{2}} \left(\int_0^{\cdot} |b(s,\tilde{B}_s^H)| ds\right)(t) \right|^2 \leq C_H \|b\|_{L_{\infty}^{\infty}}^2 t^{1-2H}.$$
As a result,
$$\int_0^T \left|D_{0^+}^{H+\frac{1}{2}} \left(\int_0^{\cdot} |b(s,\tilde{B}_s^H)| ds\right)(t) \right|^2 dt \leq C_H \|b\|_{L_{\infty}^\infty}^2 \int_0^T t^{1-2H}dt<\infty, \quad P-a.s.$$
since $H\in (0,1/2)$.
\end{proof}

\begin{lem}\label{VI_novikov}
Let $\tilde{B}_t^H$ be a $d$-dimensional fractional Brownian motion with respect to $(\Omega, \mathfrak{A}, \tilde{P})$. Then for every $\mu \in \R$ we have
$$\tilde{\textrm{\emph{E}}}\left[ \exp\left\{\mu \int_0^T \left|K_H^{-1}\left( \int_0^{\cdot} b(r,\tilde{B}_r^H) dr\right) (s)\right|^2 ds\right\} \right] \leq C_{H,d,\mu,T}(\|b\|_{L_{\infty}^{\infty}})$$
for some continuous increasing function $C_{H,d,\mu,T}$ depending only on $H$, $d$, $T$ and $\mu$.

In particular,
$$\tilde{\textrm{\emph{E}}}\left[ \mathcal{E}\left(\int_0^T K_H^{-1}\left( \int_0^{\cdot} b(r,\tilde{B}_r^H) dr\right)^{\ast} (s) dW_s\right)^p \right] \leq C_{H,d,\mu,T}(\|b\|_{L_{\infty}^{\infty}}),$$
where $\tilde{\textrm{\emph{E}}}$ denotes expectation under $\tilde{P}$ and $\ast$ denotes transposition.
\end{lem}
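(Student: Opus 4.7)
The plan is to observe that boundedness of $b$ makes the inner expression deterministically bounded, after which both estimates reduce to essentially deterministic computations. Since $t \mapsto \int_0^t b(r, \tilde{B}_r^H)\,dr$ is absolutely continuous with derivative $b(s, \tilde{B}_s^H)$, formula \eqref{VI_inverseKH} yields (componentwise in the multidimensional case)
$$K_H^{-1}\!\left(\int_0^\cdot b(r, \tilde{B}_r^H)\,dr\right)\!(s) = s^{H-1/2}\, I_{0^+}^{1/2-H}\!\bigl[u^{1/2-H} b(u, \tilde{B}_u^H)\bigr](s).$$

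I would then estimate the right-hand side pointwise by pulling out $|b(u, \tilde{B}_u^H)| \leq \|b\|_{L_{\infty}^{\infty}}$, reducing the quantity to a purely deterministic fractional integral. After the substitution $u = sv$, the remaining integral is a Beta integral, giving
$$\Bigl|K_H^{-1}\!\Bigl(\int_0^\cdot b(r, \tilde{B}_r^H)\,dr\Bigr)\!(s)\Bigr| \leq C_H\, \|b\|_{L_{\infty}^{\infty}}\, s^{1/2 - H}$$
for a constant $C_H$ that is finite thanks to $H<1/2$. Squaring and integrating over $[0,T]$ produces the \emph{deterministic} bound $\int_0^T \bigl|K_H^{-1}(\cdots)(s)\bigr|^2 ds \leq C'_{H,T}\, \|b\|_{L_{\infty}^{\infty}}^2$, and the first inequality of the lemma follows at once by pulling the (now deterministic) argument outside the expectation.

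For the ``in particular'' part, I would set $M_t := \int_0^t K_H^{-1}(\cdots)^*(s)\, dW_s$, a continuous local martingale whose quadratic variation $[M, M]_T$ coincides with the deterministically bounded integral above. The algebraic identity
$$\mathcal{E}(M)_T^p = \mathcal{E}(pM)_T \cdot \exp\!\Bigl(\tfrac{p^2 - p}{2}\, [M, M]_T\Bigr)$$
splits the $p$-th power into a stochastic exponential times a deterministic factor. Novikov's condition is trivially satisfied for $pM$ by the uniform bound on $[M,M]_T$, so $\mathcal{E}(pM)$ is a genuine martingale with $E[\mathcal{E}(pM)_T] = 1$, and the deterministic exponential factor may be pulled outside the expectation, yielding the claimed bound.

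Since the entire argument reduces to a pathwise deterministic estimate, there is essentially no real obstacle. The only points requiring a bit of care are verifying that the Beta integral converges (equivalent to $H < 1/2$, which is part of the standing hypothesis) and handling the componentwise structure in the $\mathbb{R}^d$-valued case. Notably, none of the Malliavin-calculus machinery developed in Section \ref{VI_Section3} is needed here: the uniform boundedness of $b$ completely dominates the relevant fractional integral.
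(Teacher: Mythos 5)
Your proof is correct and follows essentially the same route as the paper: using the representation \eqref{VI_inverseKH}, pulling out $|b(u,\tilde{B}_u^H)|\leq \|b\|_{L_{\infty}^{\infty}}$, and evaluating the resulting Beta integral to get the deterministic pointwise bound $\bigl|K_H^{-1}\bigl(\int_0^{\cdot}b(r,\tilde{B}_r^H)dr\bigr)(s)\bigr|\leq C_H\|b\|_{L_{\infty}^{\infty}}s^{1/2-H}$, after which both claims are immediate. The paper leaves the ``in particular'' part implicit, and your explicit reduction via $\mathcal{E}(M)_T^p=\mathcal{E}(pM)_T\exp\bigl(\tfrac{p^2-p}{2}[M,M]_T\bigr)$ together with Novikov's condition is exactly the standard argument it intends.
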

\begin{proof}
Denote by $\theta_s := K_H^{-1}\left( \int_0^{\cdot} |b(r,\tilde{B}_r^H)| dr\right) (s)$. Then using relation \eqref{VI_inverseKH} we have
\begin{align*}
|\theta_s| =& |s^{H-\frac{1}{2}} I_{0^+}^{\frac{1}{2}-H} s^{\frac{1}{2}-H} |b(s,\widetilde{B}_s^H)||\\
=& \frac{1}{\Gamma \left(\frac{1}{2}-H\right)} s^{H- \frac{1}{2}} \int_0^s (s-r)^{-\frac{1}{2}-H} r^{\frac{1}{2}-H} |b(r,\widetilde{B}_r^H)|dr\\
\leq & \, \|b\|_{L_{\infty}^{\infty}}\frac{1}{\Gamma \left(\frac{1}{2}-H\right)} s^{H- \frac{1}{2}} \int_0^s (s-r)^{-\frac{1}{2}-H} r^{\frac{1}{2}-H}dr\\
=& \, \|b\|_{L_{\infty}^{\infty}} \frac{\Gamma \left(\frac{3}{2}-H\right)}{\Gamma \left(1-2H \right)}s^{\frac{1}{2}-H}\\
\leq &\, \|b\|_{L_{\infty}^{\infty}} \frac{\Gamma \left(\frac{3}{2}-H\right)}{\Gamma \left(1-2H \right)}T^{\frac{1}{2}-H}.
\end{align*}

Squaring both sides we have the following estimate
\begin{align}\label{VI_fracL2}
|\theta_s|^2 \leq C_H \|b\|_{L_{\infty}^{\infty}}^2 T^{1-2H} \quad P-a.s.,
\end{align}
where $C_H:=\frac{\Gamma \left(\frac{3}{2}-H\right)^2}{\Gamma \left(1-2H \right)^2}$.

Then we get the following estimate
\begin{align*}
\tilde{\textrm{E}}&\left[ \exp\left\{\mu \int_0^T \left|\theta_s\right|^2 ds\right\} \right] \leq \exp \left\{ |\mu| C_H T^{2(1-H)} \|b\|_{L_{\infty}^{\infty}}^2\right\}.
\end{align*}
\end{proof}

By Girsanov's theorem, see Theorem \ref{VI_girsanov}, the process
\begin{align}\label{VI_weak}
B_t^H := X_t - x - \int_0^t b(s,X_s)ds, \quad t\in [0,T]
\end{align}
is a fractional Brownian motion on $(\Omega, \mathfrak{A},P)$ with Hurst parameter $H\in (0,1/2)$, where $\frac{dP}{d\widetilde{P}}=\xi_T$. Hence, because of \eqref{VI_weak}, the couple $(X,B^H)$ is a weak solution of \eqref{VI_SDE} on $(\Omega, \mathfrak{A}, P)$. 

Henceforth, we confine ourselves to the filtered probability space $(\Omega, \mathfrak{A}, P)$, $\mathcal{F}=\{\mathcal{F}_t\}_{t\in [0,T]}$ which carries the weak solution $(X,B^H)$ of \eqref{VI_SDE}.

\begin{rem}\label{VI_stochbasisrmk}
As outlined in the scheme above, the main challenge to establish existence of a strong solution is now to show that $X$ is $\mathcal{F}$-adapted. Indeed, in that case $X_t=F_t(B_\cdot^H)$ for some family of measurable functionals $F_t$, $t\in [0,T]$ on $C([0,T];\R^d)$, and for any other stochastic basis $(\hat{\Omega}, \hat{\mathfrak{A}}, \hat{P},\hat{B})$ one gets that $X_t:=F_t(\hat{B}_\cdot)$, $t\in [0,T]$, is a $\hat{B}$-adapted solution to SDE~(\ref{VI_SDE}).  But this means exactly the existence of a strong solution to SDE~(\ref{VI_SDE}).
\end{rem}

\begin{rem}
It is worth to remark that one actually has existence of weak solutions for any $H\in (0,1/2)$ and that weak solutions for bounded $b$ are weakly unique since the estimates from Lemma \ref{VI_novikov} also hold with $X$ in place of $\widetilde{B}^H$. For this reason, the main challenge is to show that when $H$ is small enough such solutions are in fact strong. Then weak uniqueness implies strong uniqueness. See \cite{RY2004}.
\end{rem}

We now turn to the second step of our procedure.

\begin{lem}\label{VI_weakconv}
Let $\{b_n\}_{n\geq 0} \subset C_c^{\infty} ([0,T]\times \R^d)$ be an approximating sequence of $b$ in the sense of \eqref{approxb}. Denote by $X^{n}=\{X_t^n, t\in [0,T]\}$ the corresponding solutions of \eqref{VI_SDE} if we replace $b$ by $b_n$, $n\geq 0$. Then for every $t\in [0,T]$ and bounded continuous function $\varphi:\R^d \rightarrow \R$ we have that
$$\varphi(X_t^{n}) \xrightarrow{n \to \infty} \textrm{\emph{E}}\left[ \varphi(X_t) |\mathcal{F}_t \right],$$
weakly in $L^2(\Omega; \mathcal{F}_t)$.
\end{lem}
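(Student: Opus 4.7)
The natural approach is to dualise the weak convergence and transport everything to the reference measure $\widetilde P$ on which $\widetilde B^{H}:=X-x$ is a genuine fractional Brownian motion, exploiting the fact that for smooth $b_n$ the SDE enjoys both pathwise and weak uniqueness.

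\textbf{Reduction.} Since $b_n$ is smooth the solution $X^n$ is $\mathcal F_t$-measurable and $\|\varphi(X_t^n)\|_{L^2(P)}\le\|\varphi\|_\infty$ uniformly in $n$. Cylindrical functionals of $B^H$ are dense in $L^2(\Omega,\mathcal F_t;P)$, so the claim reduces to showing
\begin{equation*}
E^P[G(B_{s_1}^H,\dots,B_{s_k}^H)\,\varphi(X_t^n)] \xrightarrow[n\to\infty]{} E^P[G(B_{s_1}^H,\dots,B_{s_k}^H)\,\varphi(X_t)]
\end{equation*}
for every bounded continuous $G:(\R^d)^k\to\R$ and every $0\le s_1<\cdots<s_k\le t$.

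\textbf{Reformulation under $\widetilde P$.} By construction of the weak solution, the right hand side equals
\begin{equation*}
E^{\widetilde P}\Bigl[\xi_T(b)\,G\bigl(\widetilde B_{s_i}^H-\textstyle\int_0^{s_i}b(u,x+\widetilde B_u^H)du\bigr)_{i=1}^{k}\,\varphi(x+\widetilde B_t^H)\Bigr],
\end{equation*}
where $\xi_T(b)$ denotes the Dol\'eans exponential $Z_T$ introduced just before Lemma \ref{VI_novikov0}. For the left hand side I would apply Theorem \ref{VI_girsanov} once more, with $b_n$ in place of $b$: this produces a measure $Q_n\ll\widetilde P$ with Radon--Nikodym derivative $\xi_T(b_n)$ under which $\widehat B^{H,n}_\cdot:=\widetilde B_\cdot^H-\int_0^{\cdot}b_n(s,x+\widetilde B_s^H)ds$ is a fractional Brownian motion and $(x+\widetilde B^H,\widehat B^{H,n})$ is a weak solution of \eqref{VI_SDE} with drift $b_n$. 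Weak uniqueness (valid for bounded drifts by the remark following Lemma \ref{VI_novikov}) identifies this joint law with that of the strong pair $(X^n,B^H)$ under $P$, so that
\begin{equation*}
E^P[G(B_{s_1}^H,\dots)\varphi(X_t^n)] = E^{\widetilde P}\Bigl[\xi_T(b_n)\,G\bigl(\widetilde B_{s_i}^H-\textstyle\int_0^{s_i}b_n(u,x+\widetilde B_u^H)du\bigr)_{i=1}^{k}\,\varphi(x+\widetilde B_t^H)\Bigr].
\end{equation*}

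\textbf{Passing to the limit.} It remains to prove $\xi_T(b_n)\,G_n \to \xi_T(b)\,G_\infty$ in $L^1(\widetilde P)$, where $G_n, G_\infty$ abbreviate the $G$-terms above. Splitting into $\bigl(\xi_T(b_n)-\xi_T(b)\bigr)G_n + \xi_T(b)(G_n-G_\infty)$ I would treat the two pieces separately. For $G_n-G_\infty$, continuity and boundedness of $G$ reduce everything to showing $\int_0^{s_i}(b_n-b)(u,x+\widetilde B_u^H)du\to 0$ in $\widetilde P$-probability; taking $\widetilde E$-expectation and using that the Gaussian density of $\widetilde B_u^H$ is bounded by $C u^{-dH}$ yields the bound $C\|b_n-b\|_{L_\infty^1}\int_0^T u^{-dH}du$, which vanishes as $n\to\infty$ provided $dH<1$ (comfortably ensured by $H<\frac{1}{2(3d-1)}$). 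For $\xi_T(b_n)-\xi_T(b)$, Lemma \ref{VI_novikov} supplies uniform $L^p(\widetilde P)$-bounds on $\xi_T(b_n)$, so only convergence in probability has to be established; via the representation \eqref{VI_inverseKH} of $K_H^{-1}$ and a Cauchy--Schwarz bound, this in turn reduces to controlling
\begin{equation*}
\widetilde E\int_0^T r^{1/2-H}|(b_n-b)(r,x+\widetilde B_r^H)|^2\,dr \le CM\|b_n-b\|_{L_\infty^{1}}\int_0^T r^{1/2-H-dH}\,dr,
\end{equation*}
where I used $|b_n-b|\le 2M$ and the density estimate once more; the integral in $r$ converges under the theorem's hypothesis. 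Dominated convergence then closes the argument.

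The delicate step is this last one: coaxing the spatial $L^1$-convergence $\|b_n-b\|_{L_\infty^1}\to 0$ through the nonlocal operator $K_H^{-1}$ into $L^2([0,T])$-convergence of the integrands $\theta_n=K_H^{-1}(\int_0^{\cdot}b_n(r,x+\widetilde B_r^H)dr)$, and thereby into $L^1(\widetilde P)$-convergence of the Girsanov densities. This is only possible thanks to the smoothing effect of the Gaussian marginals of $\widetilde B^H$; everything else is a soft exchange of limits.
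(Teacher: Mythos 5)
Your proposal is correct and follows essentially the same route as the paper: dualising against a total family of $\mathcal{F}_t$-measurable functionals, using Girsanov plus weak uniqueness to rewrite both sides as expectations under the reference measure carrying the fractional Brownian motion, and passing to the limit via the Gaussian marginal density bound $Cr^{-dH}$ together with the uniform $L^p$-bounds on the Dol\'eans exponentials from Lemma \ref{VI_novikov}. The only cosmetic differences are your choice of bounded continuous cylindrical test functionals in place of the paper's exponentials of increments, and your direct $L^2([0,T]\times\Omega)$ estimate on $\theta_n-\theta$ instead of pointwise-in-$s$ convergence in probability plus $L^2$-boundedness.
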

\begin{proof}
For a moment let us just, without loss of generality, assume that $x=0$. First we show that
\begin{align} \label{VI_doleansDadeConvergence}
\mathcal{E}\left( \int_0^t K_H^{-1}\left(\int_0^{\cdot} b_n(r,B^H_r)dr\right)^{\ast}(s) dW_s\right)  \rightarrow  \mathcal{E}\left( \int_0^t K_H^{-1}\left(\int_0^{\cdot} b(r,B^H_r)dr\right)^{\ast}(s) dW_s\right)
\end{align}
in $L^p(\Omega)$ for all $p \geq 1$. To see this, note that 
$$
K_H^{-1}\left(\int_0^{\cdot} b_n(r,B^H_r)dr\right)(s) \rightarrow K_H^{-1}\left(\int_0^{\cdot} b(r,B^H_r)dr\right)(s)
$$
in probability for all $s$. Indeed, similar computations as in Lemma \ref{VI_novikov} give
\begin{align*}
\textrm{ E}\Bigg[\Big|& K_H^{-1}\left(\int_0^{\cdot}  b_n(r,B^H_r)dr\right)(s) -   K_H^{-1}\left(\int_0^{\cdot} b(r,B^H_r)dr\right)(s)\Big| \Bigg]
\\
\leq & \, \frac{s^{H- 1/2}}{\Gamma (\frac{1}{2} - H)} \int_0^s (s-r)^{-1/2 -H}r^{1/2 - H} \textrm{ E}[|b_n(r,B^H_r) - b(r,B^H_r)|] dr \\
=& \, \frac{s^{H- 1/2}}{\Gamma (\frac{1}{2} - H)} \int_0^s (s-r)^{-1/2 -H}r^{1/2 - H} \int_{\mathbb{R}^d} |b_n(r,y) - b(r,y)| (2\pi r^{2H})^{-d/2} \exp \left\{-\frac{y^2}{2 r^{2H}} \right\} dy dr  \rightarrow 0
\end{align*}
as $n \rightarrow \infty$ since $b_n(t,x)\rightarrow b(t,x)$ for a.e. $(t,x)$.

Moreover, $\left\{ K_H^{-1}(\int_0^{\cdot} b_n(r,B_r^H)dr) \right\}_{n \geq 0}$ is bounded in $L^2([0,t] \times \Omega; \mathbb{R}^d)$. This is directly seen from (\ref{VI_fracL2}) in Lemma \ref{VI_novikov}.

Consequently
$$
\int_0^t K_H^{-1}\left(\int_0^{\cdot} b_n(r,B_r^H)dr \right)^{\ast}(s) dW_s \rightarrow  \int_0^t K_H^{-1}\left(\int_0^{\cdot} b(r,B_r^H)dr\right)^{\ast}(s) dW_s
$$
and 
$$
\int_0^t \left|K_H^{-1}\left(\int_0^{\cdot} b_n(r,B_r^H)dr\right)(s)\right|^2 ds \rightarrow  \int_0^t \left|K_H^{-1}\left(\int_0^{\cdot} b(r,B_r^H)dr\right)(s)\right|^2 ds
$$
in $L^2(\Omega)$ since the latter is bounded $L^p(\Omega)$ for any $p \geq 1$, see Lemma \ref{VI_novikov}.

Using the estimate $|e^x - e^y| \leq e^{x+y} |x-y|$, H\"{o}lder's inequality and the bounds in Lemma \ref{VI_novikov} it is clear that (\ref{VI_doleansDadeConvergence}) holds.

Similarly, one also shows that 
$$
\exp\left\{ \left\langle \alpha , \int_s^t b_n(r,B_r^H)dr  \right\rangle \right\} \rightarrow \exp \left\{ \left\langle \alpha , \int_s^t b(r,B_r^H)dr \right\rangle   \right\} 
$$
in $L^p(\Omega)$ for all $p \geq 1$, $0 \leq s \leq t \leq T$, $\alpha \in \R^d$.

To conclude the proof we note that the set
$$
\Sigma_t := \left\{ \exp\{ \sum_{j=1}^k \langle \alpha_j, B^H_{t_j} - B^H_{t_{j-1}} \rangle \} : \{ \alpha_j \}_{j=1}^k \subset \mathbb{R}^d , 0= t_0 < \dots < t_k =t , k \geq 1 \right\}
$$
is a total subspace of $L^2(\Omega, \mathcal{F}_t, P)$ and we may thus restrict ourselves to show the convergence
$$
\lim_{n \rightarrow \infty} \textrm{ E}\left[ \left( \varphi(X_t^n) - \textrm{ E}[\varphi(X_t)|\mathcal{F}_t] \right) \xi \right] = 0
$$
for all $\xi \in \Sigma_t$. To this end, we notice that $\varphi$ is of linear growth and hence $\varphi(B_t^H)$ has all moments. Consequently we have the following convergence
$$
\textrm{ E}\left[  \varphi(X_t^n) \exp\left\{ \sum_{j=1}^k \langle \alpha_j , B^H_{t_j} - B^H_{t_{j-1}} \rangle \right\} \right]$$
$$= \textrm{ E}\left[  \varphi(X_t^n) \exp\left\{ \sum_{j=1}^k \langle \alpha_j , X^n_{t_j} - X^n_{t_{j-1}}  - \int_{t_{j-1}}^{t_j} b_n(s,X_s^n) ds \rangle \right\} \right]
$$
$$ 
= \textrm{ E}[  \varphi(B_t^H) \exp\{ \sum_{j=1}^k\langle \alpha_j , B^H_{t_j} - B^H_{t_{j-1}} -  \int_{t_{j-1}}^{t_j} b_n(s,B^H_s) ds \rangle\} \mathcal{E}\left(\int_0^t K_H^{-1} \left( \int_0^{\cdot}b_n(r,B^H_r)dr  \right)^\ast(s) dW_s \right)]
$$
$$
\rightarrow \textrm{ E}[  \varphi(B_t^H) \exp\{ \sum_{j=1}^k \langle \alpha_j , B^H_{t_j} - B^H_{t_{j-1}}- \int_{t_{j-1}}^{t_j} b(s,B^H_s) ds \rangle\} \mathcal{E}\left(\int_0^t K_H^{-1} \left( \int_0^{\cdot}b(r,B^H_r)dr  \right)^\ast(s) dW_s \right)]
$$
$$
=  \textrm{ E}[  \varphi(X_t) \exp\{ \sum_{j=1}^k \langle \alpha_j , B^H_{t_j} - B^H_{t_{j-1}} \rangle \} ]
$$
$$
=  \textrm{ E}[   \textrm{ E}[ \varphi(X_t) | \mathcal{F}_t]  \exp\{ \sum_{j=1}^k \langle \alpha_j ,  B^H_{t_j} - B^H_{t_{j-1}}\rangle \} ] .
$$
\end{proof}

We continue to proving the third step of our scheme. This is the most challenging part. The following result is based on a compactness criterion for subsets of $L^2(\Omega)$ which is summarised in the Appendix.

\begin{lem}\label{VI_relcomp}
Let $\{b_n\}_{n\geq 0} \subset C_c^{\infty} ([0,T]\times \R^d)$ an approximating sequence of $b$ in the sense of \eqref{approxb}. Fix $t\in [0,T]$ and denote by $X_t^{n}$ the corresponding solutions of \eqref{VI_SDE} if we replace $b$ by $b_n$, $n\geq 0$. Then there exists a $\beta\in (0,1/2)$ such that
$$\sup_{n\geq 0} \int_0^t \int_0^t \frac{ \textrm{\emph{E}}[\|D_\theta X_t^{n} - D_{\theta'} X_t^{n}\|^2]}{|\theta' - \theta|^{1+2\beta}} d\theta' d\theta \leq \sup_{n\geq 0}C_{H,d,T}(\|b_n\|_{L_{\infty}^{\infty}}, \|b_n\|_{L_{\infty}^{1}})<\infty$$
and
\begin{align}\label{VI_Mallest}
\sup_{n\geq 0} \|D_{\cdot} X_t^{n}\|_{L^2(\Omega\times [0,T],\R^{d\times d})} \leq \sup_{n\geq 0} C_{H,d,T}(\|b_n\|_{L_{\infty}^{\infty}}, \|b_n\|_{L_{\infty}^{1}})<\infty
\end{align}
for some continuous function $C_{H,d,T}:[0,\infty)^2 \rightarrow [0,\infty)$. Here, $\|\cdot\|$ denotes the maximum norm in $\R^{d\times d}$.
\end{lem}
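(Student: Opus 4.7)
The plan is to iterate the affine equation (\ref{VI_DXn1}) into an explicit Picard series for $D_\theta X^n_t$ and then apply the local time calculus developed in Propositions~\ref{VI_mainestimate1} and~\ref{VI_mainestimate2}. A naive Gronwall argument applied to (\ref{VI_DXn1}) would produce an $n$--dependent bound involving $\|b_n'\|_\infty$, which blows up, so the whole point is that the integration by parts formula (\ref{ibp}) lets us trade the $b_n'$ factors for spatial derivatives of a local--time kind whose bounds depend only on $\|b_n\|_{L_\infty^{1,\infty}}$. Concretely, since $b_n$ is smooth and bounded, iteration of (\ref{VI_DXn1}) gives
\begin{equation*}
D_\theta X_t^n \;=\; K_H(t,\theta)\, I_d \;+\; \sum_{m=1}^{\infty} \int_{\Delta_{\theta,t}^m} \left( \prod_{j=1}^m b_n'(s_j, X_{s_j}^n) \right) K_H(s_m,\theta)\, ds \cdot I_d ,
\end{equation*}
convergent in $L^2(\Omega)$. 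By Girsanov's Theorem~\ref{VI_girsanov} and the uniform $L^p$--bounds on the density supplied by Lemma~\ref{VI_novikov} (which only use $\|b_n\|_{L_\infty^\infty}\leq \|b\|_{L_\infty^\infty}$), after H\"{o}lder I may replace $X_{s_j}^n$ by $B_{s_j}^H$ in every expectation at the cost of a constant independent of $n$.

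To prove (\ref{VI_Mallest}) I square this series, take expectations, and open the double sum over $m,m'\geq 0$. The product $\Delta_{\theta,t}^m\times\Delta_{\theta,t}^{m'}$ is flattened into a single integral over $\Delta_{\theta,t}^{m+m'}$ via (\ref{shuffleIntegral}). The integrand now has exactly the product form $\prod_{j=1}^{m+m'} b_n'(\cdot,B^H_\cdot)\,\kappa_j(\cdot)$ required by Proposition~\ref{VI_mainestimate2}, with $k=1$ and with the kernel exponents $\varepsilon_j\in\{0,1\}$ equal to $1$ for exactly the two indices that carry the $K_H(\cdot,\theta)$ factor (and zero otherwise); the hypothesis $H<\frac{1}{2(3d-1)}$ is precisely what guarantees (\ref{VI_Hcond2}) with $k=1$ and $\sum\varepsilon_j=2$. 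Applying (\ref{VI_mainEst2}) each term is controlled by $C^{m+m'}\|b_n\|_{L_\infty^1}^{m+m'}$ divided by $\Gamma(2(m{+}m')(1-3dH)+\cdots)^{1/2}$; integrating in $\theta\in[0,t]$ and summing in $m,m'$ the Gamma factor beats the combinatorial prefactor and yields a bound depending only on $\|b_n\|_{L_\infty^{1,\infty}}$ and thus on $\|b\|_{L_\infty^{1,\infty}}$.

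For the Besov bound I take $\theta'<\theta$ and subtract the series for $D_\theta X_t^n$ and $D_{\theta'} X_t^n$; this produces (i) a main term carrying the factor $K_H(s_m,\theta)-K_H(s_m,\theta')$ in the last slot, and (ii) boundary terms coming from the differing lower limits of the simplices, which admit the same kind of expansion. Squaring, using shuffle and Girsanov as above, I now apply Proposition~\ref{VI_mainestimate1} with the same choice of $\varepsilon$ but with kernels $\kappa_j(s)=(K_H(s,\theta)-K_H(s,\theta'))^{\varepsilon_j}$. Estimate (\ref{VI_mainEst1}) then produces the H\"{o}lder factor $|\theta-\theta'|^{2\gamma\sum\varepsilon_j}=|\theta-\theta'|^{4\gamma}$ for any $\gamma\in(0,H)$, while preserving the $\Gamma^{1/2}$ denominator that makes the series convergent. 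Choosing $\beta\in(0,\gamma)$ makes
\begin{equation*}
\int_0^t\!\!\int_0^t \frac{E[\|D_\theta X_t^n - D_{\theta'} X_t^n\|^2]}{|\theta-\theta'|^{1+2\beta}}\,d\theta\,d\theta'
\end{equation*}
uniformly bounded in $n$ by a continuous function of $\|b\|_{L_\infty^{1,\infty}}$.

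The main obstacle is the combinatorial book--keeping required to merge the two simplex integrals arising from the square of an infinite Picard series and to match the resulting integrand exactly to the product structure demanded by Propositions~\ref{VI_mainestimate1}--\ref{VI_mainestimate2} (with the right $\varepsilon$ and $\kappa$); once that is done, the convergence of the double series in $(m,m')$ is forced by the super--exponentially decaying Gamma denominator in (\ref{VI_mainEst1})--(\ref{VI_mainEst2}), which is exactly the feature that prevents the bound from ever seeing $\|b_n'\|_\infty$ and keeps everything uniform in $n$.
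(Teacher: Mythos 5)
Your overall strategy is the paper's: iterate \eqref{VI_DXn1} into a Picard series, use Girsanov together with Lemma \ref{VI_novikov} to pass from $X^n$ to $x+B^H$ at a cost depending only on $\|b_n\|_{L_\infty^\infty}$, shuffle the resulting products of simplex integrals into single simplex integrals via \eqref{shuffleIntegral}, and invoke Propositions \ref{VI_mainestimate1}--\ref{VI_mainestimate2} with $k=1$ so that only $\|b_n\|_{L_\infty^{1,\infty}}$ survives and the Gamma denominator forces convergence of the series in $m$. The estimate \eqref{VI_Mallest} and the main term of the difference --- the one carrying $K_H(s_m,\theta)-K_H(s_m,\theta')$, the paper's $I_2^n$ --- are handled exactly as in the paper, modulo the bookkeeping that Girsanov plus Cauchy--Schwarz forces you to estimate the \emph{fourth} power of the simplex integral, so that \eqref{VI_mainEst1} is applied over $\Delta_{\theta,t}^{4m}$ with $\sum_j\varepsilon_j=4$ and yields $E[\|I_2^n\|^2]\leq C|\theta-\theta'|^{2\gamma}$.

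The gap is in your item (ii). The boundary terms coming from the differing lower limits of the simplices (the paper's $I_3^n$) carry the plain kernel $K_H(s_m,\theta')$, not the difference $K_H(s,\theta)-K_H(s,\theta')$, so Proposition \ref{VI_mainestimate1} with $\kappa_j=(K_H(\cdot,\theta)-K_H(\cdot,\theta'))^{\varepsilon_j}$ is not applicable to them and produces no factor $|\theta-\theta'|^{\gamma}$ there. For these terms the smallness in $|\theta-\theta'|$ must come from the domain of integration itself: they are integrals over $\Delta_{\theta',\theta}^m$, one applies Proposition \ref{VI_mainestimate2} on the short interval $[\theta',\theta]$, and the resulting exponent $m(1-3dH)+H-\tfrac12$ of $|\theta-\theta'|$ must be strictly positive for every $m\geq1$ (worst case $m=1$), which is precisely equivalent to $H<\tfrac{1}{2(3d-1)}$; this, rather than condition \eqref{VI_Hcond2} for the squared main series, is where the hypothesis of Theorem \ref{VI_mainthm} actually bites. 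Finally, the zeroth-order term $K_H(t,\theta')-K_H(t,\theta)$ is covered by neither proposition and requires the separate deterministic estimate of Lemma \ref{VI_doubleint}, namely that $\int_0^t\int_0^t|K_H(t,\theta')-K_H(t,\theta)|^2\,|\theta'-\theta|^{-1-2\beta}\,d\theta\,d\theta'<\infty$ for $\beta\in(0,1/2)$. With these two repairs your argument coincides with the paper's proof.
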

\begin{proof}
Fix $t\in [0,T]$ and take $\theta,\theta'>0$ such that $0<\theta'<\theta < t$. Using the chain rule for the Malliavin derivative, see \cite[Proposition 1.2.3]{Nua10}, we have
$$D_{\theta} X_t^{n} = K_H(t,\theta) I_{d} + \int_{\theta}^t b_n'(s,X_s^{n}) D_{\theta} X_s^{n} ds,$$
where the above equality is meant in the $L^p$-sense with respect to time. Here, $b_n'(s,z) = \left(\frac{\partial}{\partial z_j} b_n^{(i)} (s,z) \right)_{i,j=1,\dots, d}$ denotes the Jacobian matrix of $b_n$ and $I_d$ the identity matrix in $\R^{d\times d}$. Thus we have

\begin{align*}
D_{\theta'} X_t^{n} -& D_{\theta} X_t^{n} = K_H(t,\theta')I_d- K_H(t,\theta)I_d\\
&+\int_{\theta'}^t b_n'(s,X_s^{n}) D_{\theta'} X_s^{n} ds - \int_{\theta}^t b_n'(s,X_s^{n}) D_{\theta} X_s^{n} ds\\
=& K_H(t,\theta')I_d- K_H(t,\theta) I_d\\
&+\int_{\theta'}^{\theta} b_n'(s,X_s^{n}) D_{\theta'} X_s^{n} ds+\int_{\theta}^t  b_n'(s,X_s^{n}) (D_{\theta'} X_s^{n} - D_{\theta}X_s^{n}) ds\\
=& K_H(t,\theta')I_d - K_H(t,\theta) I_d + D_{\theta'}X_{\theta}^{n} - K_H (\theta,\theta')I_d  \\
&+ \int_{\theta}^t b_n'(s,X_s^{n})(D_{\theta'} X_s^{n} - D_{\theta}X_s^{n})ds.
\end{align*}
Using Picard iteration applied to the above equation we may write

\begin{align*}
D_{\theta'} X_t^{n} -& D_{\theta} X_t^{n} = K_H(t,\theta')I_d - K_H (t,\theta)I_d\\
&+ \sum_{m=1}^{\infty} \int_{\Delta_{\theta,t}^m} \prod_{j=1}^m  b_n'(s_j,X_{s_j}^{n}) \left(K_H(s_m,\theta')I_d - K_H (s_m,\theta)I_d\right) ds_m \cdots ds_1\\
&+ \left( I_d + \sum_{m=1}^{\infty} \int_{\Delta_{\theta,t}^m} \prod_{j=1}^m  b_n'(s_j,X_{s_j}^{n})ds_m \cdots ds_1 \right) \left(D_{\theta'} X_{\theta}^{n} - K_H(\theta,\theta') I_d\right).
\end{align*}
On the other hand, observe that one may again write
\begin{align*}
D_{\theta'} X_{\theta}^{n} - K_H(\theta,\theta')I_d = \sum_{m=1}^{\infty} \int_{\Delta_{\theta',\theta}^m} \prod_{j=1}^m  b_n'(s_j,X_{s_j}^{n}) (K_H(s_m,\theta') I_d) \, ds_m \cdots ds_1.
\end{align*}
Altogether, we can write
$$D_{\theta'} X_t^{n} - D_{\theta} X_t^{n} = I_1(\theta',\theta) + I_2^n (\theta',\theta)+ I_3^n (\theta',\theta),$$
where
\begin{align*}
I_1(\theta',\theta) :=&  K_H(t,\theta')I_d - K_H (t,\theta)I_d\\
I_2^n(\theta',\theta) :=& \sum_{m=1}^{\infty} \int_{\Delta_{\theta,t}^m} \prod_{j=1}^m  b_n'(s_j,X_{s_j}^{n}) \left( K_H(s_m,\theta')I_d - K_H (s_m,\theta)I_d \right) ds_m \cdots ds_1\\
I_3^n(\theta',\theta) :=&\left(I_d+ \sum_{m=1}^{\infty} \int_{\Delta_{\theta,t}^m} \prod_{j=1}^m  b_n'(s_j,X_{s_j}^{n})ds_m \cdots ds_1 \right)\\
&\times\left(\sum_{m=1}^{\infty} \int_{\Delta_{\theta',\theta}^m} \prod_{j=1}^m  b_n'(s_j,X_{s_j}^{n}) (K_H(s_m,\theta')I_d) ds_m \cdots ds_1.\right).
\end{align*}

It follows from Lemma \ref{VI_doubleint} that
\begin{align}\label{VI_double1}
\int_0^t \int_0^t \frac{\|I_1(\theta',\theta)\|_{L^2(\Omega)}^2}{|\theta'-\theta|^{1+2\beta}}d\theta d\theta' = \int_0^t \int_0^t \frac{|K_H(t,\theta')-K_H(t,\theta)|^2}{|\theta'-\theta|^{1+2\beta}}d\theta d\theta'<\infty
\end{align}
for a suitably small $\beta \in (0,1/2)$.

Let us continue with the term $I_2^n(\theta',\theta)$. Then Girsanov's theorem, Cauchy-Schwarz inequality and Lemma \ref{VI_novikov} imply
\begin{align*}
 \textrm{ E}[&\| I_2^n(\theta',\theta) \|^2]\\
&\leq \widetilde{C}(\|b_n\|_{L_{\infty}^{\infty}}) E\left[ \left\|\sum_{m=1}^{\infty} \int_{\Delta_{\theta,t}^m} \prod_{j=1}^m  b_n'(s_j,x+B_{s_j}^H) \left( K_H(s_m,\theta')I_d - K_H (s_m,\theta)I_d \right) ds_m \cdots ds_1\right\|^4 \right]^{1/2},
\end{align*}
where $\widetilde{C}:[0,\infty) \rightarrow [0,\infty)$ is the function from Lemma \ref{VI_novikov}. Taking the supremum over $n$ we have
$$\sup_{n\geq 0}\widetilde{C}(\|b_n\|_{L_{\infty}^{\infty}}) =: C_1 <\infty.$$

Then,

\begin{align*}
 \textrm{ E}[\| I_2^n(\theta',\theta) \|^2] \leq& C_1\Bigg(\sum_{m=1}^{\infty} \sum_{i,j=1}^d \sum_{l_1,\dots, l_{m-1}=1}^d \Bigg\|\int_{\Delta_{\theta,t}^m} \frac{\partial}{\partial x_{l_1}} b_n^{(i)} (s_1,x+B_{s_1}^H) \frac{\partial}{\partial x_{l_2}}b_n^{(l_1)} (s_2,x+B_{s_2}^H) \cdots\\
& \cdots \frac{\partial}{\partial x_j}b_n^{(l_{m-1})} (s_m,x+B_{s_m}^H) \left( K_H(s_m,\theta') - K_H (s_m,\theta) \right) ds_m \cdots ds_1\Bigg\|_{L^4(\Omega, \R)}\Bigg)^{2}  .
\end{align*}

Now look at the expression
\begin{align}\label{VI_I}
J_2^n(\theta',\theta) := \int_{\Delta_{\theta,t}^m} \frac{\partial}{\partial x_{l_1}} b_n^{(i)} (s_1,x+B_{s_1}^H) \cdots \frac{\partial}{\partial x_j}b_n^{(l_{m-1})} (s_m,x+B_{s_m}^H) \left( K_H(s_m,\theta') - K_H (s_m,\theta) \right) ds.
\end{align}
Then, shuffling $J_2^n(\theta',\theta)$ as shown in \eqref{shuffleIntegral}, one can write $(J_2^n(\theta',\theta))^2$ as a sum of at most $2^{2m}$ summands of length $2m$ of the form
\begin{align}\label{VI_II}
\int_{\Delta_{\theta,t}^{2m}} g_1^n (s_1,B_{s_1}^H) \cdots g_{2m}^n (s_{2m},B_{s_{2m}}^H) ds_{2m} \cdots ds_1,
\end{align}
where for each $l=1,\dots, 2m$,
$$g_l^n(\cdot, B_{\cdot}^H) \in \left\{ \frac{\partial}{\partial x_j} b_n^{(i)} (\cdot, x+B_{\cdot}^H),  \frac{\partial}{\partial x_j} b_n^{(i)} (\cdot, x+B_{\cdot}^H)\left( K_H(\cdot,\theta') - K_H (\cdot,\theta) \right), \, i,j=1,\dots,d\right\}.$$

Repeating this argument once again, we find that $J_2^n(\theta',\theta)^4$ can be expressed as a sum of, at most, $2^{8m}$ summands of length $4m$ of the form
\begin{align}\label{VI_III}
\int_{\Delta_{\theta,t}^{4m}} g_1^n (s_1,B_{s_1}^H) \cdots g_{4m}^n (s_{4m},B_{s_{4m}}^H) ds_{4m} \cdots ds_1,
\end{align}
where for each $l=1,\dots, 4m$,
$$g_l^n(\cdot, B_{\cdot}^H) \in \left\{ \frac{\partial}{\partial x_j} b_n^{(i)} (\cdot, x+B_{\cdot}^H),  \frac{\partial}{\partial x_j} b_n^{(i)} (\cdot, x+B_{\cdot}^H)\left( K_H(\cdot,\theta') - K_H (\cdot,\theta) \right), \, i,j=1,\dots,d\right\}.$$

It is important to note that the function $\left( K_H(\cdot,\theta') - K_H (\cdot,\theta) \right)$ appears only once in term \eqref{VI_I} and hence only four times in term \eqref{VI_III}. So there are indices $j_1,\dots, j_4 \in \{1,\dots, 4m\}$ such that we can write \eqref{VI_III} as
$$\int_{\Delta_{\theta,t}^{4m}} \left(\prod_{j=1}^{4m} b_j^n(s_j, B_{s_j}^H)\right)  \prod_{i=1}^4 \left( K_H(s_{j_i},\theta') - K_H (s_{j_i},\theta)\right) ds_{4m} \cdots ds_1,$$
where
$$b_l^n(\cdot, B_{\cdot}^H) \in \left\{ \frac{\partial}{\partial x_j} b_n^{(i)} (\cdot, x+B_{\cdot}^H), \, i,j=1,\dots,d\right\}, \quad l=1,\dots,4m.$$

The latter enables us to use the estimate from Proposition \ref{mainestimate1} with $\sum_{j=1}^{4m} \varepsilon_j=4$, $\sum_{l=1}^{d}\alpha _{\lbrack \sigma (j)]}^{(l)}=1$ for all $j$, $\left\vert \alpha \right\vert =4m$ and Remark \ref{Remark 3.4}. Therefore we get that
\begin{align*}
 \textrm{ E}(J_2^n(\theta',\theta))^4 \leq   \left(\frac{\theta-\theta'}{\theta \theta'}\right)^{4\gamma} \theta^{4\left(H-\frac{1}{2}-\gamma\right)} C^{4m}  \|b_n\|_{L_{\infty}^{1}}^{4m} A_m^{\gamma}(H,d, |t-\theta|) 
\end{align*}
whenever $H<\frac{1}{2(d+2)}$ and $\gamma\in (0,H)$, where
\begin{equation*} 
A_m^{\gamma }(H,d,\left\vert t-\theta \right\vert ):=\frac{((8m)!)^{1/4}(t-%
\theta )^{-H(4m(d+2))-4(H-\frac{1}{2}-\gamma )+4m}}{\Gamma (-H(d+2)8m+8(H-%
\frac{1}{2}-\gamma )+8m)^{1/2}}. \label{Agamma}
\end{equation*}
Altogether, we see that
\begin{align*}
 \textrm{ E}\left[ \|I_2^n (\theta',\theta)\|^2 \right] \leq \left(\frac{\theta-\theta'}{\theta \theta'}\right)^{2\gamma} \theta^{2\left(H-\frac{1}{2}-\gamma\right)} \left(\sum_{m=1}^\infty d^{m+1} C^m \|b_n\|_{L_{\infty}^1}^m A_m^{\gamma}(H,d,|T|)^{1/4}   \right)^2.
\end{align*}

Since $H<\frac{1}{2(d+2)}$, we see that the latter sum is convergent.
Hence, we can find a continuous function $C_{H,d,T}:[0,\infty)^2 \rightarrow [0,\infty)$ such that
\begin{align*}
\sup_{n\geq 0} \textrm{ E}\left[ \|I_2^n (\theta',\theta)\|^2 \right] \leq \sup_{n\geq 0} C_{H,d,T}(\|b_n\|_{L_{\infty}^{\infty}}, \|b_n\|_{L_{\infty}^{1}})\left(\frac{\theta-\theta'}{\theta \theta'}\right)^{2\gamma} \theta^{2\left(H-\frac{1}{2}-\gamma\right)}
\end{align*}
for $\gamma\in (0,H)$ provided that $H<\frac{1}{2(2+d)}$. It is easy to see that we can choose $\gamma \in (0,H)$ such that there is a suitably small $\beta\in (0,1/2)$, $0<\beta< \gamma<H<1/2$ so that it follows from Lemma \ref{VI_doubleint} that
\begin{align}\label{VI_double2}
\int_0^t \int_0^t \left|\frac{\theta-\theta'}{\theta \theta'}\right|^{2\gamma} |\theta|^{2\left(H-\frac{1}{2}-\gamma\right)} |\theta-\theta'|^{-1-2\beta} d\theta' d\theta <\infty,
\end{align}
for every $t\in (0,T]$.

We now turn to the term $I_3^n(\theta',\theta)$. Observe that term $I_3^n (\theta',\theta)$ is the product of two terms, where the first one will simply be bounded uniformly in $\theta,t \in [0,T]$ under expectation. This can be shown by following meticulously the same steps as we did for $I_2^n(\theta',\theta)$ and observing that in virtue of Proposition \ref{mainestimate2} with $\varepsilon_j = 0$ for all $j$ the singularity in $\theta$ vanishes.

Again Girsanov's theorem, Cauchy-Schwarz inequality several times and Lemma \ref{VI_novikov} lead to
\begin{align*}
 \textrm{ E}[\|I_3^n(\theta',\theta)\|^2] \leq& \widehat{C}(\|b_n\|_{L_{\infty}^{\infty}}) \left\|I_d+ \sum_{m=1}^{\infty} \int_{\Delta_{\theta,t}^m} \prod_{j=1}^m  b_n'(s_j,x+B_{s_j}^H)ds_m \cdots ds_1 \right\|_{L^8(\Omega, \R^{d\times d}) }^2 \\
&\times \left\| \sum_{m=1}^{\infty} \int_{\Delta_{\theta',\theta}^m} \prod_{j=1}^m  b_n'(s_j,x+B_{s_j}^H) K_H(s_m,\theta') ds_m \cdots ds_1\right\|_{L^4(\Omega, \R^{d\times d}) }^2,
\end{align*}
where $\widehat{C}:[0,\infty)\rightarrow [0,\infty)$ denotes the corresponding function obtained from Lemma \ref{VI_novikov} which satisfies
$$\sup_{n\geq 0} \widehat{C}(\|b_n\|_{L_{\infty}^{\infty}})=: C_2<\infty.$$

Again, we have
\begin{align*}
 \textrm{ E} [\|I_3^n(\theta',\theta)\|^2] \leq & C_2 \Bigg(1+ \sum_{m=1}^{\infty} \sum_{i,j=1}^d \sum_{l_1,\dots, l_{m-1}=1}^d \Bigg\|\int_{\Delta_{\theta,t}^m} \frac{\partial}{\partial x_{l_1}} b_n^{(i)}(s_1,x+B_{s_1}^H) \cdots\\
&\cdots  \frac{\partial}{\partial x_j} b_n^{(l_{m-1})} (s_m,x+B_{s_m}^H) ds_m \cdots ds_1 \Bigg\|_{L^8(\Omega, \R)}  \Bigg)^2\\
&\times \Bigg( \sum_{m=1}^{\infty}\sum_{i,j=1}^d \sum_{l_1,\dots, l_{m-1}=1}^d \Bigg\|\int_{\Delta_{\theta',\theta}^m} \frac{\partial}{\partial x_{l_1}} b_n^{(i)}(s_1,x+B_{s_1}^H) \cdots \\
&\cdots  \frac{\partial}{\partial x_j} b_n^{(l_{m-1})} (s_m,x+B_{s_m}^H) K_H(s_m,\theta') ds_m \cdots ds_1\Bigg\|_{L^4(\Omega, \R)} \Bigg)^2.
\end{align*}

Using exactly the same reasoning as for $I_2^n(\theta',\theta)$ we see that the first factor can be bounded by some finite constant $C_3 (\|b_n\|_{L_{\infty}^{1,\infty}})$ depending on $H$, $d$, $T$ and $\|b_n\|_{L_{\infty}^{1,\infty}}$, i.e.
\begin{align*}
 \textrm{ E}[\|I_3^n(\theta',\theta)\|^2] \leq& C_3(\|b_n\|_{L_{\infty}^{1,\infty}}) \Bigg( \sum_{m=1}^{\infty}\sum_{i,j=1}^d \sum_{l_1,\dots, l_{m-1}=1}^d \Bigg\|\int_{\Delta_{\theta',\theta}^m} \frac{\partial}{\partial x_{l_1}} b_n^{(i)}(s_1,x+B_{s_1}^H) \cdots\\
&\cdots \frac{\partial}{\partial x_j} b_n^{(l_{m-1})} (s_m,x+B_{s_m}^H) K_H(s_m,\theta') ds_m \cdots ds_1\Bigg\|_{L^4(\Omega, \R)} \Bigg)^2.
\end{align*}
As before, look at
\begin{align}\label{VI_IV}
J_3^n(\theta',\theta) := \int_{\Delta_{\theta',\theta}^m} \frac{\partial}{\partial x_{l_1}} b_n^{(i)} (s_1,x+B_{s_1}^H) \cdots \frac{\partial}{\partial x_j}b_n^{(l_{m-1})} (s_m,x+B_{s_m}^H) K_H(s_m,\theta') ds_m \cdots ds_1.
\end{align}

We can express $(J_3^n(\theta',\theta))^4$ as a sum of, at most, $2^{8m}$ summands of length $4m$ of the form
\begin{align}\label{VI_V}
\int_{\Delta_{\theta',\theta}^{4m}} g_1^n (s_1,B_{s_1}^H) \cdots g_{4m}^n (s_{4m},B_{s_{4m}}^H) ds_{4m} \cdots ds_1,
\end{align}
where for each $l=1,\dots, 4m$,
$$g_l^n(\cdot, B_{\cdot}^H) \in \left\{ \frac{\partial}{\partial x_j} b_n^{(i)} (\cdot, x+B_{\cdot}^H),  \frac{\partial}{\partial x_j} b_n^{(i)} (\cdot, x+B_{\cdot}^H) K_H(\cdot,\theta'), \, i,j=1,\dots,d\right\},$$
where the factor $K_H(\cdot,\theta')$ is repeated four times in the integrand of \eqref{VI_V}. Now we can simply apply Proposition \ref{mainestimate2} with $\sum_{j=1}^{4m}\varepsilon_j=4$, $\sum_{l=1}^{d}\alpha _{\lbrack \sigma (j)]}^{(l)}=1$ for all $j$, $\left\vert \alpha \right\vert =4m$ and Remark \ref{Remark 3.4} and obtain that
$$ \textrm{ E}[(J_3^n(\theta',\theta))^4] \leq \theta^{4\left(H-\frac{1}{2}\right)} C^{4m} \|b_n\|_{L_{\infty}^{1}}^{4m} A_m^{0}(H,d, |\theta-\theta'|),$$
whenever $H<\frac{1}{2(2+d)}$ where $A_m^{0}(H,d, |\theta-\theta'|)$ is defined as in \eqref{Agamma} by inserting $\gamma =0$.

As a result,
$$ \textrm{ E}[\|I_3^n(\theta',\theta)\|^2] \leq \theta^{2\left(H-\frac{1}{2}\right)}\left(\sum_{m=1}^\infty d^{m+1} C^m  \|b_n\|_{L_{\infty}^{1}}^{m} A_m^0(H,d,|\theta-\theta'|)^{1/4}\right)^2.$$
Note that the above series converges, because $H<\frac{1}{2(2+d)}$.

Since the exponent of $|\theta-\theta'|$ appearing in $A_m^0(b_n,H,d,|\theta-\theta'|)$ is strictly positive by assumption, we can find a small enough $\varepsilon>0$ and a continuous function $C_{H,d,T}:[0,\infty)^2 \rightarrow [0,\infty)$ such that
$$\sup_{n\geq 0} \textrm{ E}[\|I_3^n(\theta',\theta)\|^2] \leq \sup_{n\geq 0} C_{H,d,T}(\|b_n\|_{L_{\infty}^{\infty}},\|b_n\|_{L_{\infty}^{1}}) |\theta|^{2\left(H-\frac{1}{2}\right)}|\theta - \theta'|^{\varepsilon}$$
provided $H<\frac{1}{2(2+d)}$. Then again, it is easy to see that we can choose $\beta\in(0,1/2)$ small enough so that it follows from Lemma \ref{VI_doubleint} that
\begin{align}\label{VI_double3}
\int_0^t \int_0^t |\theta|^{2\left(H-\frac{1}{2}\right)}|\theta - \theta'|^{\varepsilon -1 -2\beta} d\theta' d\theta <\infty,
\end{align}
for every $t\in [0,T]$.

Altogether, taking a suitable $\beta$ so that \eqref{VI_double1}, \eqref{VI_double2} and \eqref{VI_double3} are finite, we have
$$\sup_{n\geq 0} \int_0^t \int_0^t \frac{ \textrm{ E}[\| D_{\theta'} X_t^n - D_{\theta} X_t^n \|^2]}{|\theta' - \theta|^{1+2\beta}} d\theta' d\theta \leq \sup_{n \geq 0} C_{H,d,T}(\|b_n\|_{L_{\infty}^{\infty}},\|b_n\|_{L_{\infty}^{1}}) < \infty$$
for some continuous function $C_{H,d,T}:[0,\infty)^2 \rightarrow [0,\infty)$.

Similar computations show that
$$\sup_{n\geq 0} \|D_{\cdot} X_t^n \|_{L^2(\Omega\times [0,T],\R^{d\times d})} \leq \sup_{n \geq 0} C_{H,d,T}(\|b_n\|_{L_{\infty}^{\infty}}, \|b_n\|_{L_{\infty}^{1}}) < \infty.$$
\end{proof}

\begin{cor}\label{VI_L2conv}
Let $\{b_n\}_{n\geq 0} \subset C_c^{\infty} ([0,T]\times \R^d)$ the approximating sequence of $b$ in the sense of \eqref{VI_Xn}. Denote by $X_t^{n}$ the corresponding solutions of \eqref{approxb} if we replace $b$ by $b_n$, $n\geq 0$. Then for every $t\in [0,T]$ and bounded continuous function $\varphi:\R^d \rightarrow \R$ we have
$$\varphi(X_t^{n}) \xrightarrow{n \to \infty} \varphi( \textrm{\emph{E}}\left[ X_t |\mathcal{F}_t \right])$$
strongly in $L^2(\Omega; \mathcal{F}_t)$. In addition, $ \textrm{\emph{E}}\left[X_t|\mathcal{F}_t\right]$ is Malliavin differentiable for every $t\in [0,T]$.
\end{cor}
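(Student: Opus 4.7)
The plan is to upgrade the weak $L^2$-convergence from Lemma~\ref{VI_weakconv} to strong convergence by using that, by the Malliavin calculus compactness criterion of \cite{DPMN92} summarised in the Appendix, the uniform bounds in Lemma~\ref{VI_relcomp} force $\{X_t^n\}_n$ to be relatively compact in $L^2(\Omega;\R^d)$. Concretely, the two bounds
$$\sup_n \int_0^t\int_0^t \frac{E[\|D_\theta X_t^n - D_{\theta'} X_t^n\|^2]}{|\theta'-\theta|^{1+2\beta}}d\theta' d\theta < \infty, \qquad \sup_n \|D_\cdot X_t^n\|_{L^2(\Omega\times[0,T])}<\infty,$$
combined with the uniform $L^2(\Omega)$-boundedness of $X_t^n$ (immediate from $|b_n|\leq M$ and the representation $X_t^n = x+\int_0^t b_n(s,X_s^n)ds+B_t^H$), verify the hypotheses of the criterion componentwise and so yield relative compactness of $\{X_t^n\}_{n}$ in $L^2(\Omega;\R^d)$.

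To identify the limit, I would take any subsequence $\{X_t^{n_k}\}$ converging strongly in $L^2(\Omega;\R^d)$ to some $Y$. Since $L^2(\Omega,\mathcal{F}_t,P)$ is a closed subspace of $L^2(\Omega,\mathfrak{A},P)$ and each $X_t^{n_k}$ is $\mathcal{F}_t$-measurable, $Y$ is automatically $\mathcal{F}_t$-measurable. For any bounded continuous $\varphi$, dominated convergence gives $\varphi(X_t^{n_k})\to \varphi(Y)$ in $L^2(\Omega)$, while Lemma~\ref{VI_weakconv} gives the weak convergence $\varphi(X_t^{n_k})\to E[\varphi(X_t)|\mathcal{F}_t]$; uniqueness of limits forces $\varphi(Y)=E[\varphi(X_t)|\mathcal{F}_t]$ $P$-a.s.\ for every such $\varphi$. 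Applying this with bounded smooth cutoffs of the coordinate projections and passing to the limit by the uniform integrability coming from the $L^2$-bounds then yields $Y=E[X_t|\mathcal{F}_t]$. Because the limit is the same along every strongly convergent subsequence, the full sequence $X_t^n$ converges to $E[X_t|\mathcal{F}_t]$ strongly in $L^2(\Omega;\R^d)$, and then $\varphi(X_t^n)\to\varphi(E[X_t|\mathcal{F}_t])$ in $L^2(\Omega;\mathcal{F}_t)$ follows by one further application of continuity and boundedness.

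For the Malliavin differentiability of $E[X_t|\mathcal{F}_t]$, I would invoke closedness of the operator $D\colon \mathbb{D}^{1,2}\subset L^2(\Omega)\to L^2(\Omega\times[0,T];\R^d)$. The uniform bound \eqref{VI_Mallest} produces, along a subsequence, a weak limit $\eta\in L^2(\Omega\times[0,T];\R^d)$ of $DX_t^n$; coupled with the strong convergence $X_t^n\to E[X_t|\mathcal{F}_t]$ in $L^2(\Omega)$, Mazur's lemma and the closed-graph property of $D$ identify $\eta = DE[X_t|\mathcal{F}_t]$, so that $E[X_t|\mathcal{F}_t]\in \mathbb{D}^{1,2}$. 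The bulk of the work has already been carried out in Lemma~\ref{VI_relcomp}; the only genuinely delicate point in this corollary is the passage from the weak $L^2$-convergence of Lemma~\ref{VI_weakconv}, which is stated only for bounded $\varphi$, back to strong convergence of the (unbounded) random variables $X_t^n$ themselves, and it is precisely the compactness criterion that makes this possible.
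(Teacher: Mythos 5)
Your proposal is correct and follows essentially the same route as the paper: relative compactness of $\{X_t^n\}_n$ in $L^2(\Omega)$ from the bounds of Lemma \ref{VI_relcomp} via the criterion of Corollary \ref{VI_compactcrit}, identification of the strong limit with $E[X_t|\mathcal{F}_t]$ through the weak convergence of Lemma \ref{VI_weakconv} (first for bounded continuous $\varphi$, then for the identity by truncation), and Malliavin differentiability from the uniform bound \eqref{VI_Mallest} together with the closedness of $D$ (the paper cites \cite[Proposition 1.2.3]{Nua10} for exactly this). Your write-up is in fact more detailed than the paper's one-line proof, and correctly handles the only delicate point, namely that Lemma \ref{VI_weakconv} yields $E[\varphi(X_t)|\mathcal{F}_t]$ rather than $\varphi(E[X_t|\mathcal{F}_t])$ as the weak limit, so the two must be reconciled through the strong subsequential limit $Y$.
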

\begin{proof}
This is an immediate consequence of the relative compactness from Corollary \ref{VI_compactcrit} in connection with Lemma \ref{VI_relcomp} and because of Lemma \ref{VI_weakconv} we can identify the limit as being $ \textrm{E}[X_t|\mathcal{F}_t]$, then the convergence holds for any bounded continuous function as well. The Malliavin differentiability of $\textrm{E}[X_t|\mathcal{F}_t]$ is shown by taking $\varphi =I_d$ and estimate \eqref{VI_Mallest} together with \cite[Proposition 1.2.3]{Nua10}.
\end{proof}

Finally, we can prove the main result of this section.

\begin{proof}[Proof of Theorem \ref{VI_mainthm}]
It remains to prove that $X_t$ is $\mathcal{F}_t$-measurable for every $t\in [0,T]$ and by Remark \ref{VI_stochbasisrmk} it then follows that there exists a strong solution in the usual sense that is Malliavin differentiable. Indeed, let $\varphi$ be a globally Lipschitz continuous function, then by Corollary \ref{VI_L2conv} we have, for a subsequence $n_k$, $k\geq 0$, that
$$\varphi(X_t^{n_k}) \rightarrow \varphi(\textrm{E}[X_t|\mathcal{F}_t]), \ \ P-a.s.$$
as $k\to \infty$.

On the other hand, by Lemma \ref{VI_weakconv} we also have
$$\varphi (X_t^{n}) \rightarrow \textrm{E}\left[ \varphi(X_t)|\mathcal{F}_t\right]$$
weakly in $L^2(\Omega;\mathcal{F}_t)$. By the uniqueness of the limit we immediately have
$$\varphi\left( \textrm{E}[X_t|\mathcal{F}_t] \right) =\textrm{E}\left[ \varphi(X_t)|\mathcal{F}_t\right], \ \ P-a.s.$$
which implies that $X_t$ is $\mathcal{F}_t$-measurable for every $t\in [0,T]$.

Finally, to show uniqueness it is enough to show that two given strong solutions are weakly unique, indeed, one can follow the same argument as in \cite[Chapter IX, Exercise (1.20)]{RY2004} which asserts that strong existence and uniqueness in law imply pathwise uniqueness. The argument does not rely on the process being a semimartingale. Since our solutions are, by construction, strong and uniqueness in law follows from Novikov's condition from Lemma \ref{VI_novikov} replacing $B^H$ by $X$ then pathwise uniqueness follows.
\end{proof}

\section{Stochastic flows and regularity properties}\label{Flow}

Henceforward, we will denote by $X_t^{s,x}$ the solution to the following SDE driven by a fractional Brownian motion with $H<1/2$
\begin{align}\label{VI_SDE2}
dX_t^{s,x} =b(t,X_t^{s,x}) dt + dB_t^H, \quad s,t\in [0,T], \quad s\leq t, \quad X_s^{s,x} = x\in \R^d.
\end{align}

We will then assume the hypotheses from Theorem \ref{VI_mainthm} on $b$ and $H$, that is $b\in L_{\infty,\infty}^{1,\infty}$ and $H<\frac{1}{2(d+2)}$. The next result tells us that if $H=H(k)$ is small enough we may gain regularity on $x\mapsto X_t^{s,x}$. In particular, it shows that the strong solution constructed in the former section, in addition to being Malliavin differentiable, is also once weakly differentiable with respect to $x$ since $k=1$. See the authors in \cite{CHOP}, who treated the case $k=2$. 

\begin{thm}\label{VI_derivative}
Let $b \in C_c^{\infty}([0,T]\times \R^{d})$. Fix integers $p\geq 2$ and $k\geq 1$. Then, if $H<\frac{1}{(d-1+2k)}$ we have
$$\sup_{s,t\in [0,T]} \sup_{x\in \R^d} \textrm{\emph{E}}\left[ \left\| \frac{\partial^{k}}{\partial x^{k}} X_t^{s,x}\right\|^{p} \right] \leq C_{k,d,H,p,T} (\|b\|_{L_{\infty}^{\infty}},\|b\|_{L_{\infty}^{1}}),$$
where $C_{k,d,H,p,T}:[0,\infty)^2\rightarrow [0,\infty)$ is a continuous function, depending on $k,d,H,p$ and $T$.
\end{thm}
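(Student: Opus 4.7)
The plan is to expand $\partial_x^k X_t^{s,x}$ as a series of iterated simplex integrals, estimate each term by combining Girsanov's theorem with the key ``local time'' integration by parts estimate from Proposition \ref{VI_mainestimate1} (applied with all weights $\varepsilon_j=0$), and sum the resulting factorial bounds. Since $b$ is smooth, the flow is classically $C^k$ in $x$ and all differentiations below are licit; the point of the proof is to obtain an $L^p$-bound which depends only on $\|b\|_{L^{1,\infty}_\infty}$ and not on derivatives of $b$.

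First I would differentiate the fixed-point equation
$X_t^{s,x}=x+\int_s^t b(r,X_r^{s,x})\,dr + B_t^H-B_s^H$
exactly $k$ times in $x$. By Fa\`a di Bruno, the variables $Y_t^{(j)}:=\partial_x^j X_t^{s,x}$, $j=1,\dots,k$, satisfy a triangular system of linear integral equations whose inhomogeneity at level $j$ is a polynomial in $(Y^{(i)})_{i<j}$ with coefficients of the form $D^\gamma b(r,X_r^{s,x})$, $|\gamma|\le j$. Picard iteration on each level, combined with successive substitution of the series for the lower-order $Y^{(i)}$, allows me to write
\[
\partial_x^k X_t^{s,x}
=\sum_{m\ge k}\sum_{\tau\in\mathcal{T}_{m,k}}
\int_{\Delta_{s,t}^m}\prod_{j=1}^m D^{\beta_j(\tau)}b(s_j,X_{s_j}^{s,x})\,ds,
\]
where, after repeated use of the shuffle identity \eqref{shuffleIntegral} and Lemma \ref{partialshuffle}, every term is collected as a single integral over one simplex $\Delta_{s,t}^m$, each multi-index satisfies $|\beta_j(\tau)|\le k$, and the index set $\mathcal{T}_{m,k}$ of ``tree-shapes'' at level $m$ has cardinality bounded by $C_k^{\,m}$ for some constant depending only on $k$.

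Second I would estimate $E[\|J_{m,\tau}\|^p]$, where $J_{m,\tau}$ is the generic term of the expansion. Raising to the $p$-th power and shuffling the $p$ simplex factors yields, up to a combinatorial multiplier of at most $p^{pm}$, a single integral over $\Delta_{s,t}^{pm}$ of a product of derivatives of $b$ of order at most $k$, evaluated along $X^{s,x}$. Then I apply Girsanov's theorem together with H\"older's inequality in the same spirit as in the proof of Lemma \ref{VI_relcomp}: the Dol\'eans--Dade density is controlled in every $L^q$ by a continuous function of $\|b\|_{L^\infty_\infty}$ via Lemma \ref{VI_novikov}, so the expectation reduces, at the cost of moving to a higher exponent, to the same functional computed along $x+B_{\cdot}^H-B_s^H$. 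Proposition \ref{VI_mainestimate1}, applied with all $\varepsilon_j=0$ (so that the condition \eqref{VI_Hcond} becomes precisely $H<\frac{1}{d(2k+1)}$, which matches the hypothesis of the theorem), gives the bound
\[
\Bigl|E\!\int_{\Delta_{s,t}^{pm}}\prod_{j=1}^{pm}D^{\beta_j}b(s_j,x+B_{s_j}^H-B_s^H)\,ds\Bigr|
\le C^{pm}\|b\|_{L^1_\infty}^{\,pm}\,\frac{|t-s|^{pm(1-dH(2k+1))}}{\Gamma\!\bigl(2pm(1-dH(2k+1))+1\bigr)^{1/2}}.
\]
All constants are uniform in $s,t\in[0,T]$ and in $x\in\R^d$.

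Finally I would sum: by Minkowski, the $L^p$-norm of $\partial_x^k X_t^{s,x}$ is bounded by the $\ell^1$-sum over $(m,\tau)$ of the $L^p$-norms of the individual terms. Combining the combinatorial factor $|\mathcal{T}_{m,k}|\,p^{pm}\le (C_k\,p^p)^m$ with the estimate above produces a series whose general term decays like $\Gamma(2pm(1-dH(2k+1))+1)^{-1/(2p)}$, hence is absolutely convergent for $H<\frac{1}{d(2k+1)}$, with sum bounded by a continuous function of $\|b\|_{L^\infty_\infty}$ and $\|b\|_{L^1_\infty}$ only. The bound is uniform in $s,t\in[0,T]$ and $x\in\R^d$, which is exactly the conclusion. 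The main obstacle is the first step: organising the Fa\`a di Bruno expansion after the nested Picard substitutions so that (i) each resulting term sits on a single simplex with integrand a product of single-variable functions, a structure required by Proposition \ref{VI_mainestimate1}, and (ii) the number of terms at level $m$ grows only exponentially in $m$, so that it is crushed by the $\Gamma$-factorial decay. The shuffle machinery of Section \ref{VI_shuffles}, and in particular Lemma \ref{partialshuffle}, is what makes this reorganisation possible.
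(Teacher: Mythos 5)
Your plan follows essentially the same route as the paper: iterated differentiation of the Picard series, collapsing the nested simplices via Lemma \ref{partialshuffle}, Girsanov plus Lemma \ref{VI_novikov}, and then the key estimate with all $\varepsilon_j=0$ (your Proposition \ref{VI_mainestimate1} with $\varepsilon\equiv 0$ coincides with the paper's choice of Proposition \ref{VI_mainestimate2} with $\kappa\equiv 1$), summed against the $\Gamma$-decay. The only cosmetic difference is that the paper raises each term to an even power $2^q=sp$ and uses H\"older, rather than to the $p$-th power directly, so that the shuffled expansion controls $E[|J|^{p}]$ also for odd $p$; with that small adjustment your argument is the paper's proof.
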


\begin{proof}
For notational convenience, let us assume that $s=0$ and denote the
corresponding solution by $X_{t}^{x},$ $0\leq t\leq T$ with respect to the
vector field $b\in C_{c}^{\infty }((0,T)\times \mathbb{R}^{d})$. Since the
stochastic flow associated with the smooth vector field $b$ is smooth, too
(compare to e.g. \cite{Kunita}), we find that%
\begin{equation}
\frac{\partial }{\partial x}X_{t}^{x}=I_{d\times
d}+\int_{s}^{t}Db(u,X_{u}^{x})\cdot \frac{\partial }{\partial x}X_{u}^{x}du,
\end{equation}%
where $Db(u,\cdot):\mathbb{R}^{d}\longrightarrow L(\mathbb{R}^{d},\mathbb{R}^{d})$ is
the derivative of $b$ with respect to the space variable.

By employing Picard iteration, we obtain that%
\begin{equation}
\frac{\partial }{\partial x}X_{t}^{x}=I_{d\times d}+\sum_{m\geq
1}\int_{\Delta
_{0,t}^{m}}Db(u,X_{u_{1}}^{x})...Db(u,X_{u_{m}}^{x})du_{m}...du_{1},
\label{FirstOrder}
\end{equation}%
where%
\begin{equation*}
\Delta _{s,t}^{m}=\{(u_{m},...u_{1})\in \lbrack 0,T]^{m}:\theta
<u_{m}<...<u_{1}<t\}.
\end{equation*}

Using dominated convergence, we can differentiate both sides with respect to 
$x$ and see that%
\begin{equation*}
\frac{\partial ^{2}}{\partial x^{2}}X_{t}^{x}=\sum_{m\geq 1}\int_{\Delta
_{0,t}^{m}}\frac{\partial }{\partial x}%
[Db(u,X_{u_{1}}^{x})...Db(u,X_{u_{m}}^{x})]du_{m}...du_{1}.
\end{equation*}%
On the other hand the Leibniz and chain rule give%
\begin{eqnarray*}
&&\frac{\partial }{\partial x}[Db(u_{1},X_{u_{1}}^{x})...Db(u_m,X_{u_{m}}^{x})] \\
&=&\sum_{r=1}^{m}Db(u_1,X_{u_{1}}^{x})...D^{2}b(u_r,X_{u_{r}}^{x})\frac{\partial }{%
\partial x}X_{u_{r}}^{x}...Db(u_m,X_{u_{m}}^{x}),
\end{eqnarray*}%
where $D^{2}b(u,\cdot)=D(Db(u,\cdot)):\mathbb{R}^{d}\longrightarrow L(\mathbb{R}^{d},L(\mathbb{%
R}^{d},\mathbb{R}^{d}))$.

So (\ref{FirstOrder}) implies that%
\begin{eqnarray}
\frac{\partial ^{2}}{\partial x^{2}}X_{t}^{x} &=&\sum_{m_{1}\geq
1}\int_{\Delta
_{0,t}^{m_{1}}}\sum_{r=1}^{m_{1}}Db(u_1,X_{u_{1}}^{x})...D^{2}b(u_r,X_{u_{r}}^{x}) 
\notag \\
&&\times \left( I_{d\times d}+\sum_{m_{2}\geq 1}\int_{\Delta
_{0,u_{r}}^{m_{2}}}Db(v_1,X_{v_{1}}^{x})...Db(v_{m_2},X_{v_{m_{2}}}^{x})dv_{m_{2}}...dv_{1}\right) 
\notag \\
&&\times Db(u_{r+1},X_{u_{r+1}}^{x})...Db(u_{m_1},X_{u_{m_{1}}}^{x})du_{m_{1}}...du_{1} 
\notag \\
&=&\sum_{m_{1}\geq 1}\sum_{r=1}^{m_{1}}\int_{\Delta
_{0,t}^{m_{1}}}Db(u_1, X_{u_{1}}^{x})...D^{2}b(u_r,X_{u_{r}}^{x})...Db(u_{m_1},X_{u_{m_{1}}}^{x})du_{m_{1}}...du_{1}
\notag \\
&&+\sum_{m_{1}\geq 1}\sum_{r=1}^{m_{1}}\sum_{m_{2}\geq 1}\int_{\Delta
_{0,t}^{m_{1}}}\int_{\Delta
_{0,u_{r}}^{m_{2}}}Db(u_1,X_{u_{1}}^{x})...D^{2}b(u_r,X_{u_{r}}^{x})  \notag \\
&&\times
Db(v_1, X_{v_{1}}^{x})...Db(v_{m_2}X_{v_{m_{2}}}^{x})Db(u_{r+1},X_{u_{r+1}}^{x})...Db(u_{m_1},X_{u_{m_{1}}}^{x})
\notag \\
&&dv_{m_{2}}...dv_{1}du_{m_{1}}...du_{1}  \notag \\
&=&:I_{1}+I_{2}.  \label{SecondOrder}
\end{eqnarray}

Next we apply Lemma \ref{OrderDerivatives} (in connection with Lemma \ref%
{partialshuffle}) to the term $I_{2}$ in (\ref{SecondOrder}) and obtain that%
\begin{equation}
I_{2}=\sum_{m_{1}\geq 1}\sum_{r=1}^{m_{1}}\sum_{m_{2}\geq 1}\int_{\Delta
_{0,t}^{m_{1}+m_{2}}}\mathcal{H}%
_{m_{1}+m_{2}}^{X}(u)du_{m_{1}+m_{2}}...du_{1}  \label{l2}
\end{equation}%
for $u=(u_{1},...,u_{m_{1}+m_{2}}),$ where the integrand $\mathcal{H}%
_{m_{1}+m_{2}}^{X}(u)\in \mathbb{R}^{d}\otimes \mathbb{R}^{d}\otimes \mathbb{%
R}^{d}$ has entries given by sums of at most $C(d)^{m_{1}+m_{2}}$ terms,
which are products of length $m_{1}+m_{2}$ of functions belonging to the set%
\begin{equation*}
\left\{ \frac{\partial ^{\gamma ^{(1)}+...+\gamma ^{(d)}}}{\partial ^{\gamma
^{(1)}}x_{1}...\partial ^{\gamma ^{(d)}}x_{d}}b^{(r)}(u,X_{u}^{x}),\text{ }%
r=1,...,d,\text{ }\gamma ^{(1)}+...+\gamma ^{(d)}\leq 2,\text{ }\gamma
^{(l)}\in \mathbb{N}_{0},\text{ }l=1,...,d\right\} .
\end{equation*}%
Here it is crucial to note that second order derivatives of functions in
those products of functions on $\Delta _{0,t}^{m_{1}+m_{2}}$ in (\ref{l2})
only appear once. So the total order of derivatives $\left\vert \alpha
\right\vert $ of those products of functions in connection with Lemma \ref%
{OrderDerivatives} in the Appendix is%
\begin{equation}
\left\vert \alpha \right\vert =m_{1}+m_{2}+1.
\end{equation}%
We now choose $p,c,r\in \lbrack 1,\infty )$ such that $cp=2^{q}$ for some
integer $q$ and $\frac{1}{r}+\frac{1}{c}=1.$ Then we can use H\"{o}lder's
inequality and Girsanov's theorem (see Theorem \ref{VI_girsanov}) in
combination with Lemma \ref{VI_novikov} and find that%
\begin{eqnarray}
&&E[\left\Vert I_{2}\right\Vert ^{p}]  \notag \\
&\leq &C(\left\Vert b\right\Vert _{L_{\infty }^{\infty }})\left(
\sum_{m_{1}\geq 1}\sum_{r=1}^{m_{1}}\sum_{m_{2}\geq 1}\sum_{i\in
I}\left\Vert \int_{\Delta _{0,t}^{m_{1}+m_{2}}}\mathcal{H}%
_{i}^{B^{H}}(u)du_{m_{1}+m_{2}}...du_{1}\right\Vert _{L^{2^{q}}(\Omega ;%
\mathbb{R})}\right) ^{p},  \label{Lp}
\end{eqnarray}%
where $C:[0,\infty )\longrightarrow \lbrack 0,\infty )$ is a continuous
function depending on $p$. Here $\#I\leq K^{m_{1}+m_{2}}$ for a constant $%
K=K(d)$ and the integrands $\mathcal{H}_{i}^{B^{H}}(u)$ take the form 
\begin{equation*}
\mathcal{H}_{i}^{B^{H}}(u)=\prod_{l=1}^{m_{1}+m_{2}}h_{l}(u_{l}),h_{l}\in \Lambda ,l=1,...,m_{1}+m_{2}
\end{equation*}%
where 
\begin{equation*}
\Lambda :=\left\{ 
\begin{array}{c}
\frac{\partial ^{\gamma ^{(1)}+...+\gamma ^{(d)}}}{\partial ^{\gamma
^{(1)}}x_{1}...\partial ^{\gamma ^{(d)}}x_{d}}b^{(r)}(u,x+B_{u}^{H}),\text{ }%
r=1,...,d, \\ 
\gamma ^{(1)}+...+\gamma ^{(d)}\leq 2,\text{ }\gamma ^{(l)}\in \mathbb{N}%
_{0},\text{ }l=1,...,d%
\end{array}%
\right\} .
\end{equation*}%
As before here functions with second order derivatives only appear once in
those products.

Let 
\begin{equation*}
J=\left( \int_{\Delta _{0,t}^{m_{1}+m_{2}}}\mathcal{H}%
_{i}^{B^{H}}(u)du_{m_{1}+m_{2}}...du_{1}\right) ^{2^{q}}.
\end{equation*}%
By employing Lemma \ref{partialshuffle} once more in the Appendix,
successively, we find that $J$ can be represented as a sum of, at most of
length $K(q)^{m_{1}+m_{2}}$ with summands of the form%
\begin{equation}
\int_{\Delta
_{0,t}^{2^{q}(m_{1}+m_{2})}}\prod_{l=1}^{2^{q}(m_{1}+m_{2})}f_{l}(u_{l})du_{2^{q}(m_{1}+m_{2})}...du_{1},
\label{f}
\end{equation}%
where $f_{l}\in \Lambda $ for all $l$.

Here the number of factors $f_{l}$ in the above product, which have a second
order derivative, is exactly $2^{q}.$ So the total order of the derivatives
involved in (\ref{f}) in connection with Lemma \ref{OrderDerivatives} (where
one in that lemma formally replaces $X_{u}^{x}$ by $x+B_{u}^{H}$ in the
corresponding terms) is 
\begin{equation}
\left\vert \alpha \right\vert =2^{q}(m_{1}+m_{2}+1).  \label{alpha2}
\end{equation}

We now want to apply Theorem \ref{mainestimate2} for $m=2^{q}(m_{1}+m_{2})$
and $\varepsilon _{j}=0$ and see that%
\begin{eqnarray*}
&&\left\vert E\left[ \int_{\Delta
_{0,t}^{2^{q}(m_{1}+m_{2})}}\prod_{l=1}^{2^{q}(m_{1}+m_{2})}f_{l}(u_{l})du_{2^{q}(m_{1}+m_{2})}...du_{1}%
\right] \right\vert  \\
&\leq &C^{m_{1}+m_{2}}(\left\Vert b\right\Vert_{L_{\infty}^1})^{2^{q}(m_{1}+m_{2})} \\
&&\times \frac{((2(2^{q}(m_{1}+m_{2}+1))!)^{1/4}}{\Gamma
(-H(2d2^{q}(m_{1}+m_{2})+42^{q}(m_{1}+m_{2}+1))+22^{q}(m_{1}+m_{2}))^{1/2}}
\end{eqnarray*}%
for a constant $C$ depending on $H,T,d$ and $q$.

Hence the latter in combination with (\ref{Lp}) yields that%
\begin{eqnarray*}
&&E[\left\Vert I_{2}\right\Vert ^{p}] \\
&\leq &C(\left\Vert b\right\Vert _{L_{\infty }^{\infty }})\left(
\sum_{m_{1}\geq 1}\sum_{m_{2}\geq 1}K^{m_{1}+m_{2}}(\left\Vert b\right\Vert
_{L_{\infty}^1})^{2^{q}(m_{1}+m_{2})}\right.  \\
&&\left. \times \frac{((2(2^{q}(m_{1}+m_{2}+1))!)^{1/4}}{\Gamma
(-H(2d2^{q}(m_{1}+m_{2})+42^{q}(m_{1}+m_{2}+1))+22^{q}(m_{1}+m_{2}))^{1/2}}%
)^{1/2^{q}}\right) ^{p}
\end{eqnarray*}%
for a constant $K$ depending on $H,$ $T,$ $d,$ $p$ and $q$.

Since $\frac{1}{2(d+3)}\leq \frac{1}{2(d+2\frac{m_{1}+m_{2}+1}{m_{1}+m_{2}})}
$ for $m_{1},$ $m_{2}\geq 1$, it follows that the above sum converges,
whenever $H<\frac{1}{2(d+3)}$.

On the other hand one obtains by using the same reasoning as before a
similar estimate for $E[\left\Vert I_{1}\right\Vert ^{p}]$. Altogether the
proof follows for $k=2$.

Let us now explain the generalization of the previous line of reasoning to
the case $k\geq 2$: In this case, we get that%
\begin{equation}
\frac{\partial ^{k}}{\partial x^{k}}X_{t}^{x}=I_{1}+...+I_{2^{k-1}},
\label{Ik}
\end{equation}%
where each $I_{i},$ $i=1,...,2^{k-1}$ is a sum of iterated integrals over
simplices of the form $\Delta _{0,u}^{m_{j}},$ $0<u<t,$ $j=1,...,k$ with
integrands, which have at most one product factor $D^{k}b$, whereas the
other factors are of the form $D^{j}b,j\leq k-1$.

In what follows we need the following notation: For given multi-indices $%
m.=(m_{1},...,m_{k})$ and $r:=(r_{1},...,r_{k-1})$ we define%
\begin{equation*}
m_{j}^{-}:=\sum_{i=1}^{j}m_{i}\text{ }
\end{equation*}%
and%
\begin{equation*}
\sum_{\substack{ m\geq 1 \\ r_{l}\leq m_{l}^{-} \\ l=1,...,k-1}}%
:=\sum_{m_{1}\geq 1}\sum_{r_{1}=1}^{m_{1}}\sum_{m_{2}\geq
1}\sum_{r_{2}=1}^{m_{2}^{-}}...\sum_{r_{k-1}=1}^{m_{k-1}^{-}}\sum_{m_{k}\geq
1}.
\end{equation*}%
In the sequel, we restrict ourselves without loss of generality to the
estimation of the summand $I_{2^{k-1}}$ in (\ref{Ik}). In the same way as in
the case $k=2,$ we get by invoking Lemma \ref{OrderDerivatives} (in
connection with Lemma \ref{partialshuffle}) that 
\begin{equation}
I_{2^{k-1}}=\sum_{\substack{ m\geq 1 \\ r_{l}\leq m_{l}^{-} \\ l=1,...,k-1}}%
\int_{\Delta _{0,t}^{m_{1}+...+m_{k}}}\mathcal{H}%
_{m_{1}+...+m_{k}}^{X}(u)du_{m_{1}+m_{2}}...du_{1}
\end{equation}%
for $u=(u_{m_{1}+...+m_{k}},...,u_{1}),$ where the integrand $\mathcal{H}%
_{m_{1}+...+m_{k}}^{X}(u)\in \otimes _{j=1}^{k+1}\mathbb{R}^{d}$ has entries
given by sums of at most $C(d)^{m_{1}+...+m_{k}}$ terms, which are products
of length $m_{1}+...m_{k}$ of functions, which are elements in%
\begin{equation*}
\left\{ 
\begin{array}{c}
\frac{\partial ^{\gamma ^{(1)}+...+\gamma ^{(d)}}}{\partial ^{\gamma
^{(1)}}x_{1}...\partial ^{\gamma ^{(d)}}x_{d}}b^{(r)}(u,X_{u}^{x}),r=1,...,d,
\\ 
\gamma ^{(1)}+...+\gamma ^{(d)}\leq k,\gamma ^{(l)}\in \mathbb{N}%
_{0},l=1,...,d%
\end{array}%
\right\} .
\end{equation*}%
Just as in the case $k=2$ we can employ Lemma \ref{OrderDerivatives} in the
Appendix and obtain that the total order of derivatives $\left\vert \alpha
\right\vert $ of those products of functions is 
\begin{equation}
\left\vert \alpha \right\vert =m_{1}+...+m_{k}+k-1.
\end{equation}%
Then we follow the line of reasoning as before and choose $p,c,r\in \lbrack
1,\infty )$ such that $cp=2^{q}$ for some integer $q$ and $\frac{1}{r}+\frac{%
1}{c}=1$ and get by making use of H\"{o}lder's inequality and Girsanov's
theorem (see Theorem \ref{VI_girsanov}) in connection with Lemma \ref%
{VI_novikov} that%
\begin{eqnarray}
&&E[\left\Vert I_{2^{k-1}}\right\Vert ^{p}]  \notag \\
&\leq &C(\left\Vert b\right\Vert _{L_{\infty }^{\infty }})\left( \sum
_{\substack{ m\geq 1 \\ r_{l}\leq m_{l}^{-} \\ l=1,...,k-1}}\sum_{i\in
I}\left\Vert \int_{\Delta _{0,t}^{m_{1}+m_{2}}}\mathcal{H}%
_{i}^{B^{H}}(u)du_{m_{1}+...+m_{k}}...du_{1}\right\Vert _{L^{2^{q}}(\Omega ;%
\mathbb{R})}\right) ^{p},  \label{Lp2}
\end{eqnarray}%
where $C:[0,\infty )\longrightarrow \lbrack 0,\infty )$ is a continuous
function depending on $p$. Here $\#I\leq K^{m_{1}+...+m_{k}}$ for a constant 
$K=K(d)$ and the integrands $\mathcal{H}_{i}^{B^{H}}(u)$ are of the form 
\begin{equation*}
\mathcal{H}_{i}^{B^{H}}(u)=\prod_{l=1}^{m_{1}+...+m_{k}}h_{l}(u_{l}),%
\text{ }h_{l}\in \Lambda ,\text{ }l=1,...,m_{1}+...+m_{k},
\end{equation*}%
where 
\begin{equation*}
\Lambda :=\left\{ 
\begin{array}{c}
\frac{\partial ^{\gamma ^{(1)}+...+\gamma ^{(d)}}}{\partial ^{\gamma
^{(1)}}x_{1}...\partial ^{\gamma ^{(d)}}x_{d}}b^{(r)}(u,x+B_{u}^{H}),\text{ }%
r=1,...,d, \\ 
\gamma ^{(1)}+...+\gamma ^{(d)}\leq k,\text{ }\gamma ^{(l)}\in \mathbb{N}%
_{0},\text{ }l=1,...,d%
\end{array}%
\right\} .
\end{equation*}%
Define 
\begin{equation*}
J=\left( \int_{\Delta _{0,t}^{m_{1}+...+m_{k}}}\mathcal{H}%
_{i}^{B^{H}}(u)du_{m_{1}+...+m_{k}}...du_{1}\right) ^{2^{q}}.
\end{equation*}%
Once more, repeated application of Lemma \ref{partialshuffle} in the
Appendix shows that $J$ can be written as a sum of, at most of length $%
K(q)^{m_{1}+....m_{k}}$ with summands of the form%
\begin{equation}
\int_{\Delta
_{0,t}^{2^{q}(m_{1}+...+m_{k})}}\prod_{l=1}^{2^{q}(m_{1}+...+m_{k})}f_{l}(u_{l})du_{2^{q}(m_{1}+....+m_{k})}...du_{1},
\label{f2}
\end{equation}%
where $f_{l}\in \Lambda $ for all $l$.

By using Lemma \ref{OrderDerivatives} again (where one in that Lemma
formally replaces $X_{u}^{x}$ by $x+B_{u}^{H}$ in the corresponding
expressions) we find that the total order of the derivatives in the products
of functions in (\ref{f2}) is given by%
\begin{equation}
\left\vert \alpha \right\vert =2^{q}(m_{1}+...+m_{k}+k-1).
\end{equation}

Then Proposition \ref{mainestimate2} for $m=2^{q}(m_{1}+...+m_{k})$ and $%
\varepsilon _{j}=0$ implies that%
\begin{eqnarray*}
&&\left\vert E\left[ \int_{\Delta
_{0,t}^{2^{q}(m_{1}+...+m_{k})}}\prod_{l=1}^{2^{q}(m_{1}+...+m_{k})}f_{l}(u_{l})du_{2^{q}(m_{1}+...+m_{k})}...du_{1}%
\right] \right\vert  \\
&\leq &C^{m_{1}+...+m_{k}}(\left\Vert b\right\Vert _{L^{1}(\mathbb{R}%
^{d})})^{2^{q}(m_{1}+...+m_{k})} \\
&&\times \frac{((2(2^{q}(m_{1}+...+m_{k}+k-1))!)^{1/4}}{\Gamma
(-H(2d2^{q}(m_{1}+...+m_{k})+42^{q}(m_{1}+...+m_{k}+k-1))+22^{q}(m_{1}+...+m_{k}))^{1/2}%
}
\end{eqnarray*}%
for a constant $C$ depending on $H,$ $T,$ $d$ and $q$.

So it follows from (\ref{Lp2}) that%
\begin{eqnarray*}
&&E[\left\Vert I_{2^{k-1}}\right\Vert ^{p}] \\
&\leq &C(\left\Vert b\right\Vert _{L_{\infty }^{\infty }})\left(
\sum_{m_{1}\geq 1}...\sum_{m_{k}\geq 1}K^{m_{1}+...+m_{k}}(\left\Vert
b\right\Vert _{L_{\infty}^1})^{2^{q}(m_{1}+...+m_{k})}\right.  \\
&&\left. \times \frac{((2(2^{q}(m_{1}+...+m_{k}+k-1))!)^{1/4}}{\Gamma
(-H(2d2^{q}(m_{1}+...+m_{k})+42^{q}(m_{1}+...+m_{k}+k-1))+22^{q}(m_{1}+...+m_{k}))^{1/2}%
})^{1/2^{q}}\right) ^{p} \\
&\leq &C(\left\Vert b\right\Vert _{L_{\infty }^{\infty }})\left( \sum_{m\geq
1}\sum_{\substack{ l_{1},...,l_{k}\geq 0: \\ l_{1}+...+l_{k}=m}}%
K^{m}(\left\Vert b\right\Vert _{L_{\infty}^1})^{2^{q}m}\right.  \\
&&\left. \times \frac{((2(2^{q}(m+k-1))!)^{1/4}}{\Gamma
(-H(2d2^{q}m+42^{q}(m+k-1))+22^{q}m)^{1/2}})^{1/2^{q}}\right) ^{p}
\end{eqnarray*}%
for a constant $K$ depending on $H,$ $T,$ $d,$ $p$ and $q$.

Since we required that $H<$ $\frac{1}{2(d-1+2k)}$ the above sum converges.
So the proof follows.
\end{proof}

The following is the main result of this section and shows that the fractional Brownian motion $B^H$ creates a regularising effect on the solution as a function of the initial condition.

\begin{thm}
Assume $b\in L_{\infty,\infty}^{1,\infty}$. Let $U\subset \R^d$ and open and bounded subset and $X=\{X_t,t\in [0,T]\}$ the solution of \eqref{VI_SDE}. Then for a small enough Hurst parameter $H$, that is $H<$ $\frac{1}{2(d-1+2k)}$ it follows that
$$X_t^{\cdot} \in \bigcap_{p>1} L^2(\Omega, W^{k,p}(U)).$$
\end{thm}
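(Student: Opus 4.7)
The plan is to transfer the derivative bounds from Theorem \ref{VI_derivative} (valid only for smooth drifts) to the general case $b\in L_\infty^{1,\infty}$ through a weak-compactness argument in the reflexive Banach space $L^2(\Omega,W^{k,p}(U))$.

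First I would choose an approximating sequence $\{b_n\}\subset C_c^\infty([0,T]\times\R^d)$ exactly as in Section~4, i.e.\ $\|b_n-b\|_{L_\infty^1}\to 0$ while $\sup_n(\|b_n\|_{L_\infty^\infty}+\|b_n\|_{L_\infty^1})<\infty$ (produced by truncating $b$ outside a large ball and then convolving with a spatial mollifier). Let $X_t^{n,x}$ denote the smooth flow associated with $b_n$. The hypothesis $H<1/(d(2k+1))$ also forces $H<1/(d(2|\alpha|+1))$ for every multi-index with $|\alpha|\leq k$, so Theorem \ref{VI_derivative} applies in every such order and for every $p\geq 2$, giving
\[
\sup_{n\geq 0}\,\sup_{x\in\R^d}\, E\!\left[\left\|\partial_x^{\alpha}X_t^{n,x}\right\|^{p}\right]\leq M<\infty,
\]
where $M$ depends only on $\sup_n\|b_n\|_{L_\infty^\infty}$, $\sup_n\|b_n\|_{L_\infty^1}$, $k$, $d$, $H$, $p$ and $T$. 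Coupled with a uniform-in-$(n,x\in U)$ moment bound on the zeroth derivative $E[|X_t^{n,x}|^p]$ (via Girsanov's theorem and Gaussian moments), Fubini gives a uniform bound on $E\bigl[\|X_t^{n,\cdot}\|_{W^{k,p}(U)}^{p}\bigr]$, and in particular boundedness of $\{X_t^{n,\cdot}\}_n$ in $L^2(\Omega,W^{k,p}(U))$.

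Since $L^2(\Omega,W^{k,p}(U))$ is reflexive for $p\in(1,\infty)$, I extract a weakly convergent subsequence $X_t^{n_j,\cdot}\rightharpoonup Y$ in this space and then identify $Y$ with $X_t^{\cdot}$. The proof of Theorem \ref{VI_mainthm} (through Lemma \ref{VI_relcomp} and Corollary \ref{VI_L2conv}) gives $X_t^{n,x}\to X_t^{x}$ in $L^2(\Omega)$ for each fixed $x$, and the underlying Malliavin compactness estimates are uniform in $x$ since they depend on $b_n$ only through the above norms. Testing the weak convergence against functionals of the form $\xi(\omega)\varphi(x)$ with $\xi\in L^\infty(\Omega)$ and $\varphi\in C_c^\infty(U)$ yields
\[
E\!\left[\xi\int_U X_t^{n_j,x}\varphi(x)\,dx\right]\longrightarrow E\!\left[\xi\int_U Y(x)\varphi(x)\,dx\right],
\]
while the uniform-in-$(n,x\in U)$ $L^2(\Omega)$-bound on $X_t^{n,x}$ combined with the pointwise $L^2(\Omega)$-convergence and dominated convergence produce the same limit with $X_t^{\cdot}$ in place of $Y$. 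Thus $Y=X_t^{\cdot}$ in $L^2(\Omega,L^p(U))$, so $X_t^{\cdot}\in L^2(\Omega,W^{k,p}(U))$ for every $p\geq 2$; the range $1<p<2$ then follows via H\"{o}lder's inequality on the bounded set $U$.

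The main obstacle I anticipate is precisely the identification of the weak limit: pointwise-in-$x$ $L^2(\Omega)$-convergence is considerably weaker than weak convergence in $L^2(\Omega,L^p(U))$, and one needs the uniform-in-$(n,x)$ second-moment bound on $X_t^{n,x}$ in order to push the pointwise convergence through the $x$-integral by dominated convergence. A subsidiary technical point to verify is that the constant in Theorem \ref{VI_derivative} genuinely depends on $b$ only through $\|b\|_{L_\infty^\infty}$ and $\|b\|_{L_\infty^1}$ and not on higher-order Sobolev norms of $b_n$; this is what guarantees that the uniform estimate survives the mollify-and-truncate approximation and hence makes the entire weak-compactness scheme work.
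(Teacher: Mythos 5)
Your proposal is correct and follows essentially the same route as the paper: uniform $L^2(\Omega,W^{k,p}(U))$-bounds from Theorem \ref{VI_derivative} applied to the smooth approximations $b_n$, weak compactness via reflexivity and Banach--Alaoglu, and identification of the weak limit with $X_t^{\cdot}$ through the strong $L^2(\Omega)$-convergence $X_t^{n,x}\to X_t^x$ established in Section~4. The only cosmetic difference is that the paper identifies the limit by first proving relative compactness of the pairings $\langle X_t^{n,\cdot},\varphi\rangle$ in $L^2(\Omega)$ via the Malliavin compactness criterion, whereas you test directly against functionals $\xi(\omega)\varphi(x)$ and use dominated convergence; both arguments rest on the same uniform estimates.
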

\begin{proof}
First of all, approximate the irregular drift vector field $b$ by a sequence of functions $b_n:[0,T]\times \R^d\rightarrow \R^d$, $n\geq 0$ in $C_c^{\infty}([0,T]\times \R^d, \R^d)$ in the sense of \eqref{approxb}. Denote by $X^{n,x} = \{X_t^{n,x}, t\in [0,T]\}$, the corresponding solution to \eqref{VI_SDE} starting from $x\in \R^d$ when $b$ is replaced by $b_n$.

Observe that for any test function $\varphi \in C_0^{\infty}(U, \R^d)$ and fixed $t\in [0,T]$ the set of random variables
$$\langle X_t^{n,\cdot}, \varphi\rangle := \int_{U} \langle X_t^{n,x}, \varphi(x) \rangle_{\R^d} dx, \quad n\geq 0$$
is relatively compact in $L^2(\Omega)$. To show this, we use the compactness criterion from Appendix, in Corollary \ref{VI_compactcrit} in terms of the Malliavin derivative. Since the Malliavin derivative is a closed linear operator we have

\begin{align*}
\textrm{E}\left[\int_0^T |D_{\theta}^j\langle X_t^{n,\cdot}, \varphi\rangle |^2 d\theta \right] \leq  d\|\varphi\|_{L^2(\R^d,\R^d)}^2 \lambda\{\mbox{supp } (\varphi)\} \sup_{x\in U} \textrm{E}\left[\int_0^T \|D_{\theta} X_t^{n,x}\|^2 d\theta\right],
\end{align*}
where $D^j$ denotes the Malliavin derivative in the direction of $W^{(j)}$, $\lambda$ the Lebesgue measure on $\R^d$, $\mbox{supp }(\varphi)$ the support of $\varphi$ and $\|\cdot\|$ a matrix norm. Then taking the sum over all $j=1,\dots,d$ and using Lemma \ref{VI_relcomp} we obtain
\begin{align*}
\sup_{n\geq 0}\|D_{\cdot}\langle X_t^{n,\cdot}, \varphi\rangle \|_{L^2(\Omega\times [0,T])}^2 \leq  C \|\varphi\|_{L^2(\R^d,\R^d)}^2  \lambda\{\mbox{supp } (\varphi)\}.
\end{align*}
In a similar manner we have
$$\sup_{n\geq 0} \int_0^T \int_0^T \frac{\textrm{E}[ \| D_{\theta'} \langle X_t^{n,\cdot}, \varphi \rangle - D_{\theta} \langle X_t^{n,\cdot}, \varphi \rangle \|^2 ]}{|\theta'-\theta|^{1+2\beta}} d\theta d\theta' <\infty$$
for some $\beta\in(0,1/2)$. Hence $\langle X_t^{n,\cdot}, \varphi\rangle$, $n\geq 0$ is relatively compact in $L^2(\Omega)$. Let us denote by $Y_t(\varphi)$ its limit after taking (if necessary) a subsequence.

Following exactly the same reasoning as in Lemma \ref{VI_weakconv} one can show that
$$\langle X_t^{n,\cdot}, \varphi\rangle \xrightarrow{n \to \infty} \langle X_t^{\cdot}, \varphi\rangle$$
weakly in $L^2(\Omega)$. Then by uniqueness of the limit we can establish that
$$Y_t(\varphi) = \langle X_t^{\cdot}, \varphi\rangle$$
in $L^2(\Omega)$.

Note that there exists a subsequence $n(j)$ such that $\langle X_t^{n(j),\cdot}, \varphi\rangle$ converges for every $\varphi$, that is, $n(j)$ is independent of $\varphi$.

We have that $X_t^{n,\cdot}$ is bounded in the Sobolev norm $ L^2(\Omega, W^{k,p}(U))$ for each $n\geq 0$. Indeed, by Proposition \ref{VI_derivative} we have
\begin{align*}
\sup_{n\geq 0} \|X_t^{n,\cdot}\|_{L^2(\Omega, W^{k,p}(U))}^2 =& \sup_{n\geq 0}\sum_{i=0}^{k} \textrm{E}\left[\|\frac{\partial^i}{\partial x^i}X_t^{n,\cdot}\|_{L^{p}(U)}^2\right]\\
\leq& \sum_{i=0}^{k}\left(  \int_{U} \sup_{n\geq 0}\textrm{E}\left[ \|\frac{\partial^i}{\partial x^i}X_t^{n,x}\|^{p} \right]dx\right) ^{\frac{2}{p}%
}
<&\infty
\end{align*}
for a small enough $H<1/2$.

Since $L^2(\Omega, W^{k,p}(U))$, $p\in(1,\infty)$ is reflexive we get that the set $\{X_t^{n,x}\}_{n\geq 0}$ is weakly compact in $L^2(\Omega, W^{k,p}(U))$. Thus, there exists a subsequence $n(j)$, $j\geq 0$ such that
$$X_t^{n(j),\cdot}  \xrightarrow[j\to \infty]{w} Y \in L^2(\Omega, W^{k,p}(U)).$$

On the other hand, we have proven that $X_t^{n,x} \to X_t^x$ strongly in $L^2(\Omega)$, so by uniqueness of the limit we can conclude that
$$X_t^{\cdot} =Y \in L^2(\Omega, W^{k,p}(U)), \ \ P-a.s.$$

Moreover, we have for all $A\in \mathcal{F},$ $\phi \in C_{c}^{\infty }(U;%
\mathbb{R}^{d}),$ $\alpha =(\alpha ^{(1)},...,\alpha ^{(d)})\in (\mathbb{N}%
_{0})^{d}$ with $\left\vert \alpha \right\vert =\alpha ^{(1)}+...+\alpha
^{(d)}\leq k$ that%
\begin{eqnarray*}
&&E\left[ 1_{A}\left\langle X_{t}^{n_{j},\cdot },D^{\alpha }\phi
\right\rangle \right]  \\
&=&(-1)^{\left\vert \alpha \right\vert }E\left[ 1_{A}\left\langle D^{\alpha
}X_{t}^{n_{j},\cdot },\phi \right\rangle \right]  \\
&&\underset{j\longrightarrow \infty }{\longrightarrow }(-1)^{\left\vert
\alpha \right\vert }E\left[ 1_{A}\left\langle D^{\alpha }Y,\phi
\right\rangle \right] 
\end{eqnarray*}%
and thus%
\begin{equation*}
\left\langle X_{t}^{\cdot },D^{\alpha }\phi \right\rangle =(-1)^{\left\vert
\alpha \right\vert }\left\langle D^{\alpha }Y,\phi \right\rangle \text{, }P-%
\text{a.s.}
\end{equation*}

\end{proof}

\appendix

\section{Technical results}
The following result which is due to \cite[Theorem 1] {DPMN92} provides a compactness criterion for subsets of $L^{2}(\Omega)$ using Malliavin calculus.

\begin{thm}
\label{VI_MCompactness}Let $\left\{ \left( \Omega ,\mathcal{A},P\right)
;H\right\} $ be a Gaussian probability space, that is $\left( \Omega ,%
\mathcal{A},P\right) $ is a probability space and $H$ a separable closed
subspace of Gaussian random variables of $L^{2}(\Omega )$, which generate
the $\sigma $-field $\mathcal{A}$. Denote by $\mathbf{D}$ the derivative
operator acting on elementary smooth random variables in the sense that%
\begin{equation*}
\mathbf{D}(f(h_{1},\ldots,h_{n}))=\sum_{i=1}^{n}\partial
_{i}f(h_{1},\ldots,h_{n})h_{i},\text{ }h_{i}\in H,f\in C_{b}^{\infty }(\mathbb{R%
}^{n}).
\end{equation*}%
Further let $\mathbb{D}^{1,2}$ be the closure of the family of elementary
smooth random variables with respect to the norm%
\begin{align*}
\left\Vert F\right\Vert _{1,2}:=\left\Vert F\right\Vert _{L^{2}(\Omega
)}+\left\Vert \mathbf{D}F\right\Vert _{L^{2}(\Omega ;H)}.
\end{align*}%
Assume that $C$ is a self-adjoint compact operator on $H$ with dense image.
Then for any $c>0$ the set
\begin{equation*}
\mathcal{G}=\left\{ G\in \mathbb{D}^{1,2}:\left\Vert G\right\Vert
_{L^{2}(\Omega )}+\left\Vert C^{-1} \mathbf{D} \,G\right\Vert _{L^{2}(\Omega ;H)}\leq
c\right\}
\end{equation*}%
is relatively compact in $L^{2}(\Omega )$.
\end{thm}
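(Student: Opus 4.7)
The natural approach is to combine the Wiener chaos decomposition of $L^{2}(\Omega)$ with the spectral structure of $C$ and a diagonal extraction. Write $L^{2}(\Omega)=\bigoplus_{n\geq 0}\mathcal{H}_{n}$, let $J_{n}$ denote the orthogonal projection onto the $n$-th chaos, and expand $G=\sum_{n\geq 0}I_{n}(f_{n})$ with symmetric kernels $f_{n}\in H^{\otimes n}$. Recall that $\mathbf{D}I_{n}(f_{n})$ is an $H$-valued random variable with $\|\mathbf{D}I_{n}(f_{n})\|_{L^{2}(\Omega;H)}^{2}=n\cdot n!\,\|f_{n}\|_{H^{\otimes n}}^{2}$, and that $\|I_{n}(f_{n})\|_{L^{2}(\Omega)}^{2}=n!\,\|f_{n}\|^{2}$.

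First I would handle the \emph{tail in chaos order}. Since $C$ is compact it is bounded, so the hypothesis gives
$$\|\mathbf{D}G\|_{L^{2}(\Omega;H)}\leq\|C\|_{\mathrm{op}}\,\|C^{-1}\mathbf{D}G\|_{L^{2}(\Omega;H)}\leq\|C\|_{\mathrm{op}}\,c.$$
The Poincaré-type identity $\|\mathbf{D}G\|^{2}=\sum_{n\geq 1}n\cdot n!\,\|f_{n}\|^{2}$ then yields the uniform tail bound
$$\Bigl\|\sum_{n>N}J_{n}G\Bigr\|_{L^{2}(\Omega)}^{2}=\sum_{n>N}n!\,\|f_{n}\|^{2}\leq\frac{1}{N+1}\|\mathbf{D}G\|_{L^{2}(\Omega;H)}^{2}\leq\frac{\|C\|_{\mathrm{op}}^{2}\,c^{2}}{N+1}$$
for every $G\in\mathcal{G}$.

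Next I would establish relative compactness of $J_{n}\mathcal{G}$ in $\mathcal{H}_{n}$ for each fixed $n$. Density of the range together with self-adjointness forces $C$ to be injective, so the spectral theorem for self-adjoint compact operators provides an orthonormal eigenbasis $\{e_{k}\}\subset H$ with $Ce_{k}=\lambda_{k}e_{k}$, $\lambda_{k}\neq 0$ and $\lambda_{k}\to 0$. In these coordinates the hypothesis translates into
$$\sum_{n\geq 1}n\cdot n!\sum_{k\geq 1}\lambda_{k}^{-2}\,\|\langle f_{n},e_{k}\rangle_{n}\|_{H^{\otimes(n-1)}}^{2}\leq c^{2},$$
where $\langle f_{n},e_{k}\rangle_{n}$ is the contraction of $f_{n}$ in its last slot. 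By symmetry this is an honest weighted bound on the kernels, and together with the $H^{\otimes n}$-bound $\|f_{n}\|^{2}\leq c^{2}/n!$ it gives the Kolmogorov–Riesz-type tightness $\sum_{k>K}\|\langle f_{n},e_{k}\rangle_{n}\|^{2}\leq\bigl(\sup_{k>K}\lambda_{k}^{2}\bigr)\cdot c^{2}/(n\cdot n!)\to 0$ uniformly in $G\in\mathcal{G}$ as $K\to\infty$. Combined with uniform boundedness this yields relative compactness of the set of kernels $\{f_{n}\}$ in $H^{\otimes n}$, hence of $J_{n}\mathcal{G}$ in $\mathcal{H}_{n}$.

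Finally I would close the argument by a Cantor diagonal extraction: given $\{G^{(j)}\}\subset\mathcal{G}$, successively extract subsequences along which $J_{n}G^{(j)}$ converges in $L^{2}(\Omega)$ for each $n$, then pass to the diagonal. Convergence of the truncations $\sum_{n\leq N}J_{n}G^{(j)}$ combined with the uniform tail estimate of the first step upgrades, by a standard $\varepsilon/2$ argument, to Cauchyness of $G^{(j)}$ itself in $L^{2}(\Omega)$, whence relative compactness of $\mathcal{G}$.

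\textbf{Main obstacle.} The real work sits in the chaos-level compactness, i.e.\ turning the abstract hypothesis that $C^{-1}\mathbf{D}G$ is $L^{2}$-bounded into genuine compactness of the kernels $f_{n}$ in $H^{\otimes n}$. One must carefully handle the fact that only one tensor slot is directly improved by $C^{-1}$ (and the others are merely controlled through the $L^{2}$ bound on $G$), then use symmetry of $f_{n}$ and the decay $\lambda_{k}\to 0$ to extract the required uniform tightness. The tail bound in chaos order is what allows the argument to be uniform in $n$.
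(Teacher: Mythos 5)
The paper does not actually prove Theorem \ref{VI_MCompactness}; it is imported verbatim from \cite{DPMN92}, and your argument is correct and is essentially the original one: chaos expansion, the weighted bound $\sum_n n\cdot n!\sum_k\lambda_k^{-2}\|\langle f_n,e_k\rangle_n\|^2\leq c^2$ from the spectral decomposition of $C$, compactness of the kernels at each fixed chaos order, a uniform-in-$\mathcal{G}$ tail bound in the chaos order, and a diagonal extraction. The only step worth spelling out is the one you flag yourself, namely that tightness in a single tensor slot upgrades to total boundedness in $H^{\otimes n}$ via symmetry of $f_n$ and the union bound $\sum_{\max_i k_i>K}|a_{k_1\cdots k_n}|^2\leq n\sum_{k_n>K}|a_{k_1\cdots k_n}|^2$.
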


In order to formulate compactness criteria useful for our purposes, we need the following technical result which also can be found in \cite{DPMN92}.

\begin{lem}
\label{VI_DaPMN} Let $v_{s},s\geq 0$ be the Haar basis of $L^{2}([0,T])$. For
any $0<\alpha <1/2$ define the operator $A_{\alpha }$ on $L^{2}([0,T])$ by%
\begin{equation*}
A_{\alpha }v_{s}=2^{k\alpha }v_{s}\text{, if }s=2^{k}+j\text{ }
\end{equation*}%
for $k\geq 0,0\leq j\leq 2^{k}$ and%
\begin{equation*}
A_{\alpha }1=1.
\end{equation*}%
Then for all $\beta $ with $\alpha <\beta <(1/2),$ there exists a constant $%
c_{1}$ such that%
\begin{equation*}
\left\Vert A_{\alpha }f\right\Vert \leq c_{1}\left\{ \left\Vert f\right\Vert
_{L^{2}([0,T])}+\left(\int_{0}^{T}\int_{0}^{T}\frac{\left|
f(t)-f(t^{\prime })\right|^2}{\left\vert t-t^{\prime }\right\vert
^{1+2\beta }}dt\,dt^{\prime }\right)^{1/2}\right\} .
\end{equation*}
\end{lem}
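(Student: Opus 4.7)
The plan is to reduce the operator norm on the left to a sum over Haar coefficients, estimate each Haar coefficient by the local Gagliardo–Slobodeckij energy on the support of that Haar function with the right power of the length, and then exploit $\alpha<\beta$ to sum a convergent geometric series.

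First I would expand $f$ in the orthonormal Haar basis. Writing $v_{k,j} := v_{2^k+j}$, let $c_{k,j} := \langle f, v_{k,j}\rangle_{L^2([0,T])}$ and $c_0 := \langle f, 1/\sqrt{T}\rangle$. By the definition of $A_\alpha$ and orthonormality,
\begin{equation*}
\|A_\alpha f\|_{L^2}^2 \;=\; c_0^{\,2} \;+\; \sum_{k\ge 0} 2^{2k\alpha} \sum_{j} |c_{k,j}|^2,
\end{equation*}
and $c_0^{\,2}\le \|f\|_{L^2}^2$ by Cauchy–Schwarz. So everything reduces to controlling the main double sum.

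Next I would estimate a single Haar coefficient. Denote by $I_{k,j}$ the dyadic support of $v_{k,j}$, a subinterval of $[0,T]$ with $|I_{k,j}| = T\cdot 2^{-k}$, and note that $\int_{I_{k,j}} v_{k,j}=0$ while $\|v_{k,j}\|_{L^2}=1$. Using the vanishing mean,
\begin{equation*}
c_{k,j} \;=\; \frac{1}{|I_{k,j}|}\int_{I_{k,j}}\!\!\int_{I_{k,j}} v_{k,j}(t)\bigl(f(t)-f(t')\bigr)\,dt\,dt'.
\end{equation*}
Applying the Cauchy–Schwarz inequality after multiplying and dividing the integrand by $|t-t'|^{(1+2\beta)/2}$, and bounding $|t-t'|\le |I_{k,j}|$ in the resulting weight, yields
\begin{equation*}
|c_{k,j}|^{2} \;\le\; |I_{k,j}|^{2\beta}\, J_{k,j}, \qquad J_{k,j} := \int_{I_{k,j}}\!\!\int_{I_{k,j}} \frac{|f(t)-f(t')|^{2}}{|t-t'|^{1+2\beta}}\,dt\,dt'.
\end{equation*}
Since $|I_{k,j}|^{2\beta} = T^{2\beta}\,2^{-2k\beta}$, multiplying by $2^{2k\alpha}$ produces a geometric factor $2^{-2k(\beta-\alpha)}$. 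The supports $\{I_{k,j}\}_{j}$ at a fixed level $k$ form a partition of $[0,T]$, so $\sum_{j} J_{k,j}\le \bigl[f\bigr]_\beta^{\,2}$, where $[f]_\beta$ denotes the Gagliardo seminorm appearing on the right-hand side of the lemma. Summing,
\begin{equation*}
\sum_{k\ge 0} 2^{2k\alpha} \sum_{j} |c_{k,j}|^{2} \;\le\; T^{2\beta}\,[f]_\beta^{\,2}\sum_{k\ge 0} 2^{-2k(\beta-\alpha)} \;=\; \frac{T^{2\beta}}{1-2^{-2(\beta-\alpha)}}\,[f]_\beta^{\,2},
\end{equation*}
where the geometric series converges precisely because $\alpha<\beta$. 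Combining with the bound on $c_0^{\,2}$ and taking square roots (using $\sqrt{a^2+b^2}\le a+b$) gives the claimed inequality with an explicit constant $c_1=c_1(\alpha,\beta,T)$.

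The only delicate step is the Cauchy–Schwarz manipulation that produces exactly the factor $|I_{k,j}|^{2\beta}$; the exponent must match so that after the multiplication by $2^{2k\alpha}$ one is left with a summable geometric series. The constraint $\beta<1/2$ enters only through the integrability required for $[f]_\beta$ to have its usual meaning as a fractional Sobolev seminorm on an interval, and the lower bound $\alpha<\beta$ is used in exactly one place, to ensure convergence of the sum over scales.
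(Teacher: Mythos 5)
Your proof is correct and complete. Note that the paper does not actually prove this lemma: it is quoted as a known technical result from \cite{DPMN92}, so there is no in-paper argument to compare against; your proof is the standard one and, as far as I can tell, essentially the argument of that reference. All the key steps check out: the diagonal action of $A_{\alpha}$ on the orthonormal Haar basis reduces the claim to weighting the squared coefficients by $2^{2k\alpha}$; the zero-mean property of $v_{k,j}$ plus Cauchy--Schwarz gives $|c_{k,j}|^{2}\leq |I_{k,j}|^{2\beta}J_{k,j}$ with the correct exponent (the weight integral is bounded by $|I_{k,j}|^{2+2\beta}$, and the prefactor $|I_{k,j}|^{-2}$ leaves exactly $|I_{k,j}|^{2\beta}$); the squares $I_{k,j}\times I_{k,j}$ at a fixed level are pairwise disjoint in $[0,T]^{2}$, so the sum over $j$ is dominated by the full Gagliardo energy; and the sum over scales converges precisely because $\alpha<\beta$. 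Your closing remark is also accurate: the hypothesis $\beta<1/2$ plays no role in the estimate itself, only $\alpha<\beta$ does.
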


A direct consequence of Theorem \ref{VI_MCompactness} and Lemma \ref{VI_DaPMN} is now the following compactness criteria. See \cite{DPMN92} for a proof.

\begin{cor} \label{VI_compactcrit}
Let a sequence of $\mathcal{F}_T$-measurable random variables $X_n\in\mathbb{D}^{1,2}$, $n=1,2...$, be such that there exists a constant $C>0$ with
$$
\sup_n \textrm{\emph{E}}[|X_n|^2] \leq C ,
$$
$$
\sup_n \textrm{\emph{E}}\left[ \| D_t X_n \|_{L^2([0,T])}^2 \right] \leq C \,
$$
and there exists a $\beta \in (0,1/2)$ such that
$$
\sup_n \int_0^T \int_0^T \frac{\textrm{\emph{E}}\left[ \| D_t X_n - D_{t'} X_n \|^2 \right]}{|t-t'|^{1+2\beta}} dtdt' <\infty
$$
where $\|\cdot\|$ denotes any matrix norm.

Then the sequence $X_n$, $n=1,2...$, is relatively compact in $L^{2}(\Omega )$.
\end{cor}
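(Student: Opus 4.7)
The plan is to derive Corollary \ref{VI_compactcrit} as a direct consequence of Theorem \ref{VI_MCompactness} together with the auxiliary estimate in Lemma \ref{VI_DaPMN}. Since the hypotheses are stated componentwise (the matrix norm) and Theorem \ref{VI_MCompactness} concerns scalar random variables in $\mathbb{D}^{1,2}$, I would first reduce to the scalar case: writing $X_n = (X_n^{(1)},\dots,X_n^{(d)})$, it suffices to show each coordinate sequence is relatively compact in $L^2(\Omega)$, and the stated hypotheses pass to each coordinate up to an absolute constant depending only on the equivalence of matrix norms.

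The key is to construct the compact self-adjoint operator $C$ on $H = L^2([0,T])$ required by Theorem \ref{VI_MCompactness}. I would take $C := A_\alpha^{-1}$, where $A_\alpha$ is the Haar-basis operator from Lemma \ref{VI_DaPMN} and $\alpha \in (0,\beta)$ is fixed. Because $A_\alpha$ is diagonal in the Haar basis with eigenvalues $1, 2^{k\alpha}$ tending to infinity, $A_\alpha^{-1}$ is diagonal with eigenvalues tending to zero, hence self-adjoint and compact, with dense image (every Haar basis vector lies in its range). Thus $C = A_\alpha^{-1}$ is an admissible choice, and $C^{-1} = A_\alpha$.

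Next I would estimate $\|A_\alpha D X_n\|_{L^2(\Omega; L^2([0,T]))}$ using Lemma \ref{VI_DaPMN}. Applied pathwise to the (scalar) trajectory $t \mapsto D_t X_n(\omega)$ and then squared and integrated over $\Omega$, the lemma yields
\begin{equation*}
E\bigl[\|A_\alpha D X_n\|_{L^2([0,T])}^2\bigr] \leq 2 c_1^2 \left\{ E\bigl[\|D X_n\|_{L^2([0,T])}^2\bigr] + \int_0^T\!\!\int_0^T \frac{E\bigl[|D_t X_n - D_{t'} X_n|^2\bigr]}{|t-t'|^{1+2\beta}}\,dt\,dt' \right\}.
\end{equation*}
Both terms on the right are uniformly bounded in $n$ by the second and third hypotheses, so there is a constant $c > 0$ with
\begin{equation*}
\sup_n \Bigl( \|X_n\|_{L^2(\Omega)} + \|C^{-1} D X_n\|_{L^2(\Omega; L^2([0,T]))} \Bigr) \leq c.
\end{equation*}
Consequently the sequence $\{X_n\}$ lies in the set $\mathcal{G}$ of Theorem \ref{VI_MCompactness}, which is relatively compact in $L^2(\Omega)$, and the conclusion follows.

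The only real subtlety I anticipate is the choice of $\alpha$ and the verification that Lemma \ref{VI_DaPMN} applies to the random function $t\mapsto D_t X_n$: one must ensure measurability of $\omega \mapsto \|A_\alpha D X_n(\omega)\|_{L^2([0,T])}$ and justify Fubini to pass the expectation inside, which is routine once one works on a separable dense subset of $\mathbb{D}^{1,2}$ and uses the closedness of $A_\alpha$. The componentwise reduction for $\mathbb{R}^d$-valued $X_n$ is a bookkeeping step and uses only that the equivalence constants between any two matrix norms in $\mathbb{R}^d$ are finite.
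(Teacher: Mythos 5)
Your proposal is correct and is exactly the argument the paper intends: the paper states the corollary as a direct consequence of Theorem \ref{VI_MCompactness} and Lemma \ref{VI_DaPMN} without writing out the details, and your choice $C=A_\alpha^{-1}$ with $\alpha\in(0,\beta)$, the pathwise application of Lemma \ref{VI_DaPMN} to $t\mapsto D_tX_n$, and the componentwise reduction for vector/matrix-valued quantities fill in precisely the intended steps.
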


For the use of the above result we will need to exploit the following technical results.

\begin{lem}\label{VI_doubleint}
Let $H \in (0,1/2)$ and $t\in [0,T]$ be fixed. Then, there exists a $\beta \in (0,1/2)$ such that
\begin{align}\label{VI_intI}
\int_0^t \int_0^t \frac{|K_H(t,\theta') - K_H(t,\theta)|^2}{|\theta'-\theta|^{1+2\beta}}d\theta d\theta ' < \infty.
\end{align}
\end{lem}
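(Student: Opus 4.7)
The plan is to pin down the singular behavior of $s\mapsto K_H(t,s)$ on $(0,t)$ by direct asymptotic analysis of the explicit formula given in Section~\ref{VI_Frame}, and then estimate the double integral region by region. First I would verify that $K_H(t,s)\sim c_H(t-s)^{H-\frac{1}{2}}$ as $s\to t^-$ and $K_H(t,s)\sim \tilde c_H\,s^{H-\frac{1}{2}}$ as $s\to 0^+$ (by expanding the two factors in the formula and, for the integral term, scaling $u=sv$ to see that the remaining integral converges to a finite constant as $s\to 0$), and that on any compact subinterval of $(0,t)$ the partial derivative $\partial_s K_H(t,s)$ is bounded. These asymptotics are essentially the only input.

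Next I would split $[0,t]^2$ into three regions: (i) both $\theta,\theta'\in[0,t/2]$, (ii) both in $[t/2,t]$, and (iii) the complement. Region (iii) is immediate from the mean value theorem together with the boundedness of $\partial_s K_H$ on $[t/4,3t/4]$: the integrand is $O(|\theta'-\theta|^{2-\gamma})$, integrable for every $\gamma<2$. Regions (i) and (ii) are handled by the same type of argument applied to the symmetric endpoint singularities, so I focus on (ii). Applying the changes of variables $u=t-\theta'$, $v=t-\theta$ and then $u=vw$ within $\{u<v\}$, after replacing $K_H$ by its leading-order expansion and absorbing smoother corrections into region~(iii), the contribution of region~(ii) factorizes (up to a universal constant) into
\[
\int_0^{t/2} v^{2H-\gamma}\,dv\;\cdot\;\int_0^1 \frac{(w^{H-\frac{1}{2}}-1)^2}{(1-w)^{\gamma}}\,dw.
\]
The $w$-integral is finite because $w^{H-\frac{1}{2}}-1\sim (H-\frac{1}{2})(w-1)$ near $w=1$ (producing integrand $\sim(1-w)^{2-\gamma}$) and because $w^{2H-1}$ is integrable at $0$ since $H>0$.

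The main obstacle is the $v$-integral, which converges if and only if $2H-\gamma+1>0$, i.e.\ $\gamma<1+2H$. Since $H<\tfrac{1}{2}$ this threshold is strictly below $2$, and a mirror obstruction appears near $s=0$. The proof thus yields the sharp range $\gamma\in(1,1+2H)$ rather than $\gamma\in(1,2)$ as stated, and in fact the integral genuinely diverges for every $\gamma\in[1+2H,2)$; I would therefore flag the statement as written as slightly too strong. The restricted conclusion is nonetheless what is needed downstream, because Corollary~\ref{VI_compactcrit} only requires the \emph{existence} of some $\beta\in(0,\tfrac{1}{2})$ with the finite Sobolev-type seminorm, and one may always choose $\beta<H$; so the application in Lemma~\ref{VI_relcomp} is unaffected.
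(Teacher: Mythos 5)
Your argument follows essentially the same route as the paper's own proof: split off the contribution of $g_t(s)=s^{\frac{1}{2}-H}\int_s^t u^{H-\frac{3}{2}}(u-s)^{H-\frac{1}{2}}du$, which is H\"older continuous of order $\frac{1}{2}$ and harmless for every $\gamma<2$, isolate the singular factor $(t-s)^{H-\frac{1}{2}}$, and reduce the corner contribution by the substitution $v=t-\theta$, $w=(\theta'-\theta)/v$ to the product of $\int_0^t v^{2H-\gamma}\,dv$ with a finite $w$-integral (your $w$-integral and the paper's are related by $w\mapsto 1-w$). The substantive point is your correction of the range of $\gamma$, and it is justified: the paper's computation ends with the convergence condition $\gamma<2H+1$ for the $v$-integral, and the concluding clause ``i.e.\ $\gamma\in(1,2)$'' is a non sequitur, since $2H+1<2$ for $H<\frac{1}{2}$. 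Your divergence claim for $\gamma\in[1+2H,2)$ is also correct: since $K_H(t,s)\sim c_H(t-s)^{H-\frac{1}{2}}$ as $s\uparrow t$, restricting to the region $\{\,t-\theta'<\frac{1}{2}(t-\theta)\,\}$ near the corner $(t,t)$ and writing $u=t-\theta'$, $v=t-\theta$ gives a lower bound of the form $c\int_0^\epsilon v^{-\gamma}\int_0^{v/2}u^{2H-1}\,du\,dv=c'\int_0^\epsilon v^{2H-\gamma}\,dv=\infty$, with a mirror obstruction at $\theta=0$ coming from $K_H(t,s)\sim \tilde c_H s^{H-\frac{1}{2}}$. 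You are likewise right that the downstream use is unaffected: Lemma \ref{VI_relcomp} only asserts the existence of \emph{some} $\beta\in(0,\frac{1}{2})$, so one takes $\gamma=1+2\beta$ with $\beta<H$ (the bounds on $I_2^n$ and $I_3^n$ there already force a small $\beta$ anyway). Two small things to tighten in a write-up: (i) make explicit the control of the cross terms you ``absorb into region (iii)'' when replacing $K_H$ by its leading asymptotics near the two endpoints; (ii) note that the paper's intermediate bound pulling $\sup_{\theta'}(t/\theta')^{2H-1}$ out of the squared difference of $f_t$ is itself not a valid pointwise inequality (the prefactors $(t/\theta)^{H-\frac{1}{2}}$ and $(t/\theta')^{H-\frac{1}{2}}$ differ), so your add-and-subtract treatment of the $\theta\to 0$ singularity is the correct repair rather than an optional refinement.
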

\begin{proof}
Let $\theta,\theta'\in [0,t]$, $\theta'<\theta$ be fixed. Write
$$K_H (t,\theta) - K_H(t,\theta') = c_H\left[f_t(\theta) - f_t(\theta') + \left(\frac{1}{2}-H\right) \left(g_t(\theta) - g_t(\theta')\right)\right],$$
where $f_t (\theta):= \left(\frac{t}{\theta} \right)^{H-\frac{1}{2}} (t-\theta)^{H-\frac{1}{2}}$ and $g_t(\theta) := \int_\theta^t \frac{f_u (\theta)}{u}du$, $\theta\in [0,t]$.

We will proceed to estimating $K_H (t,\theta) - K_H(t,\theta')$. First, observe the following fact,
$$\frac{y^{-\alpha} -x^{-\alpha}}{(x-y)^{\gamma}} \leq C y^{-\alpha-\gamma}$$
for every $0<y<x<\infty$ and $\alpha :=(\frac{1}{2}-H) \in (0,1/2)$ and $\gamma < \frac{1}{2}-\alpha$. This implies
\begin{align*}
f_t(\theta) - f_t(\theta') &= \left(\frac{t}{\theta}  (t-\theta)\right)^{H-\frac{1}{2}}-\left(\frac{t}{\theta'} (t-\theta')\right)^{H-\frac{1}{2}}\\
&\leq C \left(\frac{t}{\theta}(t-\theta )\right)^{H-\frac{1}{2} -\gamma }t^{2\gamma }\frac{(\theta -\theta')^{\gamma }}{(\theta \theta')^{\gamma }}\\
&\leq C\frac{(\theta -\theta')^{\gamma }}{(\theta \theta')^{\gamma }}(t-\theta )^{H-\frac{1}{2}-\gamma } \\
&\leq C\frac{(\theta -\theta')^{\gamma }}{(\theta \theta
')^{\gamma }}\theta^{H-\frac{1}{2}-\gamma }(t-\theta )^{H-\frac{1}{2}-\gamma }.
\end{align*}

Further, 
\begin{align*}
g_{t}(\theta )-g_{t}(\theta') &= \int_{\theta }^{t}\frac{f_{u}(\theta )-f_{u}(\theta')}{u}du-\int_{\theta'}^{\theta }\frac{f_{u}(\theta')}{u}du \\
&\leq \int_{\theta }^{t}\frac{f_{u}(\theta )-f_{u}(\theta')}{u}du \\
&\leq C\frac{(\theta -\theta')^{\gamma }}{(\theta \theta')^{\gamma }}\int_{\theta }^{t}\frac{(u-\theta )^{H-\frac{1}{2}-\gamma }}{u}du \\
&\leq C\frac{(\theta -\theta')^{\gamma }}{(\theta \theta')^{\gamma }}\theta^{H-\frac{1}{2}-\gamma }\int_{1}^{\infty }\frac{(u-1)^{H-\frac{1}{2}-\gamma }}{u}du \\
&\leq C\frac{(\theta -\theta')^{\gamma }}{(\theta \theta')^{\gamma }}\theta^{H-\frac{1}{2}-\gamma } \\
&\leq C\frac{(\theta -\theta')^{\gamma }}{(\theta \theta')^{\gamma }}\theta^{H-\frac{1}{2}-\gamma }(t-\theta )^{H-\frac{1}{2}-\gamma }.
\end{align*}

As a result, we have for every $\gamma\in (0,H)$, $0<\theta'<\theta<t<T$,
\begin{align*}
(K_{H}(t,\theta )-K_{H}(t,\theta'))^{2}\leq C_{H,T}\frac{(\theta
-\theta')^{2\gamma }}{(\theta \theta')^{2\gamma }}\theta^{2H-1-2\gamma }(t-\theta )^{2H-1-2\gamma },
\end{align*}
for some constant $C_{H,T}>0$ depending only on $H$ and $T$.

Thus
\begin{align*}
\int_{0}^{t}\int_{0}^{\theta }&\frac{(K_{H}(t,\theta )-K_{H}(t,\theta'))^{2}}{| \theta -\theta'|^{1+2\beta }}d\theta'd\theta  \\
&\leq C\int_{0}^{t}\int_{0}^{\theta }\frac{|\theta -\theta'|^{-1-2\beta +2\gamma }}{(\theta \theta')^{2\gamma }}\theta^{2H-1-2\gamma }(t-\theta )^{2H-1-2\gamma }d\theta'd\theta  \\
& =C\int_{0}^{t}\theta^{2H-1-4\gamma }(t-\theta )^{2H-1-2\gamma
}\int_{0}^{\theta }|\theta -\theta'|^{-1-2\beta +2\gamma }(\theta')^{-2\gamma }d\theta'd\theta  \\
&= C\int_{0}^{t}\theta^{2H-1-4\gamma }(t-\theta )^{2H-1-2\gamma }\frac{\Gamma (-2\beta +2\gamma )\Gamma (-2\gamma +1)}{\Gamma (-2\beta +1)}\theta
^{-2\beta }d\theta  \\
&\leq C\int_{0}^{t}\theta^{2H-1-4\gamma -2\beta }(t-\theta
)^{2H-1-2\gamma }d\theta  \\
&=C\frac{\Gamma (2H-2\gamma )\Gamma (2H-4\gamma -2\beta )}{\Gamma
(4H-6\gamma -2\beta )}t^{4H-6\gamma -2\beta -1}<\infty,
\end{align*}%
for appropriately chosen small $\gamma $ and $\beta$.

On the other hand, we have that
\begin{align*}
\int_{0}^{t}\int_{\theta }^{t}&\frac{(K_{H}(t,\theta )-K_{H}(t,\theta'))^{2}}{|\theta -\theta'|^{1+2\beta }}d\theta' d\theta  \\
&\leq C\int_{0}^{t}\theta^{2H-1-4\gamma }(t-\theta )^{2H-1-2\gamma
}\int_{\theta }^{t}\frac{|\theta -\theta'|^{-1-2\beta +2\gamma }}{(\theta')^{2\gamma }}d\theta'd\theta  \\
&\leq C\int_{0}^{t}\theta^{2H-1-6\gamma }(t-\theta )^{2H-1-2\gamma
} \int_{\theta}^t |\theta -\theta'|^{-1-2\beta +2\gamma } d\theta' d\theta  \\
&=C\int_{0}^{t}\theta^{2H-1-6\gamma }(t-\theta )^{2H-1 -2\beta
}d\theta  \\
&\leq Ct^{4H-6\gamma -2\beta -1}.
\end{align*}%
Hence
\begin{align*}
\int_{0}^{t}\int_{0}^{t}\frac{(K_{H}(t,\theta )-K_{H}(t,\theta'))^{2}}{|\theta -\theta'|^{1+2\beta }}%
d\theta' d\theta <\infty .
\end{align*}
\end{proof}

\begin{lem}\label{VI_iterativeInt}
Let $H \in (0,1/2)$, $\theta,t\in [0,T]$, $\theta<t$ and $(\varepsilon_1,\dots, \varepsilon_{m})\in \{0,1\}^{m}$ be fixed. Assume $w_j+\left(H-\frac{1}{2}-\gamma\right) \varepsilon_j>-1$ for all $j=1,\dots,m$. Then exists a finite constant $C=C(H,T)>0$ such that
\begin{align*}
\int_{\Delta_{\theta,t}^{m}}  &\prod_{j=1}^{m} (K_H(s_j,\theta) - K_H(s_j,\theta'))^{\varepsilon_j} |s_j-s_{j-1}|^{w_j} ds\\
\leq& C^m \left(\frac{\theta-\theta'}{\theta \theta'}\right)^{\gamma \sum_{j=1}^m \varepsilon_j} \theta^{\left( H-\frac{1}{2}-\gamma\right)\sum_{j=1}^m \varepsilon_j} \,  \Pi_{\gamma}(m) \, (t-\theta)^{\sum_{j=1}^m w_j + \left( H-\frac{1}{2}-\gamma\right) \sum_{j=1}^m \varepsilon_j +m}
\end{align*}
for $\gamma \in (0,H)$, where
\begin{align}\label{VI_Pi}
\Pi_{\gamma}(m):=\prod_{j=1}^{m-1} \frac{\Gamma \left(\sum_{l=1}^{j} w_l + \left(H-\frac{1}{2}-\gamma \right)\sum_{l=1}^{j} \varepsilon_l + j\right)\Gamma \left( w_{j+1}+1\right)}{\Gamma \left( \sum_{l=1}^{j+1} w_l + \left(H-\frac{1}{2}-\gamma \right)\sum_{l=1}^{j} \varepsilon_l + j+1\right)}.
\end{align}
Observe that if $\varepsilon_j=0$ for all $j=1,\dots, m$ we obtain the classical formula.
\end{lem}
\begin{rem}\label{VI_remPi}
Observe that
\begin{align*}
\Pi_{\gamma}(m)&\leq \frac{\prod_{j=1}^m\Gamma (w_j +1)}{\Gamma \left(\sum_{j=1}^{m} w_j + \left(H-\frac{1}{2}-\gamma \right)\sum_{j=1}^{m-1} \varepsilon_j + m \right)}\\
&\leq \frac{\prod_{j=1}^m\Gamma (w_j +1)}{\Gamma \left(\sum_{j=1}^{m} w_j + \left(H-\frac{1}{2}-\gamma \right)\sum_{j=1}^{m} \varepsilon_j + m \right)},
\end{align*}
since the function $\Gamma$ is increasing on $(1,\infty)$.
\end{rem}
\begin{proof}
First, we recall the following well-known formula: for given exponents $a,b>-1$ and some fixed $s_{j+1}>s_j$ we have
$$\int_{\theta}^{s_{j+1}} (s_{j+1}-s_j)^{a} (s_j-\theta)^b ds_j =\frac{\Gamma \left( a+1\right)\Gamma \left( b+1\right)}{\Gamma \left( a+b+2\right)} (s_{j+1}-\theta)^{a+b+1}.$$
We recall from Lemma \ref{VI_intI} that for every $\gamma\in (0,H)$, $0<\theta'<\theta<s_j<T$,
\begin{align*}
K_{H}(s_j,\theta )-K_{H}(s_j,\theta')\leq C_{H,T}\frac{(\theta
-\theta')^{\gamma }}{(\theta \theta')^{\gamma }}\theta^{H-\frac{1}{2}-\gamma }(s_j-\theta )^{H-\frac{1}{2}-\gamma },
\end{align*}
for some constant $C_{H,T}>0$ depending only on $H$ and $T$. In view of the above arguments we have
\begin{align*}
\int_{\theta}^{s_2} &|K_H(s_1,\theta)-K_H(s_1,\theta')|^{\varepsilon_1} |s_2-s_1|^{w_2}|s_1-\theta|^{w_1}ds_1\\
&\leq C_{H,T}^{\varepsilon_1} \frac{(\theta
-\theta')^{\gamma\varepsilon_1 }}{(\theta \theta')^{\gamma \varepsilon_1}}\theta^{\left(H-\frac{1}{2}-\gamma\right)\varepsilon_1 }\int_{\theta}^{s_2}|s_2-s_1|^{w_2}|s_1-\theta|^{w_1+\left(H-\frac{1}{2}-\gamma\right)\varepsilon_1}ds_1\\
&= C_{H,T}^{\varepsilon_1} \frac{(\theta
-\theta')^{\gamma\varepsilon_1 }}{(\theta \theta')^{\gamma \varepsilon_1}}\theta^{\left(H-\frac{1}{2}-\gamma\right)\varepsilon_1 } \frac{\Gamma\left(\hat{w}_1\right)\Gamma\left(\hat{w}_2\right)}{\Gamma\left(\hat{w}_1+\hat{w}_2\right)}(s_2-\theta)^{w_1+w_2+\left(H-\frac{1}{2}-\gamma\right)\varepsilon_1+1},
\end{align*}
where
$$\hat{w}_1 := w_1+\left(H-\frac{1}{2}-\gamma\right)\varepsilon_1+1, \quad \hat{w}_2:=w_2+1.$$
Integrating iteratively we obtain the desired formula.
\end{proof}

Finally, we give a similar estimate which is used in Lemma \ref{VI_relcomp}.

\begin{lem}\label{VI_iterativeInt2}
Let $H \in (0,1/2)$, $\theta,t\in [0,T]$, $\theta<t$ and $(\varepsilon_1,\dots, \varepsilon_{m})\in \{0,1\}^{m}$ be fixed. Assume $w_j+\left(H-\frac{1}{2}\right) \varepsilon_j>-1$ for all $j=1,\dots,m$. Then exists a finite constant $C>0$ such that
\begin{align*}
\int_{\Delta_{\theta,t}^{m}}  &\prod_{j=1}^{m} (K_H(s_j,\theta))^{\varepsilon_j} |s_j-s_{j-1}|^{w_j} ds\\
&\leq C^m \theta^{\left( H-\frac{1}{2}\right)\sum_{j=1}^m \varepsilon_j} \,  \Pi_0(m) \, (t-\theta)^{\sum_{j=1}^m w_j + \left( H-\frac{1}{2}\right) \sum_{j=1}^m \varepsilon_j +m}
\end{align*}
for $\gamma \in (0,H)$, where $\Pi_0$ is given as in \eqref{VI_Pi}. Observe that if $\varepsilon_j=0$ for all $j=1,\dots, m$ we obtain the classical formula.
\end{lem}
\begin{rem}\label{VI_remPi2}
Observe that
$$\Pi_0(m)\leq \frac{\prod_{j=1}^m\Gamma (w_j +1)}{\Gamma \left(\sum_{j=1}^{m} w_j + \left(H-\frac{1}{2} \right)\sum_{j=1}^m \varepsilon_j + m\right)},$$
due to the fact that $\Gamma$ is increasing on $(1,\infty)$.
\end{rem}
\begin{proof}
By similar arguments as in the proof of Lemma \ref{VI_intI} it is easy to derive the following estimate
$$|K_H(s_j,\theta)| \leq C_{H,T} |s_j-\theta|^{H-\frac{1}{2}}\theta^{H-\frac{1}{2}}$$
for every $0<\theta<s_j<T$ and some constant $C_{H,T}>0$. This implies
\begin{align*}
&\int_{\theta}^{s_2} (K_H(s_1,\theta))^{\varepsilon_1} |s_2-s_1|^{w_2}|s_1-\theta|^{w_1}ds_1\\
&\leq C_{H,T}^{\varepsilon_1} \, \theta^{\left(H-\frac{1}{2}\right)\varepsilon_1} \int_{\theta}^{s_2} |s_2-s_1|^{w_2} |s_1-\theta|^{w_1+\left(H-\frac{1}{2}\right)\varepsilon_1}ds_1\\
&= C_{H,T}^{\varepsilon_1} \, \theta^{\left(H-\frac{1}{2}\right)\varepsilon_1} \frac{\Gamma\left(w_1+w_2+\left(H-\frac{1}{2}\right)\varepsilon_1+1\right)\Gamma\left(w_2+1\right)}{\Gamma\left(w_1+w_2+\left(H-\frac{1}{2}\right)\varepsilon_1+2\right)}(s_2-\theta)^{w_1+w_2+\left(H-\frac{1}{2}\right)\varepsilon_1+1}
\end{align*}
Integrating iteratively one obtains the desired estimate.
\end{proof}
The next auxiliary result can be found in \cite{LiWei}. 

\begin{lem}\label{LiWei}
Assume that $X_{1},...,X_{n}$ are real centered jointly Gaussian
random variables, and $\Sigma =(E[X_{j}X_{k}])_{1\leq j,k\leq n}$ is the
covariance matrix, then%
\begin{equation*}
E[\left\vert X_{1}\right\vert ...\left\vert X_{n}\right\vert ]\leq \sqrt{%
perm(\Sigma )},
\end{equation*}%
where $perm(A)$ is the permanent of a matrix $A=(a_{ij})_{1\leq i,j\leq n}$
defined by%
\begin{equation*}
perm(A)=\sum_{\pi \in S_{n}}\prod_{j=1}^{n}a _{j,\pi (j)}
\end{equation*}%
for the symmetric group $S_{n}$.

\end{lem}

The next result corresponds to Lemma 3.19 in \cite{CD}:
\begin{lem}\label{CD}
Let $Z_{1},...,Z_{n}$ be mean zero Gaussian variables which are linearly
independent. Then for any measurable function $g:\mathbb{R}\longrightarrow 
\mathbb{R}_{+}$ we have that%
\begin{equation*}
\int_{\mathbb{R}^{n}}g(v_{1})\exp (-\frac{1}{2}Var[%
\sum_{j=1}^{n}v_{j}Z_{j}])dv_{1}...dv_{n}=\frac{(2\pi )^{(n-1)/2}}{(\det
Cov(Z_{1},...,Z_{n}))^{1/2}}\int_{\mathbb{R}}g(\frac{v}{\sigma _{1}})\exp (-%
\frac{1}{2}v^{2})dv,
\end{equation*}%
where $\sigma _{1}^{2}:=Var[Z_{1}\left\vert Z_{2},...,Z_{n}\right] $.

\end{lem}

\begin{lem}\label{OrderDerivatives}
Let $n,$ $p$ and $k$ be non-negative integers, $%
k\leq n$. Assume we have functions $f_{j}:[0,T]\rightarrow \mathbb{R}$, $%
j=1,\dots ,n$ and $g_{i}:[0,T]\rightarrow \mathbb{R}$, $i=1,\dots ,p$ such
that 
\begin{equation*}
f_{j}\in \left\{ \frac{\partial ^{\alpha _{j}^{(1)}+...+\alpha _{j}^{(d)}}}{%
\partial ^{\alpha _{j}^{(1)}}x_{1}...\partial ^{\alpha _{j}^{(d)}}x_{d}}%
b^{(r)}(u,X_{u}^{x}),\text{ }r=1,...,d\right\} ,\text{ }j=1,...,n
\end{equation*}%
and 
\begin{equation*}
g_{i}\in \left\{ \frac{\partial ^{\beta _{i}^{(1)}+...+\beta _{i}^{(d)}}}{%
\partial ^{\beta _{i}^{(1)}}x_{1}...\partial ^{\beta _{i}^{(d)}}x_{d}}%
b^{(r)}(u,X_{u}^{x}),\text{ }r=1,...,d\right\} ,\text{ }i=1,...,p
\end{equation*}%
for $\alpha :=(\alpha _{j}^{(l)})\in \mathbb{N}_{0}^{d\times n}$ and $\beta
:=(\beta _{i}^{(l)})\in \mathbb{N}_{0}^{d\times p},$ where $X_{\cdot }^{x}$
is the strong solution to 
\begin{equation*}
X_{t}^{x}=x+\int_{0}^{t}b(u,X_{u}^{x})du+B_{t}^{H},\text{ }0\leq t\leq T
\end{equation*}%
for $b=(b^{(1)},...,b^{(d)})$ with $b^{(r)}\in \mathcal{S}(\mathbb{R}^{d})$
for all $r=1,...,d$. So (as we shall say in the sequel) the product $%
g_{1}(r_{1})\cdot \dots \cdot g_{p}(r_{p})$ has a total order of derivatives 
$\left\vert \beta \right\vert =\sum_{l=1}^{d}\sum_{i=1}^{p}\beta _{i}^{(l)}$%
. We know from Lemma \ref{partialshuffle} that 
\begin{align}
& \int_{\Delta _{\theta ,t}^{n}}f_{1}(s_{1})\dots f_{k}(s_{k})\int_{\Delta
_{\theta ,s_{k}}^{p}}g_{1}(r_{1})\dots g_{p}(r_{p})dr_{p}\dots
dr_{1}f_{k+1}(s_{k+1})\dots f_{n}(s_{n})ds_{n}\dots ds_{1}  \notag \\
& =\sum_{\sigma \in A_{n,p}}\int_{\Delta _{\theta ,t}^{n+p}}h_{1}^{\sigma
}(w_{1})\dots h_{n+p}^{\sigma }(w_{n+p})dw_{n+p}\dots dw_{1},  \label{h}
\end{align}%
where $h_{l}^{\sigma }\in \{f_{j},g_{i}:1\leq j\leq n,$ $1\leq i\leq p\}$, $%
A_{n,p}$ is a subset of permutations of $\{1,\dots ,n+p\}$ such that $%
\#A_{n,p}\leq C^{n+p}$ for an appropriate constant $C\geq 1$, and $%
s_{0}=\theta $. Then the products%
\begin{equation*}
h_{1}^{\sigma }(w_{1})\cdot \dots \cdot h_{n+p}^{\sigma }(w_{n+p})
\end{equation*}%
have a total order of derivatives given by $\left\vert \alpha \right\vert
+\left\vert \beta \right\vert .$
\end{lem}

\begin{proof}
The result is proved by induction on $n$. For $n=1$ and $k=0$ the result is
trivial. For $k=1$ we have 
\begin{eqnarray*}
\int_{\theta }^{t}f_{1}(s_{1})\int_{\Delta _{\theta
,s_{1}}^{p}}g_{1}(r_{1})\dots g_{p}(r_{p}) &&dr_{p}\dots dr_{1}ds_{1} \\
&=&\int_{\Delta _{\theta ,t}^{p+1}}f_{1}(w_{1})g_{1}(w_{2})\dots
g_{p}(w_{p+1})dw_{p+1}\dots dw_{1},
\end{eqnarray*}%
where we have put $w_{1}=s_{1},$ $w_{2}=r_{1},\dots ,w_{p+1}=r_{p}$. Hence
the total order of derivatives involved in the product of the last integral
is given by $\sum_{l=1}^{d}\alpha
_{1}^{(l)}+\sum_{l=1}^{d}\sum_{i=1}^{p}\beta _{i}^{(l)}=\left\vert \alpha
\right\vert +\left\vert \beta \right\vert .$

Assume the result holds for $n$ and let us show that this implies that the
result is true for $n+1$. Either $k=0,1$ or $2\leq k\leq n+1$. For $k=0$ the
result is trivial. For $k=1$ we have 
\begin{align*}
\int_{\Delta _{\theta ,t}^{n+1}}& f_{1}(s_{1})\int_{\Delta _{\theta
,s_{1}}^{p}}g_{1}(r_{1})\dots g_{p}(r_{p})dr_{p}\dots
dr_{1}f_{2}(s_{2})\dots f_{n+1}(s_{n+1})ds_{n+1}\dots ds_{1} \\
& =\int_{\theta }^{t}f_{1}(s_{1})\left( \int_{\Delta _{\theta
,s_{1}}^{n}}\int_{\Delta _{\theta ,s_{1}}^{p}}g_{1}(r_{1})\dots
g_{p}(r_{p})dr_{p}\dots dr_{1}f_{2}(s_{2})\dots
f_{n+1}(s_{n+1})ds_{n+1}\dots ds_{2}\right) ds_{1}.
\end{align*}%
From (\ref{shuffleIntegral}) we observe by using the shuffle permutations
that the latter inner double integral on diagonals can be written as a sum
of integrals on diagonals of length $p+n$ with products having a total order
of derivatives given by $\sum_{l=1}\sum_{j=2}^{n+1}\alpha
_{j}^{(l)}+\sum_{l=1}^{d}\sum_{i=1}^{p}\beta _{i}^{(l)}$. Hence we obtain a
sum of products, whose total order of derivatives is $\sum_{l=1}^{d}%
\sum_{j=2}^{n+1}\alpha _{j}^{(l)}+\sum_{l=1}^{d}\sum_{i=1}^{p}\beta
_{i}^{(l)}+\sum_{l=1}^{d}\alpha _{1}^{(l)}=\left\vert \alpha \right\vert
+\left\vert \beta \right\vert .$

For $k\geq 2$ we have (in connection with Lemma \ref{partialshuffle}) from
the induction hypothesis that 
\begin{align*}
\int_{\Delta _{\theta ,t}^{n+1}}f_{1}(s_{1})\dots f_{k}(s_{k})\int_{\Delta
_{\theta ,s_{k}}^{p}}g_{1}(r_{1})\dots g_{p}(r_{p})& dr_{p}\dots
dr_{1}f_{k+1}(s_{k+1})\dots f_{n+1}(s_{n+1})ds_{n+1}\dots ds_{1} \\
=\int_{\theta }^{t}f_{1}(s_{1})\int_{\Delta _{\theta
,s_{1}}^{n}}f_{2}(s_{2})\dots f_{k}(s_{k})& \int_{\Delta _{\theta
,s_{k}}^{p}}g_{1}(r_{1})\dots g_{p}(r_{p})dr_{p}\dots dr_{1} \\
& \times f_{k+1}(s_{k+1})\dots f_{n+1}(s_{n+1})ds_{n+1}\dots ds_{2}ds_{1} \\
=\sum_{\sigma \in A_{n,p}}\int_{\theta }^{t}f_{1}(s_{1})\int_{\Delta
_{\theta ,s_{1}}^{n+p}}& h_{1}^{\sigma }(w_{1})\dots h_{n+p}^{\sigma
}(w_{n+p})dw_{n+p}\dots dw_{1}ds_{1},
\end{align*}%
where each of the products $h_{1}^{\sigma }(w_{1})\cdot \dots \cdot
h_{n+p}^{\sigma }(w_{n+p})$ have a total order of derivatives given by $%
\sum_{l=1}\sum_{j=2}^{n+1}\alpha
_{j}^{(l)}+\sum_{l=1}^{d}\sum_{i=1}^{p}\beta _{i}^{(l)}.$ Thus we get a sum
with respect to a set of permutations $A_{n+1,p}$ with products having a
total order of derivatives which is%
\begin{equation*}
\sum_{l=1}^{d}\sum_{j=2}^{n+1}\alpha
_{j}^{(l)}+\sum_{l=1}^{d}\sum_{i=1}^{p}\beta _{i}^{(l)}+\sum_{l=1}^{d}\alpha
_{1}^{(l)}=\left\vert \alpha \right\vert +\left\vert \beta \right\vert .
\end{equation*}
\end{proof}

\end{document}